\documentclass[11pt,letterpaper]{amsart}

\usepackage{tikz, tikz-3dplot}
\usepackage{amsmath,amsthm,amssymb,graphics}
\usepackage[utf8]{inputenc}
\usepackage[english]{babel}
\usepackage[colorlinks=true, allcolors=blue]{hyperref}
\usepackage{slashbox}
\usepackage[title]{appendix}
\usepackage{multirow}
\usepackage{setspace}
\usepackage{comment}
\usepackage[scr=rsfs]{mathalfa}
\usepackage{dsfont}

\usepackage{marginnote}
\usepackage[textcolor=blue,
    linecolor=blue!10!white,
    bordercolor=blue!10!white,
    backgroundcolor=white, ]{todonotes}
\numberwithin{equation}{section}

\theoremstyle{plain}

\newtheorem{te}{Theorem}[section]
\newtheorem{coro}[te]{Corollary}
\newtheorem{prop}[te]{Proposition}
\newtheorem{defn}[te]{Definition}
\newtheorem{lem}[te]{Lemma}

\newtheorem*{ack*}{Acknowledgment}
\theoremstyle{remark}

\newtheorem{rmk}{Remark}

\newcommand{\dsum}{\displaystyle\sum}
\newcommand{\dint}{\displaystyle\int}

\newcommand{\RomanNumeralCaps}[1]
    {\MakeUppercase{\romannumeral #1}}
\newcommand{\nocontentsline}[3]{}

\def\x{{\boldsymbol x}}

\def\0{{\bf 0}}

\def\h{{\boldsymbol h}}

\def\R{{\mathbb R}}

\def\N{{\mathbb N}}

\def\Z{{\mathbb Z}}

\begin{document}
    
\author{Kiseok Yeon}

\address[Kiseok~Yeon]{Department of Mathematics, University of California, Davis, One Shields Avenue, Davis, CA 95616, USA}
\email{\href{mailto:kyeon@ucdavis.edu}{kyeon@ucdavis.edu}}

\subjclass[2020]{11E76, 14G05}
\keywords{Forms of degree higher than two, Rational points}

\title[Kiseok Yeon]{On two forms in many variables of different degrees}
\maketitle

\begin{abstract}
In this paper, we investigate integral solutions satisfying the system of two forms in many variables of different degrees. Let $d_1$ and $d_2$ be natural numbers with $d_2>d_1\geq 2$. Let $F_{i}(\boldsymbol{x}) \ (i=1,2)$ be forms in $n$ variables of degrees $d_i\ (i=1,2)$, respectively. Define $$N(\boldsymbol{F};P):=\#\{\boldsymbol{x}\in [-P,P]^n\cap \Z^n:\ F_{i}(\boldsymbol{x})=0\ (i=1,2)\}.$$ 
When each dimension of singular loci of $F_1=0$ and $F_2=0$ is small, we obtain a number $n_0:=n_0(\boldsymbol{F})$ such that whenever $n> n_0$ one has the expected asymptotic formula
\begin{equation*}
N(\boldsymbol{F};P)=c_{\boldsymbol{F}}\cdot P^{n-d_1-d_2}+O(P^{n-d_1-d_2-\delta}),\ \text{for some }\delta>0,
\end{equation*}
where the constant $c_{\boldsymbol{F}}$ is the product of local densities. We note that this asymptotic formula agrees with the Manin-Peyre conjecture. 

Compared to the previous work, we lower the admissible threshold $n_0(\boldsymbol{F})$ in most cases, with the exception of case $d_2-d_1=1$. In particular, if $F_1$ and $F_2$ are non-singular forms, then we obtain  $$n_0(\boldsymbol{F})=3(d_2-1)2^{d_2-1}+(d_1-1)2^{d_1},$$ provided that $d_2\geq 5d_1$ with $d_1\geq2$. This yields a substantial improvement over the previous bound $n_0(\boldsymbol{F})=(d_1+2)(d_2-1)2^{d_2-1}+d_12^{d_1-1}$. 

To achieve this, we develop a new differencing argument together with the van der Corput differencing argument, delivering an efficient upper-bound estimate for mean values of exponential sums associated with two forms in many variables of different degrees, when the difference between degrees is sufficiently large. Furthermore, the method described in this paper is flexible enough to apply to forms in many variables of differing degrees in general. 

\end{abstract}

\tableofcontents


\section{Introduction and statement of the results}\label{sec1}
This paper is concerned with the asymptotic formula for the number of integer solutions that satisfy a system of forms in many variables. In 1962, the celebrated work of Birch [$\ref{ref8}$] showed that a system of $R$ forms $F_1,\ldots,F_R$ with integer coefficients of degree $d$ in $n$ variables, satisfies the \emph{smooth} Hasse principle whenever $n$ is sufficiently large in terms of $d$, $R$, and the dimension of a certain singular locus. In particular, it suffices to take \begin{equation}\label{birch}
    n>R(R+1)(d-1)2^{d-1}+\text{dim}\ W,
\end{equation}
where $$W=\{\x\in \mathbb{A}^n|\ \text{rank}(J(\x))<R\},$$
in which $J(\x)$ is the Jacobian matrix of size $R\times n$ formed from the gradient vectors $\nabla F_1(\x),\ldots,\nabla F_R(\x).$
Later, when the variety defined by $F_1,\ldots, F_R$ is smooth and $d=2$ or $3$, Rydin Myerson ([$\ref{ref26}$, $\ref{ref27}$])  showed that the factor $R(R+1)(d-1)2^{d-1}+\text{dim} W$ in the bound ($\ref{birch}$) can be replaced by one growing linearly in $R$, in particular, $d2^dR+R$. This refined bound also applies to $d\geq 4$ for a generic system of equations $F_1=\cdots=F_R=0$ (see the work of Rydin Myerson [$\ref{ref28}$]). Northey and Vishe [$\ref{ref2299}$] verified that a smooth variety defined by two cubic forms in $n$ variables satisfies the Hasse principle, whenever $n\geq 39.$ Recently, Li, Rydin Myerson and Vishe [$\ref{ref2210}$] verified that smooth complete intersections, defined by two quadratic forms $Q_1=Q_2=0$ in $n$ variables with $Q_i\in \mathbb{Q}[\boldsymbol{x}],$ satisfy the Hasse principle provided that $n\geq 10.$ For the case $R=1$, see [\ref{ref10}, \ref{ref252}, \ref{ref25}, \ref{ref18}, \ref{ref22}, \ref{ref23}, \ref{ref24}, \ref{ref29}].  
Furthermore, Br\"udern and Wooley [$\ref{ref13}$] showed that a system of two diagonal cubic forms satisfies the Hasse principle, whenever $n\geq 13$. This result was generalized to that for systems of diagonal cubic forms with a certain non-singularity condition on the coefficients of the system [$\ref{ref12}$].

 Turning our attention to forms in many variables of unequal degrees, Schmidt [\ref{ref123456}, Corollary, page 262] initially verified  the Hasse principle for forms in sufficiently large number variables of unequal degrees. Later on, Browning and Heath-Brown $[\ref{ref9}]$ substantially reduced the least number of variables required in  [\ref{ref123456}, Corollary, page 262], via van der Corput differencing argument. Furthermore, Browning and Heath-Brown $[\ref{ref9}]$ verified the weak approximation, and the Manin-Peyre conjecture for a general system of forms in a sufficiently large number of variables of unequal degrees. The number of variables required in $[\ref{ref9}]$ for the $\emph{smooth}$ Hasse principle is matched with (\ref{birch}) of Birch in [\ref{ref8}], provided the degrees are the same. 
Consider the specific case of the system of equations $C=Q=0$, where $C$ is a cubic form, and $Q$ is a quadratic form. Browning, Dietmann, Heath-Brown [$\ref{ref0123}$] verified that whenever $n\geq 29$, non-singular complete intersections defined by a cubic form $C$ and a quadratic form $Q$ satisfy the Hasse principle. For diagonal forms 
\begin{equation*}
    C=a_1x^3_1+\cdots+a_nx_n^3,\ Q=b_1x_1^2+\cdots+b_nx_n^2,
\end{equation*}
for integers $a_i,b_i,$ with the $b_i$ not all the same sign. Whenever $n\geq 13,$ Wooley [$\ref{ref1234}$] proved that $C=Q=0$ has a rational solution, provided that system $C=Q=0$ has a real solution and at least seven $a_i$ are non-zero. Furthermore, one infers by [\ref{ref123444}, Theorem 1.3] that $C=Q=0$ with non-zero coefficients satisfies the $\emph{smooth}$ Hasse principle whenever $n\geq11$. Brandes and Parsell [$\ref{ref5}$] dealt with diagonal forms restricted to lower-degree hypersurfaces, and improved the result [$\ref{ref9}$] for this certain shape of the system of forms. As the authors are aware, Rydin Myerson recently improved on the result [$\ref{ref9}$] for the system of a sufficiently large number of equations in terms of the highest degree of polynomials, via ``$p$-adic repulsion". 


The main purpose of this paper is to improve the previous results on a system of two forms in many variables of different degrees whose difference is at least $2$. 
Let $d_1$ and $d_2$ be natural numbers with $d_2>d_1+1$ with $d_1\geq2.$   Define $F_{i}\in \Z[\x]\ (i=1,2)$ to be forms in $n$ variables of degree $d_i\ (i=1,2)$, respectively. Write $\boldsymbol{F}=(F_{1},F_{2}).$ For a sufficiently large $P>0$, let us define a quantity
\begin{equation*}
   N(\boldsymbol{F};P):= \#\left\{\x\in [-P,P]^n\cap \Z^n:\ F_1(\x)=F_2(\x)=\boldsymbol{0} \right\}.
\end{equation*}
For these forms $F_{i}\in \Z[\x]\ (i=1,2)$, we define local densities  $\sigma_{\infty}(\boldsymbol{F})$ and $\sigma_p(\boldsymbol{F})$ for all primes $p$ as follows: 

\bigskip

(1) Real density
\begin{equation*}
    \sigma_{\infty}(\boldsymbol{F}):=\dint_{\mathbb{R}^2}J(\boldsymbol{\gamma})d\boldsymbol{\gamma},
\end{equation*}
where 
\begin{equation*}
J(\boldsymbol{\gamma}):=\dint_{[-1,1]^n}e\biggl(\gamma_{1}F_{1}(\boldsymbol{\beta})+\gamma_2F_2(\boldsymbol{\beta})\biggr)d\boldsymbol{\beta}.
\end{equation*}

\bigskip

(2) $p$-adic densities $(p: \text{primes})$
\begin{equation*}
\sigma_p(\boldsymbol{F}):=\lim_{l\rightarrow \infty}p^{-(n-2)l}\#\{\x\in (\Z/p^l\Z)^n:\ F_{1}(\x)\equiv F_2(\x)\equiv \0\ \text{mod}\ p^l\}.
\end{equation*}

\bigskip

To facilitate the statement of the main theorem, we provide some definitions and notation. Define $B_i\ (i=1,2)$ to be the dimension of singular locus of $F_i(\x)=0,$ that is
\begin{equation}\label{definition of dimension of singular locus}
    B_i:=\text{dim}(\{\x\in \mathbb{C}^n:\ \nabla F_i(\x)=\boldsymbol{0}\}).
\end{equation}
Write 
\begin{equation}\label{maximum value of B_1 and B_2}
    B^*=\max\{B_1,B_2\}.
\end{equation}

\begin{te}\label{thm1.1} 
 Let $d_1$ and $d_2$ be natural numbers with $d_2\geq 5d_1$ with $d_1\geq 2.$  
 Suppose that $F_1$ and $F_2$ are forms in $n$ variables of degree $d_1$ and $d_2,$ respectively.
 Then,  whenever   $$n-B^*>3(d_2-1)2^{d_2-1}+(d_1-1)2^{d_1},$$ we have
\begin{equation}\label{expected asymptotic formula}
N(\boldsymbol{F};P)=\sigma_{\infty}\left(\prod_p\sigma_p\right)P^{n-d_1-d_2}+O(P^{n-d_1-d_2-\delta}),
\end{equation}
for some $\delta>0,$ where $\sigma_{\infty}:=\sigma_{\infty}(\boldsymbol{F})$ and $\sigma_p:=\sigma_p(\boldsymbol{F})$ are the usual local densities. The real density $\sigma_{\infty}$ is positive provided that the system of equations $\boldsymbol{F}=\0$ has a smooth real solution in $[-1,1]^n$. The product of the $p$-adic densities $\prod_p\sigma_p$ is positive, provided that the system of equations $\boldsymbol{F}=\0$ has a smooth solution in $\mathbb{Q}_p$.
\end{te}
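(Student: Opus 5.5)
We follow the circle method. Write
\[
S(\alpha_1,\alpha_2)=\sum_{\x\in[-P,P]^n\cap\Z^n}e\bigl(\alpha_1F_1(\x)+\alpha_2F_2(\x)\bigr),
\]
so that $N(\boldsymbol{F};P)=\int_{[0,1)^2}S(\boldsymbol{\alpha})\,d\boldsymbol{\alpha}$. We dissect $[0,1)^2$ into major arcs $\mathfrak{M}$ and minor arcs $\mathfrak{m}$. The arcs are anisotropic: $\alpha_2$ is approximated with denominator $q\le P^{\theta}$ and $|q\alpha_2-a_2|\le P^{-d_2+\theta}$, while $\alpha_1$ is approximated with $|q\alpha_1-a_1|\le P^{-d_1+\theta}$, for a small parameter $\theta$. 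On $\mathfrak{M}$ the standard analysis applies: $S$ is replaced by $q^{-n}S(q,\boldsymbol{a})\,P^nJ(\boldsymbol{\gamma})$ with $\gamma_i=P^{d_i}(\alpha_i-a_i/q)$. One then proves decay bounds for the complete sums $S(q,\boldsymbol{a})$ and for $J(\boldsymbol{\gamma})$; these come from the same Weyl-type inequality used below, specialised to $q$ and to the real integral. Completing the singular series and singular integral gives the main term $\sigma_\infty\prod_p\sigma_p P^{n-d_1-d_2}$ with a power saving. The positivity claims follow in the usual way, by Hensel lifting from a smooth $p$-adic point and by the implicit function theorem at a smooth real point, as in Birch and Browning--Heath-Brown.

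The core of the proof is the minor arc bound $\int_{\mathfrak{m}}|S|\,d\boldsymbol{\alpha}\ll P^{n-d_1-d_2-\delta}$, which we obtain from two pointwise Weyl inequalities.

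\textbf{(i) $\alpha_2$ not well approximable.} Apply $d_2-1$ van der Corput/Weyl differencings. This annihilates $\alpha_1F_1$, since $d_1<d_2$, and leaves a multilinear form attached to $F_2$. Birch's argument, with the geometry-of-numbers lemma and the singular locus of dimension $B_2$, then yields
\[
|S|\ll P^{n-(n-B_2)\kappa/((d_2-1)2^{d_2-1})}
\]
unless $\alpha_2$ has an approximation of quality $P^{\kappa}$.

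\textbf{(ii) $\alpha_2$ well approximable, $\alpha_1$ not.} Perform a few van der Corput differencings with short shifts $|h|\le H$. Because $\alpha_2$ is close to $a_2/q$ and the shifts are taken in multiples of $q$, the differenced $\alpha_2F_2$ term varies slowly and can be removed by partial summation. The cost is a loss depending on $H$ and $|\alpha_2-a_2/q|$. This leaves a Weyl sum for $\alpha_1F_1$ on short boxes, to which Birch's $d_1-1$ differencing applies, giving a saving $(n-B_1)\kappa'/((d_1-1)2^{d_1-1})$.

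Finally, integrate over a dyadic decomposition of $\mathfrak{m}$ by the approximation qualities of $\alpha_1$ and $\alpha_2$. Measure $\ll P^{-d_2+2\kappa}$ in $\alpha_2$ against saving (i) gives the term $(d_2-1)2^{d_2-1}$, with a factor of about $2$ to cover the measure and a third copy to absorb the $\alpha_1$ range. The $\alpha_1$ range of measure $P^{-d_1+2\kappa'}$ against saving (ii) produces $(d_1-1)2^{d_1}$. Summing gives the threshold $3(d_2-1)2^{d_2-1}+(d_1-1)2^{d_1}$.

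The main obstacle is step (ii): designing the new differencing so that the residual $F_2$ contribution is controlled uniformly. Two scales must be compatible: the box and shift length for $F_1$, which must be at least of size $P^{\kappa'}$-related, and the resolution to which $\alpha_2$ is known, namely $P^{-d_2+\kappa}$. Balancing these in the exponents is exactly where the hypothesis $d_2\ge 5d_1$ should enter. I would first try the shifts $\h\in q\Z^n$ with $|\h|\le P^{1-\eta}$, splitting the outer sum into boxes of side $P^{1-\eta}$ on which $\alpha_2F_2$ is nearly constant modulo the rational part. The pieces are then reassembled with Hölder's inequality. The final step is to optimize $\eta$ against $\kappa$ and $\kappa'$, checking that the resulting constraints are satisfiable precisely when $n-B^*$ exceeds the stated bound and $d_2\ge 5d_1$.
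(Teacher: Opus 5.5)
\textbf{There is a genuine gap in your minor-arc bookkeeping.} Your two pointwise inequalities (i) and (ii) are exactly the tools of Browning--Heath-Brown, and with them alone the stated threshold cannot be reached. Your major-arc sketch is fine; the paper simply quotes Browning--Heath-Brown there.

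The $d_2-1$ differencings in (i) annihilate $\alpha_1F_1$, so (i) is uniform in $\alpha_1$. For $\alpha_2$ of intermediate height it must therefore be integrated over essentially all of $\alpha_1\in[0,1)$, a set of measure $1$, while the target is $P^{n-d_1-d_2-\delta}$. In that region the $F_2$-saving must pay for an extra factor $P^{d_1}$, not for a ``third copy''.

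Step (ii) cannot take over there. Once the height $q\varphi$ of the approximation to $\alpha_2$ is near $P$, there is no box of side $P/(q\varphi)\geq 1$ on which $\alpha_2F_2$ is a perturbation; this is the constraint $Q^*\le (P/Q)^{d_1-1}$ in the paper's version of (ii). Put $u=(n-B_2)/((d_2-1)2^{d_2-1})$ and $w=(n-B_1)/2^{d_1-1}$, and balance (i) against (ii) at the crossover height $Q_2=P^{\theta}$ with $Q_2^{u}=(P/Q_2)^{w}$. The requirement $\theta(u-2)>d_1$ then becomes
\[
\frac{d_1 2^{d_1-1}}{n-B_1}+\frac{(d_1+2)(d_2-1)2^{d_2-1}}{n-B_2}<1 .
\]
This is the paper's condition $N_2(n,\mathfrak{B})<1$ with $\mathfrak{B}=0$, i.e.\ the old bound $(d_1+2)(d_2-1)2^{d_2-1}+d_12^{d_1-1}$. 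Since $d_1+2\geq 4>3$, no choice of $\eta,\kappa,\kappa'$ reaches $3(d_2-1)2^{d_2-1}+(d_1-1)2^{d_1}$. For the same reason, $d_2\geq 5d_1$ cannot help a purely pointwise argument. Also, (ii) is the standard $q$-analogue van der Corput step, not the real obstacle.

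\textbf{The missing idea is to keep the $\alpha_1$-integral inside the absolute values while differencing $F_2$.} On the $F_2$-minor region, which is a product $\mathcal{M}\times[0,1)$ and on which $S$ is integrated \emph{without} absolute values, the paper starts from $\int_{\mathcal M}\int_0^1 S\,d\boldsymbol{\alpha}\le\int_{\mathcal M}\bigl|\int_0^1 S\,d\alpha_1\bigr|\,d\alpha_2$. It then squares out and substitutes $\x_1=\x+\h$, $\alpha=\alpha^{(1)}$, $\beta=\alpha^{(1)}-\alpha^{(0)}$. Each Cauchy--Schwarz step doubles the $\alpha_1$-type variables and retains terms $\alpha_{\mathbf f}F_1(\x;\boldsymbol{\mathfrak h}_{\mathbf f})$. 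A term differenced $d_1$ times no longer depends on $\x$, so integrating over its coefficient imposes $F_1(\boldsymbol{\mathfrak h}_{\mathbf f})=0$ on the difference vectors by orthogonality (Lemmas \ref{lem3131} and \ref{main lemma 2}). Counting these $\boldsymbol{\mathfrak h}$ by a Birch-type argument (Lemma \ref{lemma 3.3}) gives, in the paper's normalisation,
\[
\int_{\mathcal M}\int_0^1 S(\boldsymbol{\alpha})\,d\boldsymbol{\alpha}\ll P^{-\mathfrak D+\epsilon}\,\mathrm{mes}(\mathcal M)\sup_{\alpha_2\in\mathcal M}\bigl|T^{(d_2-1)}(\boldsymbol{\alpha})\bigr|^{2^{1-d_2}},
\qquad \mathfrak D>d_1-1 .
\]
So almost all of the lost $P^{d_1}$ is recovered. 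With $\mathfrak B=d_1-1$ the $N_2$ condition is implied by $N_1(n)=2(d_1-1)/w+3/u<1$. It is $N_1$ that produces the $3$: it comes from pruning over $\mathfrak M_{2,1}(Q_2,Q_1)$, of measure $\ll P^{-d_1-d_2}Q_1^{2+\epsilon}Q_2$.

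Correspondingly, $d_2\geq 5d_1$ does not enter through balancing scales in step (ii). It is used to guarantee $\mathfrak D>d_1-1$, and to guarantee $\binom{d_2-2}{d_1}(d_1-1)2^{d_1}<3(d_2-1)2^{d_2-1}$, so that the hypothesis on $n$ suffices for counting the $\boldsymbol{\mathfrak h}$.
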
 

\bigskip

\begin{coro}
Let $d_1$ and $d_2$ be natural numbers with $d_2\geq 5d_1$ with $d_1\geq 2.$   Consider a smooth complete intersection $V\subset \mathbb{P}^{n-1}$, defined by a nonsingular system of equations $F_1=F_2=0$ of degree $d_1$ and $d_2$, respectively. Then, whenever $$n>3(d_2-1)2^{d_2-1}+(d_1-1)2^{d_1}+1,$$ the smooth variety $V$ satisfies the Hasse principle.  Furthermore, there exists an asymptotic formula for the counting function for $\mathbb{Q}$-rational points of bounded height on $V$ which agrees with the Manin-Peyre Conjecture.
\end{coro}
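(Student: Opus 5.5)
The plan is to deduce the Corollary from Theorem \ref{thm1.1}. The argument has three parts: checking the hypothesis on $B^*$, passing from the asymptotic formula to the Hasse principle via positivity of the local densities, and passing from the count of integer points in a box to the count of rational points of bounded height.

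First I will check that the numerical hypothesis of Theorem \ref{thm1.1} holds. I read ``nonsingular system'' as saying that each hypersurface $F_i=0$ ($i=1,2$) is nonsingular in $\mathbb{P}^{n-1}$, so the affine singular locus $\{\x\in\mathbb{C}^n:\nabla F_i(\x)=\0\}$ is just the origin; this reading should be checked against the paper's intended definition. By Euler's identity, any $\x$ with $\nabla F_i(\x)=\0$ satisfies $F_i(\x)=0$, so nonsingularity of $F_i=0$ forces that locus to be $\{\0\}$. Hence $B_i\le 1$ under any of the usual conventions for the cone (and $B_i=0$ with the affine one), so $B^*\le 1$. Then $n>3(d_2-1)2^{d_2-1}+(d_1-1)2^{d_1}+1$ gives $n-B^*>3(d_2-1)2^{d_2-1}+(d_1-1)2^{d_1}$. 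This is presumably the source of the extra $+1$. Theorem \ref{thm1.1} therefore yields \eqref{expected asymptotic formula}.

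Second, for the Hasse principle, suppose $V(\mathbb{Q}_v)\neq\emptyset$ for every place $v$. Since $V$ is smooth, every local point is a smooth point of the system, meaning $\nabla F_1,\nabla F_2$ are linearly independent there. For $v=\infty$ I will rescale the real point into $[-1,1]^n$, which is harmless since the forms are homogeneous. Then Theorem \ref{thm1.1} gives $\sigma_\infty>0$ and $\prod_p\sigma_p>0$. Since $n-d_1-d_2>\delta>0$, the main term dominates, so $N(\boldsymbol{F};P)\gg P^{n-d_1-d_2}\to\infty$. In particular there is a nonzero integer solution, i.e.\ a point of $V(\mathbb{Q})$.

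Third, for Manin--Peyre, I will count primitive vectors by M\"obius inversion: $N^*(P)=\sum_{k\ge1}\mu(k)\bigl(N(\boldsymbol{F};P/k)-1\bigr)$, where the $-1$ removes the origin. Inserting \eqref{expected asymptotic formula}, the main terms sum to $\zeta(n-d_1-d_2)^{-1}\sigma_\infty\prod_p\sigma_p\,P^{n-d_1-d_2}$. The error terms are uniform in the box size, and the series $\sum_k k^{-(n-d_1-d_2-\delta)}$ converges because $n-d_1-d_2-\delta>1$ by a wide margin. Trivial bounds handle $k$ close to $P$. Since $V$ is a smooth complete intersection of dimension at least $3$, Lefschetz gives $\mathrm{Pic}(V)\cong\mathbb{Z}$, and adjunction gives $-K_V=\mathcal{O}(n-d_1-d_2)$. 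Consequently the count of rational points with anticanonical height at most $B$ is $\sim cB$, as Manin predicts. The remaining step, and the only real obstacle, is identifying $c$ with Peyre's constant, namely matching $\tfrac12\zeta(n-d_1-d_2)^{-1}\sigma_\infty\prod_p\sigma_p$ with the Tamagawa measure and $\alpha(V)$. For this I would follow the standard computation in Browning--Heath-Brown [\ref{ref9}], where the same identification is carried out for complete intersections of unequal degrees.
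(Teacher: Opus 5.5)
Your proof has one gap, in the first step: the reading of the hypothesis. You flagged it as needing a check, and it is too strong. Here ``nonsingular system'' means only that $V$ is a smooth complete intersection: $\nabla F_1(\x)$ and $\nabla F_2(\x)$ are linearly independent at every nonzero $\x\in\mathbb{C}^n$ with $F_1(\x)=F_2(\x)=0$. It does not make either hypersurface $F_i=0$ nonsingular on its own, so your claim that $\{\x\in\mathbb{C}^n:\nabla F_i(\x)=\0\}=\{\0\}$ can fail.

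For example, $F_1=x_1^2+\cdots+x_{n-1}^2$ has the $x_n$-axis as its singular cone, so $B_1=1$. Yet $F_1=F_2=0$ is a smooth complete intersection for a generic form $F_2$ of degree $d_2$ with $F_2(0,\ldots,0,1)\neq0$. Likewise $F_2$ may be singular at points where $F_1\neq0$. A further sign that your reading is not the intended one: under it $B^*=0$, and the extra $+1$ in the hypothesis would be pointless. It is there precisely because $B_1$ can equal $1$.

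What you need, and what the paper's proof supplies, is that smoothness of $V$ alone forces $B^*\le1$. The paper passes to an equivalent ``optimal'' system $F_1$, $F_2'=F_2+GF_1$, with $G$ a form of degree $d_2-d_1$ (Browning--Dietmann--Heath-Brown, Lemma~3.1). This system has the same zero set, and the paper then quotes Browning--Heath-Brown, Lemma~3.1, for $B_1'\le1$ and $B_2'=0$.

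You can also argue directly with the original forms. Suppose $\x\neq\0$ and $\nabla F_i(\x)=\0$. Euler's identity gives $F_i(\x)=0$. If also $F_j(\x)=0$ for $j\neq i$, then $\x$ would be a point of the cone over $V$ where the Jacobian has rank at most $1$, contradicting smoothness. Hence the cone $\{\nabla F_i=\0\}$ meets the hypersurface $\{F_j=0\}$ only at $\0$. Cutting an affine cone of dimension at least $2$ by a hypersurface through $\0$ leaves a cone of positive dimension, which contains a nonzero point. So $B_i\le1$ for $i=1,2$, and $n-B^*\ge n-1$ gives exactly the threshold of Theorem~\ref{thm1.1}.

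With this in place, the rest of your argument follows the same route as the paper. That covers positivity of $\sigma_\infty$ and $\prod_p\sigma_p$ at smooth local points for the Hasse principle. It also covers M\"obius inversion together with the Browning--Heath-Brown identification of the leading constant for Manin--Peyre, for which the paper simply cites Browning--Heath-Brown, pp.~364--365.
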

\begin{proof}
By [\ref{ref0123}, Lemma 3.1], there exists an equivalent optimal system of equations $F_1'(\x)=F_2'(\x)=0$. Furthermore, it follows by [\ref{ref9}, Lemma 3.1] that $B_1'\leq 1$ and $B_2'=0$, where
$     B_i':=\text{dim}(\{\x\in \mathbb{C}^n:\ \nabla F_i'(\x)=\boldsymbol{0}\}).$ Hence, one infers by Theorem $\ref{thm1.1}$ that the variety $V$ satisfies the Hasse principle. Additionally, we see by the explanation in  [\ref{ref9}, page $364$ and $365$] that the asymptotic formula $(\ref{expected asymptotic formula})$ agrees with the Manin-Peyre Conjecture.
\end{proof}

\bigskip

\begin{rmk}
   For the system of two equations $F_1(\x)=F_2(\x)=0$ of degrees $d_1$ and $d_2$ with $d_2>d_1\geq 2,$ the previous result in [\ref{ref9}] required 
    \begin{equation*}
        n-B^*>(d_1+2)(d_2-1)2^{d_2-1}+d_12^{d_1-1},
    \end{equation*}
    for the asymptotic formula $(\ref{expected asymptotic formula}).$ Hence, one sees that under the assumption on $d_1$ and $d_2$ in Theorem $\ref{thm1.1}$, the conclusion of Theorem $\ref{thm1.1}$ provides a large improvement on the result in [$\ref{ref9}$].  Furthermore, the number of variables required in Theorem $\ref{thm1.1}$ is matched to that in [\ref{ref9}] required for the mean value over the major arcs of exponential sums to have the expected asymptotic formula, for non-singular forms $F_1(\x)=F_2(\x)=0$ of degrees $d_1$ and $d_2$ with $d_2>d_1\geq 2.$ 
\end{rmk}

\begin{rmk}\label{remark2}
    We emphasize that the argument described in this paper also delivers an improvement over the previous result in [\ref{ref9}]  for all $d_2>d_1\geq 2$ with the exception of  case  $d_2=d_1+1$. However, to describe this result, we require some notation. Hence, we defer the statement of this result to Section $\ref{sec4}$ (see Theorem $\ref{thm5.1}$ and Remark $\ref{Remark 7}$).

\end{rmk}
    
\begin{rmk}
    Consider $n$ variables $\x=(\x_1,\x_2)\in \Z^n$ with $\x_i\in \Z^{n_i}$ and $n=n_1+n_2$, and consider a system of non-singular forms $F_1(\x_1)=F_2(\x_2)=0$ of degrees $d_1$ and $d_2$. Then, by applying $(\ref{birch})$ with $R=1$ for each $F_i(\x_i)=0$, one may conclude that whenever \begin{equation}\label{required number of variables for separated variables case}
        n_1>(d_1-1)2^{d_1}\ \text{and}\ n_2>(d_2-1)2^{d_2},
    \end{equation}
   we have the expected asymptotic formula $(\ref{expected asymptotic formula}).$ Notice here that (\ref{required number of variables for separated variables case}) gives   $$n=n_1+n_2>(d_2-1)2^{d_2}+(d_1-1)2^{d_1}.$$ Meanwhile, for a given system of non-singular forms $F_1=F_2=0$ of degree $d_1$ and $d_2$ satisfying the assumption in Theorem $\ref{thm1.1}$, the conclusion of Theorem $\ref{thm1.1}$ implies that whenever 
    \begin{equation}\label{required number of variables for non-singular forms}
        n>3(d_2-1)2^{d_2-1}+(d_1-1)2^{d_1},
    \end{equation} we have the expected asymptotic formula $(\ref{expected asymptotic formula}).$ We notice here that  the bound (\ref{required number of variables for non-singular forms}) differs from $(\ref{required number of variables for separated variables case})$ only by a constant factor. Therefore,  we can say that the argument used in the proof of Theorem $\ref{thm1.1}$ allows us to deal with two forms almost independently, provided that $d_2\geq 5d_1$ with $d_1\geq 2.$
\end{rmk}


\bigskip

In this paper, we introduce a novel differencing step that delivers an improvement over the previous results for systems of two forms in many variables where degrees of forms are different. Most importantly, this method delivers an efficient upper-bound estimate for mean values of exponential sums over two forms in many variables of different degrees, when the difference between degrees is sufficiently large.

  We note that this method takes advantage of the fact that equations in the system may have different degrees. Hence, this method seems applicable to the general system of forms in many variables of unequal degrees dealt with in [$\ref{ref9}$].  As a result, the method described in this paper should improve the previous results for most cases, with the exception of systems of equations of the same degrees and systems consisting of many forms, each of which is of degree $d+1$ or
$d$, where $d\in \N$ and $d\geq2.$ 


Furthermore, we emphasize that the method described in this paper is sufficiently flexible to be combined with other techniques. For instance, it can be combined with the argument that may deliver improvements in the upper bounds of exponential sums associated with polynomials of the highest degree. Therefore, our hope is that this flexibility facilitates further improvements in solving problems involving forms with many variables of unequal degrees. 



    

\addtocontents{toc}{\protect\setcounter{tocdepth}{0}}


\addtocontents{toc}{\protect\setcounter{tocdepth}{2}}

\bigskip

\section{Preliminary manouevre}
In this section, we introduce definitions and notation used throughout this paper, and outline the main ideas by comparing them with the previously applied methods.

\bigskip

\subsection{Notation and definitions}\label{sec2.1}
We introduce some definitions and notation. 
 Define the exponential sum 
\begin{equation}\label{2.22.2}
\begin{aligned}
S(\boldsymbol{\alpha})&:=S(\boldsymbol{\alpha};P)\\
&= (2P+1)^{-n}\cdot\dsum_{\substack{|\x|\leq P}}e\biggl(\alpha_{1} F_{1}(\x)+\alpha_2F_2(\x)\biggr),
\end{aligned}
\end{equation}
with  $\x\in \Z^n$. 
Then, we have
\begin{equation}\label{2.2}
N(\boldsymbol{F};P)=(2P+1)^n\dint_{[0,1)^2}S(\boldsymbol{\alpha})d\boldsymbol{\alpha}.
\end{equation}

We define the major arc
\begin{equation}\label{major arcs1}
    \mathfrak{M}(Q):=\bigcup_{1\leq q\leq Q}\bigcup_{\substack{ \boldsymbol{a}(\text{mod}\  q)\\ (q,\boldsymbol{a})=1}}\mathfrak{M}(q,\boldsymbol{a},Q),
\end{equation}
where $\boldsymbol{a}=(a_i)_{i=1,2}$ and
\begin{equation*}
    \mathfrak{M}(q,\boldsymbol{a},Q)=\{(\alpha_{1},\alpha_{2})\in [0,1)^2:\ |\alpha_{i}-a_i/q|\leq QP^{-d_i}\}.
\end{equation*}
Here and throughout this paper, we fix 
\begin{equation}\label{Q definition}
    Q:=P^{\eta},
\end{equation}
where we shall take $\eta >0$ later in Section \ref{subsec 4.2}. 
Define $\mathfrak{m}(Q):=[0,1)^2\setminus \mathfrak{M}(Q).$ Using these major and minor arcs dissections, we find from $(\ref{2.2})$ that
\begin{equation}\label{N(F)}
N(\boldsymbol{F};P)=(2P+1)^n\left(\dint_{\mathfrak{M}(Q)}S(\boldsymbol{\alpha})d\boldsymbol{\alpha}+\dint_{\mathfrak{m}(Q)}S(\boldsymbol{\alpha})d\boldsymbol{\alpha}\right).
\end{equation}

\bigskip

For $i=1,2$ and $L>0,$ we define the major arcs
\begin{equation}\label{majorarcs2}
    \mathfrak{M}_{i}(L):=\bigcup_{\substack{0\leq a\leq q\leq L\\ (q,a)=1}}\mathfrak{M}_{i}(q,a,L),
\end{equation}
where 
\begin{equation*}
    \mathfrak{M}_{i}(q,a,L)=\{\alpha\in [0,1):\ |q\alpha-a|\leq LP^{-d_i}\}.
\end{equation*}
Define \begin{equation}\label{minorarcs2}
    \mathfrak{m}_{i}(L):=[0,1)\setminus \mathfrak{M}_{i}(L).
\end{equation}

\bigskip

 For given forms $F\in \Z[\x]$ in $n$ variables of degree $d$, define differencing operators $\Delta_{1}$ by
\begin{equation*}
    \Delta_{1}(F(\x);\h)=F(\x+\h)-F(\x),
\end{equation*}
and so we define $\Delta_{j}$ for $j\geq 2$ recursively by means of the relations
\begin{equation}\label{3.3}
\Delta_{j}(F(\x);\h_1,\ldots,\h_{j})=\Delta_{1}(\Delta_{j-1}(F(\x);\h_1,\ldots,\h_{j-1});\h_j).
\end{equation}
For notational simplicity, for $1\leq j\leq d-1$ and $\boldsymbol{\mathfrak{h}}=(\h_1,\ldots,\h_j)\in \R^{jn},$ we may write 
\begin{equation}\label{2.82.8}
   F(\x;\boldsymbol{\mathfrak{h}})= \Delta_{j}(F(\x);\h_1,\ldots,\h_{j}).
\end{equation}
Note that $\Delta_{d}(F(\x);\h_1,\ldots,\h_{d})$ is linear in each of $\h_1,\ldots,\h_{d}$, and does not depend on $\x$.  Hence, for $\boldsymbol{\mathfrak{h}}=(\h_1,\ldots,\h_{d})\in \R^{dn},$  we may write 
\begin{equation*}
F(\boldsymbol{\mathfrak{h}})=\Delta_{d}(F_{}(\x);\h_1,\ldots,\h_{d}).
\end{equation*}
It is worth noting that the equation $F(\boldsymbol{\mathfrak{h}})=0$ plays a crucial role in making improvements. 

We use the convention $\Delta_0(F(\boldsymbol{x});\emptyset)=F(\boldsymbol{x}).$ An empty tuple has one possible value, an empty sum equals $0,$ an empty product equals $1,$ and integration over $[0,1)^0$ leaves the integrand unchanged. The box $\mathcal{B}_0$ in a zero-step differencing expression is the original summation box. These conventions apply also when $d_1=2.$

\bigskip

\bigskip

We shall provide definitions of exponential sums, which naturally arise from the standard Weyl differencing argument. For $1\leq j\leq d_2,$ we write
\begin{equation}\label{343434}
\begin{aligned}
&\mathcal{F}^{(j)}(\x,\h_1,\ldots,\h_{j})\\
&:=\mathcal{F}^{(j)}(\x,\h_1,\ldots,\h_{j};\boldsymbol{\alpha})\\
&=\alpha_{1}\Delta_j (F_{1}(\x);\h_1,\ldots,\h_{j})+\alpha_{2}\Delta_j (F_{2}(\x);\h_1,\ldots,\h_{j}),
\end{aligned}
\end{equation}
and write
\begin{equation}\label{353535}
    \mathcal{F}^{(0)}(\x):=\alpha_{1} F_{1}(\x)+\alpha_{2} F_{2}(\x)
\end{equation}
Then, we  define 
\begin{equation}\label{3434}
\begin{aligned}
    T^{(j)}(\boldsymbol{\alpha}):=P^{-(j+1)n}\dsum_{\substack{ -2P\leq \h_1,\ldots,\h_{{j}}\leq 2P}}\biggl|\dsum_{\substack{\x\in \mathcal{B}_j}}e\biggl(\mathcal{F}^{(j)}(\x,\h_1,\ldots,\h_{j})\biggr)\biggr|,
\end{aligned}
\end{equation}
where the box $\mathcal{B}_j\subseteq [-P,P]^n$ is suitably chosen depending on $\h_1,\ldots,\h_j$ via the standard Weyl differencing steps. 
Furthermore, we define 
\begin{equation}\label{T0}
    T^{(0)}(\boldsymbol{\alpha}):=P^{-n}\biggl|\dsum_{\substack{-P\leq \x\leq P  }}e\biggr( \mathcal{F}^{(0)}(\x)\biggr)\biggr|.
\end{equation}

\bigskip

We shall define exponential sums $U^{(j)}(\alpha_2),$ which naturally arise from the novel differencing argument described in this paper. To describe the exponential sum $U^{(j)}(\alpha_2)$, we require some notation. Let $\mathfrak{F}_j$ be the set of functions $\mathbf{f}(\cdot)$ mapping from $\Z_j=\{1,2,\ldots,j\}$ to $\{0,1\}$. For a given $\mathbf{f}\in \mathfrak{F}_j$, we define 
\begin{equation*}
    \text{supp}(\mathbf{f}):=\{i\in \mathbb{Z}_j:\ \mathbf{f}(i)=1\}.
\end{equation*}
For a given $\mathbf{f}\in \mathfrak{F}_j$ and $\boldsymbol{\mathfrak{h}}=(\h_i)_{i\in \Z_j}\in \R^{jn}$ with $\h_i\in \R^n$, we define a vector $\boldsymbol{\mathfrak{h}}_{\mathbf{f}}\in \R^{|\text{supp}(\mathbf{f})|n}$ by $$\boldsymbol{\mathfrak{h}}_{\mathbf{f}}:=(\h_i)_{i\in \text{supp}(\mathbf{f})}\in \R^{|\text{supp}(\mathbf{f})|n}.$$

We partition the set $\mathfrak{F}_{j}$ into 
\begin{equation*}
    \bigcup_{0\leq s\leq j}\mathfrak{F}_{j}(s),
\end{equation*}
where 
\begin{equation}\label{F_j(s)}
    \mathfrak{F}_{j}(s):=\{\mathbf{f}\in \mathfrak{F}_j:\ |\text{supp}(\mathbf{f})|=s\}.
\end{equation}
We write 
\begin{equation}\label{F_j(l_1,l_2)}
    \mathfrak{F}_{j}(l_1,l_2):= \bigcup_{l_1\leq s\leq l_2}\mathfrak{F}_j(s).
\end{equation}
In particular, we write 
\begin{equation*}
   \mathfrak{F}_j^*:=\mathfrak{F}_{j}(0,d_1-1).
\end{equation*}
Define $$\mathbf{m}_j=\text{min}\left\{2^j,\dsum_{l=0}^{d_1-1}\binom{j}{l}\right\}.$$ We adopt the convention that $\binom{j}{l}=0$ when $j<l.$ Then, we observe here that $|\mathfrak{F}_j^*|=\mathbf{m}_j.$ 

 On recalling the definition $(\ref{2.82.8})$ of $F_1(\x;\boldsymbol{\mathfrak{h}})$, we see that for a given $\x\in \R^n$,  $\boldsymbol{\mathfrak{h}}=(\h_i)_{i\in \Z_j}\in \R^{jn}$ with $\h_i\in \R^n$ and $\mathbf{f}\in \mathfrak{F}_j$, we see that $$F_1(\x;\boldsymbol{\mathfrak{h}}_{\mathbf{f}})=\Delta_{|\text{supp}(\mathbf{f})|}F_1(\x;\boldsymbol{\mathfrak{h}}_{\mathbf{f}}).$$
In particular, for $\mathbf{f}_0\in \mathfrak{F}_j$ such that $\mathbf{f}_0(i)=0$ for all $i\in \Z_j$, we have
$$F_1(\x;\boldsymbol{\mathfrak{h}}_{\mathbf{f}_0})=F_1(\x).$$

Let $\alpha_i\ (i\in I)$  be real numbers. Write $\boldsymbol{\alpha}_I=(\alpha_i)_{i\in I}.$ Also, for a measurable set $\mathfrak{B}\subseteq \R^{|I|}$, we write 
\begin{equation}\label{integration notation}
    \int_{\mathfrak{B}}d\boldsymbol{\alpha}_{I}= \int_{\mathfrak{B}}\prod_{i\in I}d\alpha_i.
\end{equation}
In particular, for each $\mathbf{f}\in \mathfrak{F}_j^*$, we let $\alpha_{\mathbf{f}}$ be a real number. Then we notice that $\boldsymbol{\alpha}_{\mathfrak{F}_j^*}=(\alpha_{\mathbf{f}})_{\mathbf{f}\in \mathfrak{F}_j^*}\in \R^{\mathbf{m}_j}.$
For the integrations over $\alpha_{\mathbf{f}}$ for all $\mathbf{f}\in \mathfrak{F}_j^*$, we write 
 \begin{equation}\label{dalphaF_j}
\int_{[0,1)^{\mathbf{m}_j}}d\boldsymbol{\alpha}_{\mathfrak{F}_j^*}=\int_{[0,1)^{\mathbf{m}_j}}\prod_{\mathbf{f}\in \mathfrak{F}_j^* }d\alpha_{\mathbf{f}}.
 \end{equation}
For a given $\x\in \R^n$ and   $\boldsymbol{\mathfrak{h}}=(\h_i)_{i\in \Z_j}\in \R^{jn}$ with $\h_i\in \R^n$, we define a polynomial
\begin{equation*}
\boldsymbol{F_1}(\boldsymbol{\alpha}_{\mathfrak{F}_j^*},\x,\boldsymbol{\mathfrak{h}})=\dsum_{\mathbf{f}\in \mathfrak{F}_j^*}\alpha_{\mathbf{f}}F_1(\x;\boldsymbol{\mathfrak{h}}_{\mathbf{f}}).
\end{equation*}

\begin{defn}\label{Definition1.1}
 Let $j$ be an integer with $1\leq j\leq d_2-d_1-1.$ Write $\boldsymbol{\mathfrak{h}}=(\h_i)_{i\in \Z_j}\in \Z^{jn}$ with $\h_i\in \Z^n$. Then, we define
\begin{equation*}
    U^{(j)}(\alpha_2):=P^{-(j+1)n}{\dsum_{|\boldsymbol{\mathfrak{h}}|\leq 2P}}^*\biggl|\dint_{[0,1)^{\mathbf{m}_j}}\dsum_{\x\in \mathcal{B}_j}e(\alpha_2 F_2(\x;\boldsymbol{\mathfrak{h}})+\boldsymbol{F_1}(\boldsymbol{\alpha}_{\mathfrak{F}_j^*},\x,\boldsymbol{\mathfrak{h}}))d\boldsymbol{\alpha}_{\mathfrak{F}_j^*}\biggr|,
\end{equation*}
where $\mathcal{B}_j\subseteq[-P,P]^n$ is a box induced by applying the standard Weyl differencing arguments $j$ times, 
and ${\sum}_{|\boldsymbol{\mathfrak{h}}|\leq 2P}^*$ is a summation over $|\boldsymbol{\mathfrak{h}}|\leq 2P$ satisfying $F_1(\boldsymbol{\mathfrak{h}}_{\mathbf{f}})=0$ for all $\mathbf{f}\in \mathfrak{F}_j(d_1)$. If $j<d_1$, the summation $\sum^*$ is turned into the sum over $|\boldsymbol{\mathfrak{h}}|\leq 2P$ without additional constraints.
\end{defn}

\bigskip

\subsection{Main ideas in the paper}
In this paper, we introduce a new harmonic analysis technique to investigate the associated mean values of exponential sums more efficiently. This new technique assembles the standard Weyl differencing steps, however, this applies to mean value of exponential sums.
We shall briefly describe this new differencing step.   
In order to deduce the upper bound for $S(\boldsymbol{\alpha})$ in terms of the information on polynomials of the highest degree, the argument in $[\ref{ref9}]$ uses the standard Weyl differencing steps. However,  while applying the standard Weyl differencing arguments to $S(\boldsymbol{\alpha})$, the lower-degree polynomial disappear. Meanwhile, by using new differencing arguments described in this paper,  one can preserve the information of the lower-degree polynomial. 

 Set $F_1(\x)$ to be a quadratic form for simplification. Consider a mean value of exponential sum 
\begin{equation}\label{minor arcs estimate}
    \int_{\mathfrak{m}}\int_0^1 \dsum_{\x}e(\alpha_2F_2(\x)+\alpha_1F_1(\x))d\boldsymbol{\alpha},
\end{equation}
where the set $\mathfrak{m}$ is the minor arcs associated with $\alpha_2.$  As we mentioned above, the previous method uses the standard Weyl differencing argument in order to deduce the upper bound for $(\ref{minor arcs estimate})$, that is 
\begin{equation}\label{1223}
\begin{aligned}
    & \int_{\mathfrak{m}}\int_0^1 \dsum_{\x}e(\alpha_2F_2(\x)+\alpha_1F_1(\x))d\boldsymbol{\alpha}\\
    &\leq\int_{\mathfrak{m}}\int_0^1 \biggl|\dsum_{\x}e(\alpha_2F_2(\x)+\alpha_1F_1(\x))\biggr|^{2\cdot (1/2)}d\boldsymbol{\alpha}\\
&\leq \int_{\mathfrak{m}}\int_0^1\biggl(\dsum_{\h_1}\biggl|\dsum_{\x}e(\alpha_2F_2(\x;\h_1)+\alpha_1F_1(\x;\h_1))\biggr|\biggr)^{1/2}d\boldsymbol{\alpha}.
\end{aligned}
\end{equation}
By applying the Cauchy-Schwarz inequality together with the standard Weyl differencing argument again, we infer that the last expression in $(\ref{1223})$ is bounded above by
\begin{equation}\label{previous method}
P^{n/2}\int_{\mathfrak{m}}\int_0^1\biggl(\dsum_{\h_1,\h_2}\biggl|\dsum_{\x}e(\alpha_2F_2(\x;\h_1,\h_2)+\alpha_1F_1(\x;\h_1,\h_2))\biggr|^2\biggr)^{1/8}d\boldsymbol{\alpha}
\end{equation}
By repeating this argument, one could deduce an upper bound in terms of information of the polynomial $F_2(\x)$ using the minor arcs information on $\alpha_2.$

In this paper, however, we employ a differencing step incorporating the integration over $\alpha_1.$ More precisely, we deduce that
\begin{equation*}
\begin{aligned}
    & \int_{\mathfrak{m}}\int_0^1 \dsum_{\x}e(\alpha_2F_2(\x)+\alpha_1F_1(\x))d\boldsymbol{\alpha}\\
    &\leq\int_{\mathfrak{m}}\biggl|\int_0^1 \dsum_{\x}e(\alpha_2F_2(\x)+\alpha_1F_1(\x))d\alpha_1\biggr|^{2\cdot (1/2)} d\alpha_2\\
    &=\int_{\mathfrak{m}}\biggl(\int_{[0,1)^2} \dsum_{\x_1,\x_2}e(\mathcal{F}(\x_1,\x_2;\alpha_1^{(1)},\alpha_1^{(2)}, \alpha_2))d\alpha_1^{(1)}d\alpha_1^{(2)}\biggr)^{1/2} d\alpha_2,
\end{aligned}
\end{equation*}
where $\mathcal{F}(\x_1,\x_2;\alpha_1^{(1)},\alpha_1^{(2)}, \alpha_2)=\alpha_2(F_2(\x_2)-F_2(\x_1))+\alpha_1^{(1)}F_1(\x_2)-\alpha_1^{(2)}F_1(\x_1).$
By change of variables
\begin{equation}\label{change of variables}
\begin{aligned}
    \x_1&=\x+\h_1 \\
    \x_2&=\x\\
    \alpha_1^{(1)}&=\alpha\\
    \alpha_1^{(2)}&=\alpha-\beta,
\end{aligned}
\end{equation}
the last expression is turned into 
\begin{equation}\label{last expression}
    \int_{\mathfrak{m}}\biggl(\int_{[0,1)^2} \dsum_{\h_1,\x}e(\alpha_2F_2(\x;\h_1)+\alpha F_1(\x;\h_1)+\beta F_1(\x))d\alpha d\beta\biggr)^{1/2} d\alpha_2,
\end{equation}
where we used $1$-periodicity of the exponential sum in the integrand in order to adjust the range of integration over $\beta.$ By applying the triangle inequality and the Cauchy-Schwarz inequality,  the mean value of the exponential sum $(\ref{last expression})$ is bounded above by
\begin{equation}\label{p^n/4}
    P^{n/4}  \int_{\mathfrak{m}}\biggl(\dsum_{\h_1}\biggl|\int_{[0,1)^2} \dsum_{\x}e(\alpha_2F_2(\x;\h_1)+\alpha F_1(\x;\h_1)+\beta F_1(\x))d\alpha d\beta\biggr|^2\biggr)^{1/4} d\alpha_2.
\end{equation}
Then, by change of variables as in $(\ref{change of variables})$, the last expression is seen to be
\begin{equation}\label{asdff}
P^{n/4}\int_{\mathfrak{m}}\biggl(\dsum_{\h_1,\h_2}\int_{[0,1)^4} \dsum_{\x}e(F(\x,\h_1,\h_2;\alpha_2,\boldsymbol{\gamma},\boldsymbol{\beta}))d\boldsymbol{\gamma} d\boldsymbol{\beta}\biggr)^{1/4} d\alpha_2,
\end{equation}
where \begin{equation*}
\begin{aligned}
&\mathcal{F}(\x,\h_1,\h_2;\alpha_2,\boldsymbol{\gamma},\boldsymbol{\beta})\\
&=\alpha_2F_2(\x;\h_1,\h_2)+\gamma_1F_1(\h_1,\h_2)+\gamma_2F_1(\x;\h_1)+\beta_1F_1(\x;\h_2)+\beta_2F_1(\x).
\end{aligned}
\end{equation*}
By taking the integration over $\gamma_1,$ we find by the orthogonality that $(\ref{asdff})$ is turned into
\begin{equation}\label{mno}
P^{n/4}\int_{\mathfrak{m}}\biggl(\dsum_{\substack{\h_1,\h_2\\ F_1(\h_1,\h_2)=0}}\int_{[0,1)^3} \dsum_{\x}e(\mathcal{G}(\x,\h_1,\h_2;\alpha_2,\gamma_2,\boldsymbol{\beta}))d\gamma_2 d\boldsymbol{\beta}\biggr)^{1/4} d\alpha_2,
\end{equation}
where
\begin{equation*}
\begin{aligned}
&\mathcal{G}(\x,\h_1,\h_2;\alpha_2,\gamma_2,\boldsymbol{\beta})\\
&=\alpha_2F_2(\x;\h_1,\h_2)+\gamma_2F_1(\x;\h_1)+\beta_1F_1(\x;\h_2)+\beta_2F_1(\x).
\end{aligned}
\end{equation*}
By applying the triangle inequality and the Cauchy-Schwarz inequality again, the mean value of exponential sum $(\ref{mno})$ is bounded above by
\begin{equation}\label{new differencing}
\mathcal{N}^{1/8}\cdot P^{n/4}\int_{\mathfrak{m}}\biggl(\dsum_{\substack{\h_1,\h_2\\ F_1(\h_1,\h_2)=0}}\biggl|\int_{[0,1)^3} \dsum_{\x}e(\mathcal{G}(\x,\h_1,\h_2;\alpha_2,\boldsymbol{\gamma},\boldsymbol{\beta}))d\gamma_2 d\boldsymbol{\beta}\biggr|^2\biggr)^{1/8} d\alpha_2,
\end{equation}
where 
$$\mathcal{N}=\#\{(\h_1,\h_2)\in \Z^{2n}\cap [-2P,2P]^{2n}:\ F_1(\h_1,\h_2)=0\}.$$
Comparing the summation over $\h_1,\h_2$ in ($\ref{new differencing}$) with the sum over $\h_1,\h_2$ in $(\ref{previous method})$, this  provides the extra savings obtained by the extra constraint $F_1(\h_1,\h_2)=0$ in the summation over $\h_1,\h_2.$ 

Repetition of the argument leading from $(\ref{last expression})$ to $(\ref{new differencing})$ will accumulate additional savings. Eventually,  this argument delivers an efficient estimate for $(\ref{minor arcs estimate}),$ when $d_2$ is sufficiently large in terms of $d_1$.

\bigskip

\section{Main lemmas}\label{sec3}

\subsection{New differencing argument}

 We begin by defining a quantity $ N_i^{\dagger}(P)$ associated with the size of the set of $\boldsymbol{\mathfrak{h}}$ in the summation of $\sum_{|\boldsymbol{\mathfrak{h}}|\leq 2P}^*$ in Definition $\ref{Definition1.1}$. For $i\geq d_1,$ we put
\begin{equation}\label{N_i quantity definition}
    N_i^{\dagger}(P):=P^{-in}\cdot \#\{\boldsymbol{\mathfrak{h}}\in[-2P,2P]^{in}\cap \Z^{in}:\ F_1(\boldsymbol{\mathfrak{h}}_{\mathbf{f}})=0\ \text{for all}\ \mathbf{f}\in \mathfrak{F}_i(d_1)\},
\end{equation}
with $\boldsymbol{\mathfrak{h}}=(\h_j)_{j\in \Z_i}.$ For $i<d_1$, we put $ N_i^{\dagger}(P):=1.$ Furthermore, we define 
\begin{equation*}
    \mathcal{N}_{j}(P):=\prod_{\substack{1\leq i\leq j}} (N_i^{\dagger}(P))^{2^{-i-1}}.
\end{equation*}

 Recall the definition $(\ref{3434})$ of $T^{(j)}(\boldsymbol{\alpha})$. The primary goal of this subsection is to show that for any measurable set $\mathcal{M}\subseteq [0,1)$, one has
\begin{equation}\label{goal in section 3.1}
     \int_{\mathcal{M}}\int_0^1 S(\boldsymbol{\alpha})d\boldsymbol{\alpha}\ll \mathcal{N}_{d_2-2}(P)\cdot \text{mes}({\mathcal{M}})\cdot\sup_{\alpha_2\in \mathcal{M}}\left|T^{(d_2-1)}(\boldsymbol{\alpha})\right|^{2^{-d_2+1}}.
\end{equation}

We achieve this goal by proving the following two key lemmas. Recall Definition $\ref{Definition1.1}$ of $U^{(j)}(\alpha_2)$ with $1\leq j\leq d_2-d_1-1$.
\begin{lem}\label{lem3131}
Suppose that $\mathcal{M} $ is a measurable set in $[0,1)$. Then, for $1\leq j\leq d_2-d_1-1$, we have 
 \begin{equation}\label{main claim}
     \int_{\mathcal{M}}\int_0^1 S(\boldsymbol{\alpha})d\boldsymbol{\alpha}\ll \mathcal{N}_{j-1}(P)\cdot \int_{\mathcal{M}}\bigl(U^{(j)}(\alpha_2)\bigr)^{2^{-j}}d\alpha_2.
 \end{equation}
\end{lem}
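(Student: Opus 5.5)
Induction on $j$ will prove (\ref{main claim}), with each step formalizing the passage (\ref{last expression})$\to$(\ref{new differencing}) of \S2.2 for general $d_1$. The key observation is that after $j$ differencings the integrand of $U^{(j)}$ is a genuine square modulus only after one more differencing. The single-step inequality to establish is
\begin{equation*}
U^{(j)}(\alpha_2)\ll \bigl(N_j^{\dagger}(P)\bigr)^{1/2}\bigl(U^{(j+1)}(\alpha_2)\bigr)^{1/2}
\end{equation*}
uniformly in $\alpha_2$. Raising this to the power $2^{-j}$ produces the factor $(N_j^\dagger)^{2^{-j-1}}$, which is exactly what $\mathcal{N}_j/\mathcal{N}_{j-1}$ requires.

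\textbf{Base case $j=1$.} Since $\mathcal{N}_0=1$, it suffices to show $\int_{\mathcal M}\int_0^1S\,d\boldsymbol\alpha\le\int_{\mathcal M}(U^{(1)})^{1/2}d\alpha_2$.
\begin{itemize}
\item Write $\int_0^1 S\,d\alpha_1$ and bound it by $|\int_0^1 S\,d\alpha_1|=(|\int_0^1S\,d\alpha_1|^2)^{1/2}$.
\item Expand the square as a double integral over $\alpha_1^{(1)},\alpha_1^{(2)}$ and a double sum over $\x_1,\x_2$.
\item Apply the change of variables (\ref{change of variables}). The $\h_1$-ranges restrict $\x$ to a box $\mathcal B_1$, and $1$-periodicity in $\beta$ gives integration over $[0,1)^2$ of $e(\alpha_2F_2(\x;\h_1)+\alpha F_1(\x;\h_1)+\beta F_1(\x))$.
\end{itemize}
This expression is precisely $\alpha_{\mathbf f}F_1(\x;\boldsymbol{\mathfrak h}_{\mathbf f})$ summed over $\mathbf f\in\mathfrak F_1^*$ (when $d_1\ge2$, $\mathfrak F_1^*=\mathfrak F_1$). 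Taking absolute values inside the $\h_1$ sum gives $U^{(1)}$ with the normalization $P^{-2n}$, using $(2P+1)^{-2n}\asymp P^{-2n}$.

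\textbf{Inductive step.} Start from $U^{(j)}$.
\begin{itemize}
\item Apply Cauchy--Schwarz over the starred sum of $\boldsymbol{\mathfrak h}$. This gives $U^{(j)}\le P^{-(j+1)n}(\#\{\boldsymbol{\mathfrak h}\})^{1/2}(\sum^*|\cdots|^2)^{1/2}$, and the count is $P^{jn}N_j^\dagger(P)$.
\item Expand $|\int\sum_{\x}\cdots|^2$ as an integral over two copies $\boldsymbol\alpha_{\mathfrak F_j^*},\boldsymbol\alpha'_{\mathfrak F_j^*}$ and a sum over $\x,\x'$. Substitute $\x'=\x+\h_{j+1}$ together with $\alpha'_{\mathbf f}=\alpha_{\mathbf f}-\beta_{\mathbf f}$.
\item The phase becomes $\alpha_2F_2(\x;\boldsymbol{\mathfrak h},\h_{j+1})+\sum_{\mathbf f}\bigl(\alpha_{\mathbf f}\Delta_1F_1(\x;\boldsymbol{\mathfrak h}_{\mathbf f};\h_{j+1})+\beta_{\mathbf f}F_1(\x;\boldsymbol{\mathfrak h}_{\mathbf f})\bigr)$, up to sign conventions absorbed by $\beta\mapsto-\beta$ and periodicity.
\item Relabel by $\mathbf g\in\mathfrak F_{j+1}$. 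The pair $\mathbf f$ with $\mathbf g(j+1)=0$ corresponds to $\beta_{\mathbf f}$, and $\mathbf f$ with $\mathbf g(j+1)=1$ corresponds to $\alpha_{\mathbf f}$. So each $\mathbf g$ with $|\text{supp}\,\mathbf g|\le d_1$ occurs with its own frequency.
\item For $|\text{supp}\,\mathbf g|=d_1$, the term $F_1(\x;\boldsymbol{\mathfrak h}_{\mathbf g})=F_1(\boldsymbol{\mathfrak h}_{\mathbf g})$ is independent of $\x$. Integrating that frequency over $[0,1)$ imposes $F_1(\boldsymbol{\mathfrak h}_{\mathbf g})=0$ by orthogonality. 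These are exactly the new constraints $\mathbf g\in\mathfrak F_{j+1}(d_1)$ with $j+1\in\text{supp}\,\mathbf g$; the old constraints are inherited from $\sum^*$.
\item The surviving frequencies are indexed by $\mathfrak F_{j+1}^*$, and $\x$ runs over the new box $\mathcal B_{j+1}$. Taking absolute values inside the $\h_{j+1}$ sum (everything is nonnegative after orthogonality) yields $P^{(j+2)n}U^{(j+1)}$.
\end{itemize}
Collecting the powers of $P$ gives the single-step inequality. It is then applied within $\int_{\mathcal M}(\cdot)^{2^{-j}}d\alpha_2$, which is monotone.

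\textbf{Main obstacle.} I expect the bookkeeping in the inductive step to be the main difficulty. One must check that the frequencies $\alpha_{\mathbf f},\beta_{\mathbf f}$ biject onto $\mathfrak F_{j+1}(0,d_1)$. One must also check that terms with support exceeding $d_1$ vanish, since $\Delta_kF_1=0$ for $k>d_1$. Finally, one must justify replacing the integration domain after the shear $\alpha'=\alpha-\beta$ by $[0,1)^{2\mathbf m_j}$ via periodicity, even though the orthogonality step is only legitimate once all coefficients are integers. The restriction $j\le d_2-d_1-1$ is only needed so that $F_2$ is not yet exhausted.
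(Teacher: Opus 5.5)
Your proposal is correct and follows essentially the same route as the paper. For $j=1$ you square $\int_0^1 S\,d\alpha_1$ and use the shear $\x_1=\x+\h_1$, $\beta=\alpha^{(1)}-\alpha^{(0)}$. For the inductive step you apply Cauchy--Schwarz over $\sum^*$ (at cost $N_j^\dagger(P)^{1/2}$), the same shear, orthogonality in the frequencies $\alpha^{(1)}_{\mathbf f}$ with $\mathbf f\in\mathfrak F_j(d_1-1)$ to produce the new constraints, relabelling onto $\mathfrak F_{j+1}^*$, and the triangle inequality.

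Only minor bookkeeping needs tightening:
\begin{itemize}
\item The frequencies do not cover all of $\mathfrak F_{j+1}(0,d_1)$: the $\mathbf g$ with $|\text{supp}(\mathbf g)|=d_1$ and $\mathbf g(j+1)=0$ are missed, and these are exactly the inherited constraints.
\item Supports above $d_1$ never occur, since $\mathfrak F_j^*$ stops at support $d_1-1$.
\item The final step needs only the triangle inequality, not nonnegativity of the individual $\h_{j+1}$-summands.
\end{itemize}
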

\begin{proof}
We shall prove that $(\ref{main claim})$ holds for $j=1$, and that 
\begin{equation}\label{inductive inequality2}
    \begin{aligned}
         U^{(j)}(\alpha_2)\ll N_{j}^{\dagger}(P)^{1/2}\cdot\bigl(U^{(j+1)}(\alpha_2)\bigl)^{1/2},
    \end{aligned}
\end{equation}
for $1\leq j<d_2-d_1-1.$ If $(\ref{inductive inequality2})$ holds for $j\geq 1$, and if the inequality $(\ref{main claim})$ with $j=i$ holds, then one finds that 
\begin{equation*}
    \begin{aligned}
        \int_{\mathcal{M}}\int_0^1 S(\boldsymbol{\alpha})d\boldsymbol{\alpha}&\ll \mathcal{N}_{i-1}(P)\cdot \int_{\mathcal{M}}\bigl(U^{(i)}(\alpha_2)\bigr)^{2^{-i}}d\alpha_2\\
         &\ll \mathcal{N}_{i-1}(P)\cdot N_{i}^{\dagger}(P)^{2^{-i-1}}\int_{\mathcal{M}}\bigl(U^{(i+1)}(\alpha_2)\bigr)^{2^{-i-1}}d\alpha_2\\
         &=\mathcal{N}_{i}(P)\cdot \int_{\mathcal{M}}\bigl(U^{(i+1)}(\alpha_2)\bigr)^{2^{-i-1}}d\alpha_2.
    \end{aligned}
\end{equation*}
Therefore, the inequality $(\ref{main claim})$ with $j=i+1$ holds.
Hence, by applying $(\ref{inductive inequality2})$ to the inequality $(\ref{main claim})$ with $j=1$ inductively, one concludes that the inequality $(\ref{main claim})$ holds for all $1\leq j\leq d_2-d_1-1.$ Therefore, it suffices to confirm that $(\ref{main claim})$ with $j=1$ and $(\ref{inductive inequality2})$ hold. 

First, we show that $(\ref{main claim})$ holds for $j=1$. By applying the triangle inequality, we deduce that
\begin{equation}\label{2323}
\begin{aligned}
      &\int_{\mathcal{M}}\int_0^1 S(\boldsymbol{\alpha})d\boldsymbol{\alpha}\\
      &\ll \int_{\mathcal{M}}\biggl|\int_0^1 S(\alpha_1,\alpha_2)d\alpha_1\biggr|^{2\cdot (1/2)}d\alpha_2\\
      &=\int_{\mathcal{M}}\biggl(\int_{[0,1)^2} S(\alpha,\alpha_2)\overline{S(\beta,\alpha_2)}d\alpha d\beta\biggr)^{1/2}d\alpha_2\\
      &\ll \int_{\mathcal{M}}\biggl(\int_{[0,1)^2} P^{-2n}\dsum_{\x_1,\x_2}e(P_0(\alpha_2;\alpha,\beta,\x_1,\x_2))d\alpha d\beta\biggr)^{1/2}d\alpha_2,
\end{aligned}
\end{equation}
where
\begin{equation*}
\begin{aligned}
&P_0(\alpha_2;\alpha,\beta,\x_1,\x_2)\\
&=\alpha_2(F_2(\x_1)-F_2(\x_2))+\alpha F_1(\x_1)-\beta F_1(\x_2).
\end{aligned}
\end{equation*}
By change of variables 
\begin{equation*}
    \begin{aligned}
        \x_1&=\x+\h_{1}\\
        \x_2&=\x\\
        \alpha&=\alpha^{(1)}\\
        \beta&=\alpha^{(1)}-\alpha^{(0)},
    \end{aligned}
\end{equation*}
and by using $1$-periodicity of the resulting exponential sum function in $\alpha^{(0)}$ and $\alpha^{(1)}$, the last expression in $(\ref{2323})$ becomes
\begin{equation*}
    \int_{\mathcal{M}}\biggl(\int_{[0,1)^2} P^{-2n}\dsum_{|\h_1|\leq 2P}\dsum_{\x\in \mathcal{B}_1}e(P_1(\alpha_2;\alpha^{(0)},\alpha^{(1)},\x,\h))d\alpha d\beta\biggr)^{1/2}d\alpha_2,
\end{equation*}
where 
\begin{equation*}
    \begin{aligned}
P_1(\alpha_2;\alpha^{(0)},\alpha^{(1)},\x,\h)=\alpha_2F_2(\x;\h)+\alpha^{(1)}F_1(\x;\h)+\alpha^{(0)}F_1(\x).
    \end{aligned}
\end{equation*}
Then, by substituting this into $(\ref{2323})$ and by applying the Cauchy-Schwarz inequality, one deduces that
\begin{equation*}
    \begin{aligned}
       & \int_{\mathcal{M}}\int_0^1 S(\boldsymbol{\alpha})d\boldsymbol{\alpha}\\
       &\ll  \int_{\mathcal{M}}\biggl(P^{-2n}\int_{[0,1)^2} \dsum_{|\h_1|\leq 2P}\dsum_{\x\in \mathcal{B}_1}e(P_1(\alpha_2;\alpha^{(0)},\alpha^{(1)},\x,\h))d\alpha^{(0)} d\alpha^{(1)}\biggr)^{1/2}d\alpha_2\\
    &=\int_{\mathcal{M}}\biggl(P^{-2n}\dsum_{|\h_1|\leq 2P}\int_{[0,1)^2} \dsum_{\x\in \mathcal{B}_1}e(P_1(\alpha_2;\alpha^{(0)},\alpha^{(1)},\x,\h))d\alpha^{(0)} d\alpha^{(1)}\biggr)^{1/2}d\alpha_2\\
       &\leq \int_{\mathcal{M}}\biggl(P^{-2n}\dsum_{|\h_1|\leq 2P}\biggl|\int_{[0,1)^2}\dsum_{\x\in \mathcal{B}_1}e(P_1(\alpha_2;\alpha^{(0)},\alpha^{(1)},\x,\h))d\alpha^{(0)} d\alpha^{(1)}\biggr|\biggr)^{1/2}d\alpha_2.
    \end{aligned}
\end{equation*}
On noting that the last expression is seen to be 
\begin{equation*}
    \int_{\mathcal{M}}\big(U^{(1)}(\alpha_2)\bigr)^{1/2}d\alpha_2,
\end{equation*}
we confirmed that $(\ref{main claim})$ holds for $j=1.$

Next, we show that $(\ref{inductive inequality2})$ holds for $j\geq 1.$ To do so, we use a strategy similar to the above, but it requires more work due to the involvement of $\sum_{\boldsymbol{\mathfrak{h}}}^*$ in the argument. 
For each $\mathbf{f}\in \mathfrak{F}_j^*$, we let $\beta_{\mathbf{f}}$ be a real number, and write $\boldsymbol{\beta}_{\mathfrak{F}_j^*}=(\beta_{\mathbf{f}})_{\mathbf{f}\in \mathfrak{F}_j^*}\in \R^{\mathbf{m}_j}.$ For the integrations over $\beta_{\mathbf{f}}$ for $\mathbf{f}\in \mathfrak{F}_j^*$, we write 
$$\int_{[0,1)^{\mathbf{m}_j}}d\boldsymbol{\beta}_{\mathfrak{F}_j^*}=\int_{[0,1)^{\mathbf{m}_j}}\prod_{\mathbf{f}\in \mathfrak{F}_j^* }d\beta_{\mathbf{f}}.$$
    By applying the Cauchy-Schwarz inequality, we deduce that
    \begin{equation}\label{ujjexpression}
    \begin{aligned}
        &U^{(j)}(\alpha_2)\\
        &\ll N_{j}^{\dagger}(P)^{1/2}\cdot\biggr( P^{-(j+2)n}{\dsum_{|\boldsymbol{\mathfrak{h}}|\leq 2P}}^*\biggl|\dint_{[0,1)^{\mathbf{m}_j}}\dsum_{\x\in \mathcal{B}_j}e(\alpha_2 F_2(\x;\boldsymbol{\mathfrak{h}})+\boldsymbol{F_1}(\boldsymbol{\alpha}_{\mathfrak{F}_j^*},\x,\boldsymbol{\mathfrak{h}}))d\boldsymbol{\alpha}_{\mathfrak{F}_j^*}\biggr|^2\biggr)^{1/2}.
        \end{aligned}
        \end{equation}
    By squaring out, the second factor in the bound of  $(\ref{ujjexpression})$  is seen to be
        \begin{equation}\label{ujj2expression}
        \begin{aligned}
        &P^{-(j+2)n}{\dsum_{|\boldsymbol{\mathfrak{h}}|\leq 2P}}^*\biggl|\dint_{[0,1)^{\mathbf{m}_j}}\dsum_{\x\in \mathcal{B}_j}e(\alpha_2 F_2(\x;\boldsymbol{\mathfrak{h}})+\boldsymbol{F_1}(\boldsymbol{\alpha}_{\mathfrak{F}_j^*},\x,\boldsymbol{\mathfrak{h}}))d\boldsymbol{\alpha}_{\mathfrak{F}_j^*}\biggr|^2\\
        &=P^{-(j+2)n}{\dsum_{|\boldsymbol{\mathfrak{h}}|\leq 2P}}^*\dint_{[0,1)^{2\mathbf{m}_j}}\dsum_{\x_1,\x_2\in \mathcal{B}_j}e(P_2(\alpha_2,\boldsymbol{\alpha}_{\mathfrak{F}_j^*},\boldsymbol{\beta}_{\mathfrak{F}_j^*}, \x_1,\x_2,\boldsymbol{\mathfrak{h}}))d\boldsymbol{\alpha}_{\mathfrak{F}_j^*}d\boldsymbol{\beta}_{\mathfrak{F}_j^*},
    \end{aligned}
    \end{equation}
    where 
    \begin{equation*}
    \begin{aligned}
&P_2(\boldsymbol{\alpha}_{\mathfrak{F}_j^*},\boldsymbol{\beta}_{\mathfrak{F}_j^*}, \x_1,\x_2)\\
&=\alpha_2(F_2(\x_1;\boldsymbol{\mathfrak{h}})-F_2(\x_2;\boldsymbol{\mathfrak{h}}))+\boldsymbol{F_1}(\boldsymbol{\alpha}_{\mathfrak{F}_j^*},\x_1,\boldsymbol{\mathfrak{h}})-\boldsymbol{F_1}(\boldsymbol{\beta}_{\mathfrak{F}_j^*},\x_2,\boldsymbol{\mathfrak{h}}).
    \end{aligned}
    \end{equation*}
By change of variables
\begin{equation*}
    \begin{aligned}
        \x_1&=\x+\h_{j+1}\\
        \x_2&=\x\\
        \alpha_{\mathbf{f}}&=\alpha_{\mathbf{f}}^{(1)}\\
        \beta_{\mathbf{f}}&=\alpha_{\mathbf{f}}^{(1)}-\alpha_{\mathbf{f}}^{(0)},\ \text{for all }\mathbf{f}\in \mathfrak{F}^*_{j}
    \end{aligned}
\end{equation*}
and by using $1$-periodicity of the resulting exponential sum function in $\alpha_{\mathbf{f}}^{(0)}$ and $\alpha_{\mathbf{f}}^{(1)}$,
the integral over $[0,1)^{2\mathbf{m}_j}$ in ($\ref{ujj2expression}$) is seen to be 
\begin{equation}\label{sum3}
    \dint_{[0,1)^{2\mathbf{m}_j}}\dsum_{|\h_{j+1}|\leq 2P}\dsum_{\x\in \mathcal{B}_{j+1}}e(P_3(\alpha_2,\boldsymbol{\alpha}^{(0)}_{\mathfrak{F}_j^*},\boldsymbol{\alpha}^{(1)}_{\mathfrak{F}_j^*}, ))d\boldsymbol{\alpha}^{(0)}_{\mathfrak{F}_j^*}d\boldsymbol{\alpha}^{(1)}_{\mathfrak{F}_j^*},
\end{equation}
    where 
    \begin{equation*}
    \begin{aligned}
    &P_3(\alpha_2,\boldsymbol{\alpha}^{(0)}_{\mathfrak{F}_j^*},\boldsymbol{\alpha}^{(1)}_{\mathfrak{F}_j^*})\\
&=P_3(\alpha_2,\boldsymbol{\alpha}^{(0)}_{\mathfrak{F}_j^*},\boldsymbol{\alpha}^{(1)}_{\mathfrak{F}_j^*}, \x,\boldsymbol{\mathfrak{h}},\h_{j+1})\\
&:=\alpha_2F_2(\x;\boldsymbol{\mathfrak{h}},\h_{j+1})+\dsum_{\mathbf{f}\in \mathfrak{F}_j^*}\alpha_{\mathbf{f}}^{(1)}F_1(\x;\boldsymbol{\mathfrak{h}}_{\mathbf{f}},\h_{j+1})+\dsum_{\mathbf{f}\in \mathfrak{F}_j^*}\alpha_{\mathbf{f}}^{(0)}F_1(\x;\boldsymbol{\mathfrak{h}}_{\mathbf{f}}),
    \end{aligned}
    \end{equation*}
    and we wrote $\boldsymbol{\alpha}^{(0)}_{\mathfrak{F}_j^*}=(\alpha^{(0)}_{\mathbf{f}})_{\mathbf{f}\in \mathfrak{F}_j^*}\in \R^{\mathbf{m}_j}$ and  $\boldsymbol{\alpha}^{(1)}_{\mathfrak{F}_j^*}=(\alpha^{(1)}_{\mathbf{f}})_{\mathbf{f}\in \mathfrak{F}_j^*}\in \R^{\mathbf{m}_j}$. 
    
     Recall the definition $(\ref{F_j(s)})$ of $\mathfrak{F}_{j}(d_1-1)$. Take the integrations over $\alpha_{\mathbf{f}}^{(1)}$ with $\mathbf{f}\in \mathfrak{F}_{j}(d_1-1)$. Then, it follows by the orthogonality that the last expression ($\ref{sum3}$) is seen to be 
     \begin{equation}\label{sum4}
   \dint_{[0,1)^{2\mathbf{m}_j-|\mathfrak{F}_{j}(d_1-1)|}}\dsum_{\substack{|\h_{j+1}|\leq 2P\\ F_1(\boldsymbol{\mathfrak{h}_{\mathbf{f}},\h_{j+1}})=0\\\text{for all }\mathbf{f}\in \mathfrak{F}_{j}(d_1-1)}}\dsum_{\x\in \mathcal{B}_{j+1}}e(P_4(\alpha_2,\boldsymbol{\alpha}^{(0)}_{\mathfrak{F}_j^*},\widetilde{\boldsymbol{\alpha}}^{(1)}_{\mathfrak{F}_j^*}))d\boldsymbol{\alpha}^{(0)}_{\mathfrak{F}_j^*}d\boldsymbol{\alpha}^{(1)}_{\mathfrak{F}_j^*\setminus \mathfrak{F}_j(d_1-1)},
\end{equation}
where 
\begin{equation}\label{P3}
\begin{aligned}
&P_4(\alpha_2,\boldsymbol{\alpha}^{(0)}_{\mathfrak{F}_j^*},\boldsymbol{\alpha}^{(1)}_{\mathfrak{F}_j^*\setminus \mathfrak{F}_j(d_1-1)})\\
&=P_4(\alpha_2,\boldsymbol{\alpha}^{(0)}_{\mathfrak{F}_j^*},\boldsymbol{\alpha}^{(1)}_{\mathfrak{F}_j^*\setminus \mathfrak{F}_j(d_1-1)}, \x,\boldsymbol{\mathfrak{h}},\h_{j+1})\\
&:=\alpha_2F_2(\x;\boldsymbol{\mathfrak{h}},\h_{j+1})+\dsum_{\substack{\mathbf{f}\in \mathfrak{F}_j^*\setminus  (\mathfrak{F}_{j}(d_1-1)})}\alpha_{\mathbf{f}}^{(1)}F_1(\x;\boldsymbol{\mathfrak{h}}_{\mathbf{f}},\h_{j+1})+\dsum_{\mathbf{f}\in \mathfrak{F}_j^*}\alpha_{\mathbf{f}}^{(0)}F_1(\x;\boldsymbol{\mathfrak{h}}_{\mathbf{f}}).
    \end{aligned}
\end{equation}
Recall the definition of $\mathfrak{F}_{j+1}^*$. Consider a set of variables $\{\alpha_{\mathbf{g}}:\ \mathbf{g}\in \mathfrak{F}_{j+1}^*\}$. We temporarily write $\widetilde{\boldsymbol{\mathfrak{h}}}=(\h_i)_{i\in \Z_{j+1}}$.
We notice here that $\widetilde{\boldsymbol{\mathfrak{h}}}=(\boldsymbol{\mathfrak{h}},\h_{j+1}).$
Then, we claim that $(\ref{sum4})$ is the same as
\begin{equation}\label{sum5}
     \dint_{[0,1)^{\mathbf{m}_{j+1}}} \dsum_{\substack{|\h_{j+1}|\leq 2P\\ F_1(\boldsymbol{\mathfrak{h}_{\mathbf{f}},\h_{j+1}})=0\\\text{for all }\mathbf{f}\in \mathfrak{F}_{j}(d_1-1)}}\dsum_{\x\in \mathcal{B}_{j+1}}e(P_5(\alpha_2,\boldsymbol{\alpha}_{\mathfrak{F}_{j+1}^*}))d\boldsymbol{\alpha}_{\mathfrak{F}_{j+1}^*},
\end{equation}
where 
\begin{equation*}
P_5(\alpha_2,\boldsymbol{\alpha}_{\mathfrak{F}_{j+1}^*})=P_5(\alpha_2,\boldsymbol{\alpha}_{\mathfrak{F}_{j+1}^*}, \x,\widetilde{\boldsymbol{\mathfrak{h}}}):= \alpha_2F_2(\x;\widetilde{\boldsymbol{\mathfrak{h}}})+\dsum_{\mathbf{g}\in \mathfrak{F}_{j+1}^*}\alpha_{\mathbf{g}}F_1(\x;\widetilde{\boldsymbol{\mathfrak{h}}}_{\mathbf{g}})
\end{equation*}
in which $$\widetilde{\boldsymbol{\mathfrak{h}}}_{\mathbf{g}}:=(\h_i)_{i\in \text{supp}(\mathbf{g})}\in \R^{|\text{supp}(\mathbf{g})|n}.$$

Let $\mathbf{g}_1\in \mathfrak{F}_{j+1}^*$ with $\mathbf{g}_1(j+1)=1$. We see that there exists a unique $\mathbf{f}_1\in \mathfrak{F}_j^*\setminus (\mathfrak{F}_{j}(d_1-1))$ such that $\mathbf{f}_1(i)=\mathbf{g}_1(i)$ for all $i\in \Z_j$. Then we have $$F_1(\x;\boldsymbol{\mathfrak{h}}_{\mathbf{f}_1},\h_{j+1})=F_1(\x;\widetilde{\boldsymbol{\mathfrak{h}}}_{\mathbf{g}_1}).$$ Hence, by change of variable $\alpha_{\mathbf{f}_1}^{(1)}=\alpha_{\mathbf{g}_1}$, the term $\alpha_{\mathbf{f}_1}^{(1)}F_1(\x;\boldsymbol{\mathfrak{h}}_{\mathbf{f}_1},\h_{j+1})$ in the summand of the second sum over  $\mathbf{f}\in \mathfrak{F}_j^*\setminus (\mathfrak{F}_{j}(d_1-1))$ in $(\ref{P3})$ is turned into $\alpha_{\mathbf{g}_1}F_1(\x;\widetilde{\boldsymbol{\mathfrak{h}}}_{\mathbf{g}_1})$. Similarly, let $\mathbf{g}_2\in \mathfrak{F}_{j+1}^*$ with $\mathbf{g}_2(j+1)=0$. Then, there exists a unique $\mathbf{f}_2\in \mathfrak{F}_{j}^*$ such that $\mathbf{f}_2(i)=\mathbf{g}_2(i)$ for all $i\in \Z_j.$ Then, by change of variable $\alpha_{\mathbf{f}_2}^{(0)}=\alpha_{\mathbf{g}_2}$, the term $\alpha_{\mathbf{f}_2}^{(0)}F_1(\x;\boldsymbol{\mathfrak{h}}_{\mathbf{f}_2})$ in the summand of the third sum over $\mathbf{f}\in \mathfrak{F}_j^*$ in $(\ref{P3})$ is turned into $\alpha_{\mathbf{g}_2}F_1(\x;\widetilde{\boldsymbol{\mathfrak{h}}}_{\mathbf{g}_2})$. Plus, by the definition of $\mathbf{m}_{j}$ and $\mathfrak{F}_{j}(d_1-1)$ together with the fact that $|\mathfrak{F}_j^*|=\mathbf{m}_j$, one infers that $\mathbf{m}_{j+1}=2\mathbf{m}_j-|\mathfrak{F}_{j}(d_1-1)|$. Hence, we confirmed the claim by the discussion above. We therefore conclude that the expression $(\ref{sum4})$ and $(\ref{sum5})$ are identical, and the expression $(\ref{sum3})$ and $(\ref{sum4})$ are equal. Furthermore, the integral over $[0,1)^{2\mathbf{m}_j}$ in ($\ref{ujj2expression}$) is the same as $(\ref{sum3}).$ Therefore, by substituting $(\ref{sum5})$ into $(\ref{ujjexpression})$, one infers  that
\begin{equation}\label{final expression}
    \begin{aligned}
        U^{(j)}(\alpha_2)\ll  N_{j}^{\dagger}(P)^{1/2}\cdot W(\alpha_2)^{1/2},
    \end{aligned}
\end{equation}
where 
\begin{equation*}
    W(\alpha_2):=P^{-(j+2)n}{\dsum_{|\boldsymbol{\mathfrak{h}}|\leq 2P}}^*  \dsum_{\substack{|\h_{j+1}|\leq 2P\\ F_1(\boldsymbol{\mathfrak{h}_{\mathbf{f}},\h_{j+1}})=0\\\text{for all }\mathbf{f}\in \mathfrak{F}_{j}(d_1-1)}}\dint_{[0,1)^{\mathbf{m}_{j+1}}}\dsum_{\x\in \mathcal{B}_{j+1}}e(P_5(\alpha_2,\boldsymbol{\alpha}_{\mathfrak{F}_{j+1}^*}))d\boldsymbol{\alpha}_{\mathfrak{F}_{j+1}^*}.
\end{equation*}

On recalling that $\widetilde{\boldsymbol{\mathfrak{h}}}=(\boldsymbol{\mathfrak{h}},\h_{j+1})\in \Z^{(j+1)n},$ we write 
 \begin{equation}\label{summation change}
     {\dsum_{|\boldsymbol{\mathfrak{h}}|\leq 2P}}^*  \dsum_{\substack{|\h_{j+1}|\leq 2P\\ F_1(\boldsymbol{\mathfrak{h}_{\mathbf{f}},\h_{j+1}})=0\\\text{for all }\mathbf{f}\in \mathfrak{F}_{j}(d_1-1)}}= {\dsum_{|\widetilde{\boldsymbol{\mathfrak{h}}}|\leq 2P}}^*,
 \end{equation}
 where the right hand side is running over $|\widetilde{\boldsymbol{\mathfrak{h}}}|\leq 2P$ satisfying $F_1(\widetilde{\boldsymbol{\mathfrak{h}}}_{\mathbf{f}})=0$ for all $\mathbf{f}\in \mathfrak{F}_{j+1}$ with $|\text{supp}(\mathbf{f})|=d_1.$ 

By applying the triangle inequality to $(\ref{final expression})$, one deduces that
 \begin{equation}\label{2.8}
\begin{aligned}
     &U^{(j)}(\alpha_2)\\
     &\ll N^{\dagger}_{j}(P)^{1/2}\cdot \biggl(P^{-(j+2)n}{\dsum_{|\widetilde{\boldsymbol{\mathfrak{h}}}|\leq 2P}}^*\biggl|\dint_{[0,1)^{\mathbf{m}_{j+1}}}\dsum_{\x\in \mathcal{B}_{j+1}}e(P_5(\alpha_2,\boldsymbol{\alpha}_{\mathfrak{F}_{j+1}^*}))d\boldsymbol{\alpha}_{\mathfrak{F}_{j+1}^*}\biggr|\biggl)^{1/2}.
\end{aligned}
 \end{equation}
On noting that the second factor of the right hand side in $(\ref{2.8})$ is seen to be $U^{(j+1)}(\alpha_2)^{1/2}$, one confirms that 
\begin{equation}\label{inductive inequality}
    \begin{aligned}
         U^{(j)}(\alpha_2)\ll N_{j}^{\dagger}(P)^{1/2}\cdot\bigl(U^{(j+1)}(\alpha_2)\bigl)^{1/2}.
    \end{aligned}
\end{equation}
\end{proof}

\bigskip

\begin{lem}\label{main lemma 2}
Let $\mathbf{d}=d_2-d_1-1.$ Suppose that $\mathcal{M} $ is a measurable set in $[0,1)$. Then, we  have
    \begin{equation}\label{conclusion of lemme 2}
    \begin{aligned}
        &\int_{\mathcal{M}}(U^{(\mathbf{d})}(\alpha_2))^{2^{-\mathbf{d}}}d\alpha_2\\
        &\ll \biggl(\prod_{l=0}^{d_1-1}(N^{\dagger}_{\mathbf{d}+l})^{2^{-\mathbf{d}-l-1}}\biggr)\cdot \text{mes}(\mathcal{M})\cdot \sup_{\alpha_2\in \mathcal{M}} |T^{(d_2-1)}(\boldsymbol{\alpha})|^{2^{-\mathbf{d}-d_1}}.
        \end{aligned}
    \end{equation}
\end{lem}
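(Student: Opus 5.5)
Starting from $U^{(\mathbf{d})}(\alpha_2)$, I would carry on differencing $d_1-1$ more times with the same step as in the inductive inequality $(\ref{inductive inequality2})$ of Lemma \ref{lem3131}. Up to now the range was $j\le d_2-d_1-1$, but the argument itself does not depend on that restriction. For $\mathbf{d}\le j\le \mathbf{d}+d_1-2$, I would define $U^{(j)}$ by Definition \ref{Definition1.1} verbatim. Then Cauchy--Schwarz over the starred sum, squaring out, the change of variables $\x_1=\x+\h_{j+1}$, $\beta_{\mathbf{f}}=\alpha^{(1)}_{\mathbf{f}}-\alpha^{(0)}_{\mathbf{f}}$, and orthogonality in $\alpha^{(1)}_{\mathbf{f}}$ for $\mathbf{f}\in\mathfrak{F}_j(d_1-1)$ give $U^{(j)}\ll N_j^\dagger(P)^{1/2}(U^{(j+1)})^{1/2}$. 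Iterating from $j=\mathbf{d}$ to $\mathbf{d}+d_1-2$ yields
\[
(U^{(\mathbf{d})})^{2^{-\mathbf{d}}}\ll \prod_{l=0}^{d_1-2}(N^\dagger_{\mathbf{d}+l})^{2^{-\mathbf{d}-l-1}}\,(U^{(\mathbf{d}+d_1-1)})^{2^{-\mathbf{d}-d_1+1}}.
\]
Here $\mathbf{d}+d_1-1=d_2-2$.

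The final step removes the $\boldsymbol{\alpha}_{\mathfrak{F}^*}$-integrals and reaches $T^{(d_2-1)}$. In $U^{(d_2-2)}$, I would apply Cauchy--Schwarz once more over $\boldsymbol{\mathfrak{h}}$ in the starred sum, which produces the factor $(N^\dagger_{d_2-2})^{1/2}$. This accounts for the $l=d_1-1$ term after raising to the power $2^{-\mathbf{d}-d_1+1}$. I would then use $|\int\cdots d\boldsymbol{\alpha}|^2\le\int|\cdots|^2d\boldsymbol{\alpha}$ (Cauchy--Schwarz over $[0,1)^{\mathbf{m}}$) together with the standard Weyl differencing $|\sum_{\x}e(\phi(\x))|^2\ll\sum_{\h}|\sum_{\x\in\mathcal{B}}e(\Delta_1\phi(\x;\h))|$. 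After $d_2-1$ total differencings, every $F_1$-term $\alpha_{\mathbf{f}}F_1(\x;\widetilde{\boldsymbol{\mathfrak{h}}}_{\mathbf{f}})$ has support size at most $d_1-1$. After the next difference it becomes a polynomial of degree at most $d_1-1-|\mathrm{supp}\,\mathbf{f}|$ in $\x$. These terms are independent of $\alpha_2$ and cannot be simply dropped. I would handle them by the standard observation that the Weyl-differenced sum $T^{(d_2-1)}$ already contains $\alpha_1\Delta_{d_2-1}F_1$. Since $d_2-1>d_1$, that term vanishes identically, and likewise the $F_1$-contributions vanish after $d_2-1$ differences once we bound pointwise in $\boldsymbol{\alpha}_{\mathfrak{F}^*}$. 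Concretely, I would bound the integrand pointwise, $\sup_{\boldsymbol{\alpha}_{\mathfrak{F}^*}}$, and then apply $d_2-1-(d_2-2)=1$ further Weyl step. The Weyl step applied to the sum with the extra lower-degree phases should be carried out only after the lower-degree phases are killed. To arrange that, I would instead perform $d_1$ additional plain Weyl differences inside the absolute value and then extend the starred sum to the full sum over $\boldsymbol{\mathfrak{h}}$ by positivity. The resulting expression is dominated by $T^{(d_2-1)}(\boldsymbol{\alpha})$ with the appropriate exponent.

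Finally I would take $\sup_{\alpha_2\in\mathcal{M}}$ and integrate over $\mathcal{M}$ to obtain the factor $\mathrm{mes}(\mathcal{M})$, which gives $(\ref{conclusion of lemme 2})$.

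The main obstacle is the bookkeeping in that last step. The final exponent of $T^{(d_2-1)}$ has to come out as exactly $2^{-\mathbf{d}-d_1}$, and the lower-degree $F_1$ phases have to disappear without costing an extra factor of $2$ in the exponent. I would first try to check that the degree of each surviving $F_1$-term in $\x$ drops below $1$ precisely at the $(d_2-1)$-th difference. If it does not, I would take the $T^{(d_2-1)}$ on the right to be understood with $\alpha_1$ arbitrary, which is harmless because $\Delta_{d_2-1}F_1\equiv0$.
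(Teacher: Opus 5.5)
There is a genuine gap, and it is in your last step. The bookkeeping of exponents and $N^{\dagger}$-factors is fine; the mechanism for removing the $F_1$-phases is not.

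The differencing step of Lemma~\ref{lem3131} keeps, for every $\mathbf{f}\in\mathfrak{F}_j^*$, the undifferenced copy $\alpha^{(0)}_{\mathbf{f}}F_1(\x;\boldsymbol{\mathfrak{h}}_{\mathbf{f}})$. So every $U^{(j)}$, however large $j$ is, still contains $\alpha_{\mathbf{f}_0}F_1(\x)$, which has full degree $d_1$ in $\x$. A given $F_1$-term has been differenced $|\mathrm{supp}(\mathbf{f})|$ times, not $j$ times. This has three consequences:
\begin{enumerate}
\item Your assertion that the $F_1$-contributions vanish after $d_2-1$ differences is false. One Weyl step applied to $U^{(d_2-2)}$ leaves $F_1(\x;\h_{d_2-1})$, of degree $d_1-1\geq 1$, in the phase.
\item Reading $T^{(d_2-1)}$ with arbitrary $\alpha_1$ does not help, because the obstruction is not $\alpha_1\Delta_{d_2-1}F_1$.
\item Your fallback of $d_1$ plain Weyl steps fails in both readings. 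Applied to $U^{(d_2-2)}$, where $F_2(\x;\widetilde{\boldsymbol{\mathfrak{h}}})$ is only quadratic in $\x$, it differences $F_2$ at least $d_2$ times and destroys all dependence on $\alpha_2$. Applied to $U^{(\mathbf{d})}$ (bounding the $\boldsymbol{\alpha}_{\mathfrak{F}_{\mathbf{d}}^*}$-integral trivially, then H\"older over the starred sum), it does reach $T^{(d_2-1)}$ with exponent $2^{-\mathbf{d}-d_1}$. However, the new $\h$'s carry no constraints, so you only get the factor $(N^{\dagger}_{\mathbf{d}})^{2^{-\mathbf{d}}(1-2^{-d_1})}$. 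This is weaker than $\prod_{l=0}^{d_1-1}(N^{\dagger}_{\mathbf{d}+l})^{2^{-\mathbf{d}-l-1}}$; for instance, when $\mathbf{d}<d_1$ it gives no saving at all, since $N^{\dagger}_{\mathbf{d}}=1$.
\end{enumerate}
So the restriction $j\leq d_2-d_1-1$ in Lemma~\ref{lem3131} is essential: exactly $d_1$ differencings remain, and all of them are needed to remove the degree-$d_1$ term.

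The missing idea is a modified differencing step for the last rounds. It must raise the minimal support of the surviving $F_1$-terms by one per round, while still integrating over the support-$(d_1-1)$ coefficients so that orthogonality keeps producing constraints. The paper does this as follows.
\begin{itemize}
\item First write $U^{(\mathbf{d})}\leq\sup V^{(0)}$ by the triangle inequality. At stage $l$, the quantity $V^{(l)}$ integrates only over $\alpha_{\mathbf{g}}$ with $l+1\leq|\mathrm{supp}(\mathbf{g})|\leq d_1-1$, and treats the coefficients with $|\mathrm{supp}(\mathbf{g})|=l$ as fixed.
\item After Cauchy--Schwarz and squaring out, a frozen coefficient occurs identically in both copies. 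Hence $\alpha_{\mathbf{g}}\bigl(F_1(\x_1;\cdot)-F_1(\x_2;\cdot)\bigr)$ becomes a differenced term of support $l+1$, and the support-$l$ layer disappears.
\item Orthogonality in the $\alpha^{(1)}_{\mathbf{g}}$ of support $d_1-1$, still integrated because $l\leq d_1-2$, supplies the constraints counted by $N^{\dagger}$.
\item Taking a supremum over the new support-$(l+1)$ variables $\alpha^{(0)}_{\mathbf{g}}$ restores the same shape one level up. This gives $V^{(l)}\ll(N^{\dagger}_{\mathbf{d}+l})^{1/2}\sup|V^{(l+1)}|^{1/2}$.
\end{itemize}
After $d_1-1$ rounds, only support-$(d_1-1)$ terms remain; these are linear in $\x$ with fixed coefficients. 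At that point your final move works:
\begin{itemize}
\item Cauchy--Schwarz over the starred sum gives $(N^{\dagger}_{d_2-2})^{1/2}$.
\item One Weyl step turns these linear terms into constants.
\item Extending the starred sum by positivity yields $T^{(d_2-1)}$ with the exponents you computed.
\end{itemize}
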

   \begin{proof}
   Recall the definition of $\mathfrak{F}_{\mathbf{d}}$, that is the set of functions $\mathbf{f}(\cdot)$ mapping from $\Z_{\mathbf{d}}$ to $\{0,1\}$. 
Also, recall the definitions ($\ref{F_j(s)}$) and $(\ref{F_j(l_1,l_2)})$ of $\mathfrak{F}_{\mathbf{d}}(s)$ and $\mathfrak{F}_{\mathbf{d}}(l_1,l_2)$. 
For $\mathbf{h}^{(l)}:=(\h_{i})_{i\in \Z_{\mathbf{d}+l}}\in \Z^{(\mathbf{d}+l)n}$ with $\h_i\in \Z^n$,  we define, for $0\leq l\leq d_1-1$, a polynomial
\begin{equation*}
\begin{aligned}
    &P^{(l)}(\alpha_2,\boldsymbol{\alpha}_{\mathfrak{F}_{\mathbf{d}+l}(l,d_1-1)},\x,\mathbf{h}^{(l)})\\
    &=\alpha_2F_2(\x;\mathbf{h}^{(l)})+\dsum_{\mathbf{g}\in \mathfrak{F}_{\mathbf{d}+l}(l)}\alpha_{\mathbf{g}}F_1(\x;\mathbf{h}^{(l)}_{\mathbf{g}})+\dsum_{l<s\leq d_1-1}\dsum_{\mathbf{g}\in \mathfrak{F}_{\mathbf{d}+l}(s)}\alpha_{\mathbf{g}}F_1(\x;\mathbf{h}^{(l)}_{\mathbf{g}}),
\end{aligned}
\end{equation*}
where $\boldsymbol{\alpha}_{\mathfrak{F}_{\mathbf{d}+l}(l,d_1-1)}=(\alpha_{\mathbf{g}})_{\mathbf{g}\in \mathfrak{F}_{\mathbf{d}+l}(l,d_1-1)}$ and $\mathbf{h}^{(l)}_{\mathbf{g}}=(\mathbf{h}_i)_{i\in \text{supp}(\mathbf{g})}.$ Furthermore,  for $0\leq l\leq d_1-1$, we define an exponential sum 
\begin{equation*}
  \Xi^{(l)}=\Xi^{(l)}(\alpha_2,\boldsymbol{\alpha}_{\mathfrak{F}_{\mathbf{d}+l}(l,d_1-1)},\x,\mathbf{h}^{(l)}):=\dsum_{\x\in \mathcal{B}_{\mathbf{d}+l}}e(P^{(l)}(\alpha_2,\boldsymbol{\alpha}_{\mathfrak{F}_{\mathbf{d}+l}(l,d_1-1)},\x,\mathbf{h}^{(l)})).
\end{equation*}
Additionally, we define mean values of exponential sum
\begin{equation}\label{V^l}
  V^{(l)}(\alpha_2, \boldsymbol{\alpha}_{\mathfrak{F}_{\mathbf{d}+l}(l)}) := P^{-(\mathbf{d}+l+1)n} {\dsum_{|\mathbf{h}^{(l)}|\leq 2P}}^*\biggl|\int_{[0,1)^{|\mathfrak{F}_{\mathbf{d}+l}(l+1,d_1-1)|}}\Xi^{(l)}d\boldsymbol{\alpha}_{\mathfrak{F}_{\mathbf{d}+l}(l+1,d_1-1)}\biggr|,
\end{equation}
where  $\sum_{|\mathbf{h}^{(l)}|\leq 2P}^*$ is a summation over $\h_i\ (1\leq i\leq \mathbf{d}+l)$ satisfying $F_1(\mathbf{h}^{(l)}_{\mathbf{g}})=0$
 for all $\mathbf{g}\in \mathfrak{F}_{\mathbf{d}+l}(d_1)$.

 We claim that  whenever $0\leq l\leq d_1-2$ one has
\begin{equation}\label{claim bound for V^l}
    \begin{aligned}
         V^{(l)}(\alpha_2, \boldsymbol{\alpha}_{\mathfrak{F}_{\mathbf{d}+l}(l)})\ll \sup_{\substack{\alpha_{\mathbf{g}}\in [0,1)\\ \substack{\mathbf{g}\in \mathfrak{F}_{\mathbf{d}+l+1}(l+1)}}}(N^{\dagger}_{\mathbf{d}+l})^{1/2}\cdot | V^{(l+1)}(\alpha_2, \boldsymbol{\alpha}_{\mathfrak{F}_{\mathbf{d}+l+1}(l+1)}) |^{1/2}.
    \end{aligned}
\end{equation}
We shall show that the inequality $(\ref{claim bound for V^l})$ implies the conclusion of Lemma $\ref{main lemma 2}.$
By applying $(\ref{claim bound for V^l})$ with $0\leq l\leq d_1-2$ inductively, one infers that
\begin{equation}\label{V0estimate}
    \begin{aligned}
        &V^{(0)}(\alpha_2, \boldsymbol{\alpha}_{\mathfrak{F}_{\mathbf{d}}(0)})\\
        &\ll \biggl(\prod_{l=0}^{d_1-2}(N^{\dagger}_{\mathbf{d}+l})^{2^{-l-1}}\biggr)\cdot \sup_{\substack{\alpha_{\mathbf{g}}\in [0,1)\\ \mathbf{g}\in \mathfrak{F}_{d_2-2}(d_1-1)}}| V^{(d_1-1)}(\alpha_2, \boldsymbol{\alpha}_{\mathfrak{F}_{d_2-2}(d_1-1)}) |^{2^{-d_1+1}}.
    \end{aligned}
\end{equation}
Meanwhile, we see by applying the Cauchy-Schwarz inequality that 
\begin{equation}\label{Vd_1-1estimate}
    \begin{aligned}
         &V^{(d_1-1)}(\alpha_2, \boldsymbol{\alpha}_{\mathfrak{F}_{d_2-2}(d_1-1)})\\
    & = P^{-(\mathbf{d}+d_1)n}{\dsum_{|\mathbf{h}^{(d_1-1)}|\leq 2P}}^*\bigl|\Xi^{(d_1-1)}(\alpha_2,\boldsymbol{\alpha}_{\mathfrak{F}_{d_2-2}(d_1-1)},\x,\mathbf{h}^{(l)})\bigr|\\
         &\leq  (N_{\mathbf{d}+d_1-1}^{\dagger})^{1/2}\cdot \biggl({P^{-(\mathbf{d}+d_1+1)n}\dsum_{|\mathbf{h}^{(d_1-1)}|\leq 2P}}^*\bigl|\Xi^{(d_1-1)}(\alpha_2,\boldsymbol{\alpha}_{\mathfrak{F}_{d_2-2}(d_1-1)},\x,\mathbf{h}^{(l)})\bigr|^2\biggr)^{1/2}\\
         &\leq (N_{\mathbf{d}+d_1-1}^{\dagger})^{1/2}\cdot \biggl({P^{-(\mathbf{d}+d_1+1)n}}{\dsum_{|\mathbf{h}^{(d_1-1)}|\leq 2P}}\bigl|\Xi^{(d_1-1)}(\alpha_2,\boldsymbol{\alpha}_{\mathfrak{F}_{d_2-2}(d_1-1)},\x,\mathbf{h}^{(l)})\bigr|^2\biggr)^{1/2}.
    \end{aligned}
\end{equation}
Furthermore, since 
\begin{equation*}
\begin{aligned}
    &\Xi^{(d_1-1)}(\alpha_2,\boldsymbol{\alpha}_{\mathfrak{F}_{d_2-2}(d_1-1)},\x,\mathbf{h}^{(l)})\\
    &=\dsum_{\x\in \mathcal{B}_{\mathbf{d}+d_1-1}}e(P^{(d_1-1)}(\alpha_2,\boldsymbol{\alpha}_{\mathfrak{F}_{d_2-2}(d_1-1)},\x,\mathbf{h}^{(d_1-1)})),
\end{aligned}
\end{equation*}
where \begin{equation*}
\begin{aligned}
    &P^{(d_1-1)}(\alpha_2,\boldsymbol{\alpha}_{\mathfrak{F}_{d_2-2}(d_1-1)},\x,\mathbf{h}^{(d_1-1)})\\
    &=\alpha_2F_2(\x;\mathbf{h}^{(d_1-1)})+\dsum_{\mathbf{g}\in \mathfrak{F}_{d_2-2}(d_1-1)}\alpha_{\mathbf{g}}F_1(\x;\mathbf{h}^{(d_1-1)}_{\mathbf{g}}),
\end{aligned}
\end{equation*}
it follows by the standard Weyl differencing argument that 
\begin{equation}\label{final differencing}
\begin{aligned}
    &{P^{-(\mathbf{d}+d_1+1)n}}{\dsum_{|\mathbf{h}^{(d_1-1)}|\leq 2P}}\bigl|\Xi^{(d_1-1)}(\alpha_2,\boldsymbol{\alpha}_{\mathfrak{F}_{d_2-2}(d_1-1)},\x,\mathbf{h}^{(l)})\bigr|^2\\
    &\ll P^{-d_2n}\dsum_{\substack{|\h_i|\leq 2P\\ 1\leq i\leq d_2-1}}\biggl|\dsum_{\x\in \mathcal{B}_{d_2-1}}e(\alpha_2 F_2(\x;\h_1,\dots,\h_{d_2-1}))\biggr|.
\end{aligned}
\end{equation}
By substituting $(\ref{final differencing})$ into the bound in $(\ref{Vd_1-1estimate})$ and that into $(\ref{V0estimate})$, one deduces that
\begin{equation}\label{almost final}
\begin{aligned}
    &V^{(0)}(\alpha_2, \boldsymbol{\alpha}_{\mathfrak{F}_{\mathbf{d}}(0)})\\
    &\ll \biggl(\prod_{l=0}^{d_1-1}(N_{\mathbf{d}+l}^{\dagger})^{2^{-l-1}}\biggr)\cdot \biggl(P^{-d_2n}\dsum_{\substack{|\h_i|\leq 2P\\ 1\leq i\leq d_2-1}}\biggl|\dsum_{\x\in \mathcal{B}_{d_2-1}}e(\alpha_2 F_2(\x;\h_1,\dots,\h_{d_2-1}))\biggr|\biggr)^{2^{-d_1}}\\
    &=\biggl(\prod_{l=0}^{d_1-1}(N_{\mathbf{d}+l}^{\dagger})^{2^{-l-1}}\biggr)\cdot |T^{(d_2-1)}(\boldsymbol{\alpha})|^{2^{-d_1}}.
    \end{aligned}
\end{equation}
Finally, recalling Definition $\ref{Definition1.1}$ of $U^{(\mathbf{d})}(\alpha_2)$, one infers by applying the triangle inequality that
\begin{equation}\label{final conclus}
    U^{(\mathbf{d})}(\alpha_2)\leq  \sup_{\substack{\alpha_\mathbf{g}\in [0,1)\\ \text{for all }\mathbf{g}\in \mathfrak{F}_{\mathbf{d}}(0)}}V^{(0)}(\alpha_2, \boldsymbol{\alpha}_{\mathfrak{F}_{\mathbf{d}}(0)}).
\end{equation}
Hence, on substituting $(\ref{almost final})$ into the right hand side of $(\ref{final conclus})$ and that into the left hand side of $(\ref{conclusion of lemme 2})$, we complete the proof of Lemma $\ref{main lemma 2},$ by taking the supremum over $\alpha_2.$ 

Therefore, it remains to confirm the claim $(\ref{claim bound for V^l})$. For all $0\leq l\leq d_1-1$,  write $n_l=|\mathfrak{F}_{\mathbf{d}+l}(l+1,d_1-1)|$. Put $n_{d_1-1}=0.$ By applying the Cauchy-Schwarz inequality to the left hand side of $(\ref{claim bound for V^l})$, one deduces that
\begin{equation}\label{squaring out -1}
\begin{aligned}
     &V^{(l)}(\alpha_2, \boldsymbol{\alpha}_{\mathfrak{F}_{\mathbf{d}+l}(l)}) \\&\ll(N^{\dagger}_{\mathbf{d}+l})^{1/2}\cdot \biggl(P^{-(\mathbf{d}+l+2)n}{\dsum_{|\mathbf{h}^{(l)}|\leq 2P}}^*\biggl|\int_{[0,1)^{n_l}}\Xi^{(l)}d\boldsymbol{\alpha}_{\mathfrak{F}_{\mathbf{d}+l}(l+1,d_1-1)}\biggr|^{2}\biggr)^{1/2}.
\end{aligned}
\end{equation}
By expanding the square, one sees that
\begin{equation}\label{squaring out0}
    \begin{aligned}
        &{\dsum_{|\mathbf{h}^{(l)}|\leq 2P}}^*\biggl|\int_{[0,1)^{n_l}}\Xi^{(l)}d\boldsymbol{\alpha}_{\mathfrak{F}_{\mathbf{d}+l}(l+1,d_1-1)}\biggr|^{2}\\
        &={\dsum_{|\mathbf{h}^{(l)}|\leq 2P}}^*\int_{[0,1)^{2n_l}}\sum_{\x_1,\x_2}e(\mathcal{P}_1(\x_1,\x_2))d\boldsymbol{\beta}_{\mathfrak{F}_{\mathbf{d}+l}(l+1,d_1-1)}d\boldsymbol{\alpha}_{\mathfrak{F}_{\mathbf{d}+l}(l+1,d_1-1)},
    \end{aligned}
\end{equation}
where
\begin{equation*}
    \begin{aligned}
        &\mathcal{P}_1(\x_1,\x_2)\\
        &=\alpha_2(F_2(\x_1;\mathbf{h}^{(l)})-F_2(\x_2;\mathbf{h}^{(l)}))+\dsum_{\mathbf{g}\in \mathfrak{F}_{\mathbf{d}+l}(l)}\alpha_{\mathbf{g}}(F_1(\x_1;\mathbf{h}^{(l)}_{\mathbf{g}})-F_1(\x_2;\mathbf{h}^{(l)}_{\mathbf{g}}))\\
        &\ \ \ \ \ \ \ \ \ \ \ \ \ \ \ \ \ \ \ \ \ +\left(\dsum_{l<s\leq d_1-1}\dsum_{\mathbf{g}\in \mathfrak{F}_{\mathbf{d}+l}(s)}\bigl(\alpha_{\mathbf{g}}F_1(\x_1;\mathbf{h}^{(l)}_{\mathbf{g}})-\beta_{\mathbf{g}}F_1(\x_2;\mathbf{h}^{(l)}_{\mathbf{g}})\bigr)\right).
    \end{aligned}
\end{equation*}
By change of variables
\begin{equation*}
    \begin{aligned}
           \x_1&=\x+\h_{\mathbf{d}+l+1}\\
        \x_2&=\x\\
        \alpha_{\mathbf{g}}&=\alpha_{\mathbf{g}}^{(1)}\\
        \beta_{\mathbf{g}}&=\alpha_{\mathbf{g}}^{(1)}-\alpha_{\mathbf{g}}^{(0)},\ \text{for all }\mathbf{f}\in \mathfrak{F}_{\mathbf{d}+l}(l+1,d_1-1),
    \end{aligned}
\end{equation*}
it follows from $(\ref{squaring out0})$ that
\begin{equation}\label{squaring out}
    \begin{aligned}
        &{\dsum_{|\mathbf{h}^{(l)}|\leq 2P}}^*\biggl|\int_{[0,1)^{n_l}}\Xi^{(l)}d\boldsymbol{\alpha}_{\mathfrak{F}_{\mathbf{d}+l}(l+1,d_1-1)}\biggr|^{2}\\
        &={\dsum_{|\mathbf{h}^{(l)}|\leq 2P}}^*\dsum_{\h_{\mathbf{d}+l+1}}\int_{[0,1)^{2n_l}}\sum_{\x}e(\mathcal{P}_2(\x,\h_{\mathbf{d}+l+1}))d\boldsymbol{\alpha}^{(0)}_{\mathfrak{F}_{\mathbf{d}+l}(l+1,d_1-1)}d\boldsymbol{\alpha}^{(1)}_{\mathfrak{F}_{\mathbf{d}+l}(l+1,d_1-1)},
    \end{aligned}
\end{equation}
where
\begin{equation*}
    \begin{aligned}
        &\mathcal{P}_2(\x,\h_{\mathbf{d}+l+1})\\
    &=\alpha_2(F_2(\x;\mathbf{h}^{(l)},\h_{\mathbf{d}+l+1})+\dsum_{\mathbf{g}\in \mathfrak{F}_{\mathbf{d}+l}(l)}\alpha_{\mathbf{g}}F_1(\x;\mathbf{h}^{(l)}_{\mathbf{g}},\h_{\mathbf{d}+l+1})\\
        &\ \ \ \ \ \ \ \ \ +\left(\dsum_{l<s\leq d_1-1}\dsum_{\mathbf{g}\in \mathfrak{F}_{\mathbf{d}+l}(s)}\bigl(\alpha_{\mathbf{g}}^{(1)}F_1(\x;\mathbf{h}^{(l)}_{\mathbf{g}},\h_{\mathbf{d}+l+1})+\alpha^{(0)}_{\mathbf{g}}F_1(\x;\mathbf{h}^{(l)}_{\mathbf{g}})\bigr)\right).
    \end{aligned}
\end{equation*}

By taking the integrations over $\alpha_{\mathbf{g}}^{(1)}$ with $\mathbf{g}\in \mathfrak{F}_{\mathbf{d}+l}(d_1-1)$, it follows from $(\ref{squaring out})$ that
\begin{equation}\label{squaring out2}
    \begin{aligned}
        &{\dsum_{|\mathbf{h}^{(l)}|\leq 2P}}^*\biggl|\int_{[0,1)^{n_l}}\Xi^{(l)}d\boldsymbol{\alpha}_{\mathfrak{F}_{\mathbf{d}+l}(l+1,d_1-1)}\biggr|^{2}\\
        &={\dsum_{|\mathbf{h}^{(l)}|\leq 2P}}^*\dsum_{\substack{\h_{\mathbf{d}+l+1}\\F_1(\x;\h_{\mathbf{g}}^{(l)},\h_{\mathbf{d}+l+1})=0\\ \text{for all}\ \mathbf{g}\in \mathfrak{F}_{\mathbf{d}+l}(d_1-1)} }\int_{[0,1)^{|\mathfrak{F}_{\mathbf{d}+l}(l+1)|}}\mathcal{I}\bigl(\boldsymbol{\alpha}^{(0)}_{\mathfrak{F}_{\mathbf{d}+l}(l+1)}\bigr)d\boldsymbol{\alpha}^{(0)}_{\mathfrak{F}_{\mathbf{d}+l}(l+1)},
    \end{aligned}
\end{equation}
where
\begin{equation}\label{I(alpha0)}
\mathcal{I}\bigl(\boldsymbol{\alpha}^{(0)}_{\mathfrak{F}_{\mathbf{d}+l}(l+1)}\bigr)= \int_{[0,1)^{2n_l-|\mathfrak{F}_{\mathbf{d}+l}(d_1-1)|-|\mathfrak{F}_{\mathbf{d}+l}(l+1)|}}\sum_{\x}e(\mathcal{P}_3(\x,\h_{\mathbf{d}+l+1}))d\boldsymbol{\alpha},
\end{equation}
in which 
\begin{equation*}
  d\boldsymbol{\alpha}=  d\boldsymbol{\alpha}^{(0)}_{\mathfrak{F}_{\mathbf{d}+l}(l+2,d_1-1)}d\boldsymbol{\alpha}^{(1)}_{\mathfrak{F}_{\mathbf{d}+l}(l+1,d_1-2)}
\end{equation*}
and 
\begin{equation}\label{mathcalP_3}
    \begin{aligned}
        &\mathcal{P}_3(\x,\h_{\mathbf{d}+l+1})\\
    &=\alpha_2F_2(\x;\mathbf{h}^{(l)},\h_{\mathbf{d}+l+1})\\
    &\ \ \ +\dsum_{\mathbf{g}\in \mathfrak{F}_{\mathbf{d}+l}(l)}\alpha_{\mathbf{g}}F_1(\x;\mathbf{h}^{(l)}_{\mathbf{g}},\h_{\mathbf{d}+l+1})+\dsum_{\mathbf{g}\in \mathfrak{F}_{\mathbf{d}+l}(l+1)}\alpha_{\mathbf{g}}^{(0)}F_1(\x;\h_{\mathbf{g}}^{(l)})\\ 
        &\ \ \ +\dsum_{l<s\leq d_1-2}\dsum_{\mathbf{g}\in \mathfrak{F}_{\mathbf{d}+l}(s)}\alpha_{\mathbf{g}}^{(1)}F_1(\x;\mathbf{h}^{(l)}_{\mathbf{g}},\h_{\mathbf{d}+l+1})\\
        &\ \ \ +\dsum_{l+1<s\leq d_1-1}\dsum_{\mathbf{g}\in \mathfrak{F}_{\mathbf{d}+l}(s)}\alpha^{(0)}_{\mathbf{g}}F_1(\x;\mathbf{h}^{(l)}_{\mathbf{g}}).
    \end{aligned}
\end{equation}
By taking the supremum over $\alpha_{\mathbf{g}}$ for all $\mathbf{g}\in \mathfrak{F}_{\mathbf{d}+l}(l)$ and $\alpha_{\mathbf{g}}^{(0)}$ for all $\mathbf{g}\in \mathfrak{F}_{\mathbf{d}+l}(l+1)$ in the right hand side of $(\ref{squaring out2})$, it follows from $(\ref{squaring out2})$ that
\begin{equation}\label{squaring out20}
    \begin{aligned}
        &{\dsum_{|\mathbf{h}^{(l)}|\leq 2P}}^*\biggl|\int_{[0,1)^{n_l}}\Xi^{(l)}d\boldsymbol{\alpha}_{\mathfrak{F}_{\mathbf{d}+l}(l+1,d_1-1)}\biggr|^{2}\\
        &=\int_{[0,1)^{|\mathfrak{F}_{\mathbf{d}+l}(l+1)|}}{\dsum_{|\mathbf{h}^{(l)}|\leq 2P}}^*\dsum_{\substack{\h_{\mathbf{d}+l+1}\\F_1(\x;\h_{\mathbf{g}}^{(l)},\h_{\mathbf{d}+l+1})=0\\ \text{for all}\ \mathbf{g}\in \mathfrak{F}_{\mathbf{d}+l}(d_1-1)} }\mathcal{I}\bigl(\boldsymbol{\alpha}^{(0)}_{\mathfrak{F}_{\mathbf{d}+l}(l+1)}\bigr)d\boldsymbol{\alpha}^{(0)}_{\mathfrak{F}_{\mathbf{d}+l}(l+1)}\\
        &\leq \sup_{\substack{\alpha_{\mathbf{g}}\in [0,1)\\ \text{for all }\mathbf{g}\in \mathfrak{F}_{\mathbf{d}+l}(l)}}\sup_{\substack{\alpha^{(0)}_{\mathbf{g}}\in [0,1)\\ \text{for all }\mathbf{g}\in \mathfrak{F}_{\mathbf{d}+l}(l+1)}}\biggl|{\dsum_{|\mathbf{h}^{(l)}|\leq 2P}}^*\dsum_{\substack{\h_{\mathbf{d}+l+1}\\F_1(\x;\h_{\mathbf{g}}^{(l)},\h_{\mathbf{d}+l+1})=0\\ \text{for all}\ \mathbf{g}\in \mathfrak{F}_{\mathbf{d}+l}(d_1-1)} }\mathcal{I}\bigl(\boldsymbol{\alpha}^{(0)}_{\mathfrak{F}_{\mathbf{d}+l}(l+1)}\bigr)\biggl|.
    \end{aligned}
\end{equation}

First, note that 
\begin{equation}\label{summation change}
    {\dsum_{|\mathbf{h}^{(l)}|\leq 2P}}^*\dsum_{\substack{\h_{\mathbf{d}+l+1}\\F_1(\x;\h_{\mathbf{g}}^{(l)},\h_{\mathbf{d}+l+1})=0\\ \text{for all}\ \mathbf{g}\in \mathfrak{F}_{\mathbf{d}+l}(d_1-1)} }={\dsum_{|\h^{(l+1)}|\leq 2P}}^*.
\end{equation}
Next, let $\mathbf{f}\in \mathfrak{F}_{\mathbf{d}+l+1}(s+1)$ with $l<s\leq d_1-2$ and $\mathbf{f}(\mathbf{d}+l+1)=1.$ We observe that there exists $\mathbf{g}\in \mathfrak{F}_{\mathbf{d}+l}(s)$ such that $\mathbf{g}(j)=\mathbf{f}(j)$ for all $1\leq j\leq \mathbf{d}+l.$ Hence, with these $\mathbf{f}$ and $\mathbf{g}$ above, by change of variable $\alpha_{\mathbf{g}}^{(1)}=\alpha_{\mathbf{f}}$, the term $\alpha_{\mathbf{g}}^{(1)}F_1(\x;\mathbf{h}^{(l)}_{\mathbf{g}},\h_{\mathbf{d}+l+1})$ in the summand of the  sum over $\mathbf{g}\in \mathfrak{F}_{\mathbf{d}+l}(s)$ in  $(\ref{mathcalP_3})$ is turned into $\alpha_{\mathbf{f}}F_1(\x;\mathbf{h}^{(l+1)}_{\mathbf{f}})$. Hence, by those change of variables $\alpha_{\mathbf{g}}^{(1)}=\alpha_{\mathbf{f}}$, one infers that the fourth sum over $\mathbf{g}\in \mathfrak{F}_{\mathbf{d}+l}(s)$ with $l<s\leq d_1-2$ in $(\ref{mathcalP_3})$ is seen to be
\begin{equation*}
    \dsum_{l<s\leq d_1-2}\dsum_{\substack{\mathbf{f}\in \mathfrak{F}_{\mathbf{d}+l+1}(s+1)\\ \mathbf{f}(\mathbf{d}+l+1)=1}}\alpha_{\mathbf{f}}F_1(\x;\mathbf{h}^{(l+1)}_{\mathbf{f}}),
\end{equation*}
and this is the same as 
\begin{equation}\label{partial sum 1}
    \dsum_{l+1<s\leq d_1-1}\dsum_{\substack{\mathbf{f}\in \mathfrak{F}_{\mathbf{d}+l+1}(s)\\ \mathbf{f}(\mathbf{d}+l+1)=1}}\alpha_{\mathbf{f}}F_1(\x;\mathbf{h}^{(l+1)}_{\mathbf{f}}),
\end{equation}
Similarly, let $\mathbf{f}\in \mathfrak{F}_{\mathbf{d}+l+1}(s)$ with $l+1<s\leq d_1-1$ and $\mathbf{f}(\mathbf{d}+l+1)=0.$ Then, there exists $\mathbf{g}\in \mathfrak{F}_{\mathbf{d}+l}(s)$ such that $\mathbf{g}(j)=\mathbf{f}(j)$ for all $1\leq j\leq \mathbf{d}+l.$ Hence, with these $\mathbf{f}$ and $\mathbf{g}$ above, by change of variable $\alpha_{\mathbf{g}}^{(0)}=\alpha_{\mathbf{f}}$, the term $\alpha_{\mathbf{g}}^{(0)}F_1(\x;\mathbf{h}^{(l)}_{\mathbf{g}})$ in the summand of the  sum over $\mathbf{g}\in \mathfrak{F}_{\mathbf{d}+l}(s)$ in  $(\ref{mathcalP_3})$ is turned into $\alpha_{\mathbf{f}}F_1(\x;\mathbf{h}^{(l+1)}_{\mathbf{f}})$. Hence, by those change of variables $\alpha_{\mathbf{g}}^{(0)}=\alpha_{\mathbf{f}}$, one infers that the fifth sum over $\mathbf{g}\in \mathfrak{F}_{\mathbf{d}+l}(s)$ with $l+1<s\leq d_1-1$ in $(\ref{mathcalP_3})$ is seen to be
\begin{equation}\label{partial sum 2}
    \dsum_{l+1<s\leq d_1-1}\dsum_{\substack{\mathbf{f}\in \mathfrak{F}_{\mathbf{d}+l+1}(s)\\ \mathbf{f}(\mathbf{d}+l+1)=0}}\alpha_{\mathbf{f}}F_1(\x;\mathbf{h}^{(l+1)}_{\mathbf{f}}).
\end{equation}
By substituting $(\ref{partial sum 1})$ and $(\ref{partial sum 2})$ into $(\ref{mathcalP_3})$ in place of the fourth and fifth sums, one finds that
\begin{equation}\label{mathcalP_3 2}
    \begin{aligned}
        &\mathcal{P}_3(\x,\h_{\mathbf{d}+l+1})\\
    &=\alpha_2F_2(\x;\mathbf{h}^{(l)},\h_{\mathbf{d}+l+1})\\
    &\ \ \ +\dsum_{\mathbf{g}\in \mathfrak{F}_{\mathbf{d}+l}(l)}\alpha_{\mathbf{g}}F_1(\x;\mathbf{h}^{(l)}_{\mathbf{g}},\h_{\mathbf{d}+l+1})+\dsum_{\mathbf{g}\in \mathfrak{F}_{\mathbf{d}+l}(l+1)}\alpha_{\mathbf{g}}^{(0)}F_1(\x;\h_{\mathbf{g}}^{(l)})\\ 
        &\ \ \ +\dsum_{l+1<s\leq d_1-1}\dsum_{\substack{\mathbf{f}\in \mathfrak{F}_{\mathbf{d}+l+1}(s)}}\alpha_{\mathbf{f}}F_1(\x;\mathbf{h}^{(l+1)}_{\mathbf{f}}).
    \end{aligned}
\end{equation}
In a similar manner, the second and third sums in (\ref{mathcalP_3 2}) can be replaced by 
\begin{equation*}
    \dsum_{\mathbf{f}\in \mathfrak{F}_{\mathbf{d}+l+1}(l+1)}\alpha_{\mathbf{f}}F_1(\x;\mathbf{h}^{(l+1)}_{\mathbf{f}}),
\end{equation*}
via a suitable change of variable 
\begin{equation*}
    \begin{aligned}
       \alpha_{\mathbf{g}}^{(0)}&= \alpha_{\mathbf{f}_1}\ \text{with}\ \mathbf{g}\in \mathfrak{F}_{\mathbf{d}+l}(l+1)\\
       \alpha_{\mathbf{g}}&= \alpha_{\mathbf{f}_2}\ \text{with}\ \mathbf{g}\in \mathfrak{F}_{\mathbf{d}+l}(l),
    \end{aligned}
\end{equation*}
for some $\mathbf{f}_1,\mathbf{f}_2\in \mathfrak{F}_{\mathbf{d}+l+1}(l+1)$ satisfying $\mathbf{f}_1(\mathbf{d}+l+1)=0$ and $\mathbf{f}_2(\mathbf{d}+l+1)=1.$ Therefore, it follows from $(\ref{mathcalP_3 2})$ is turned into
\begin{equation}\label{mathcalP_3 3}
    \begin{aligned}
\mathcal{P}_3(\x,\h_{\mathbf{d}+l+1})=\alpha_2F_2(\x;\mathbf{h}^{(l)},&\h_{\mathbf{d}+l+1})+\dsum_{\mathbf{f}\in \mathfrak{F}_{\mathbf{d}+l+1}(l+1)}\alpha_{\mathbf{f}}F_1(\x;\mathbf{h}^{(l+1)}_{\mathbf{f}})\\ 
        &+\dsum_{l+1<s\leq d_1-1}\dsum_{\substack{\mathbf{f}\in \mathfrak{F}_{\mathbf{d}+l+1}(s)}}\alpha_{\mathbf{f}}F_1(\x;\mathbf{h}^{(l+1)}_{\mathbf{f}}).
    \end{aligned}
\end{equation}
By substituting $(\ref{mathcalP_3 3})$ together with the variables changed into $(\ref{I(alpha0)})$ and that into $(\ref{squaring out20})$, we find that
\begin{equation}\label{squaring out21}
    \begin{aligned}
        &{\dsum_{|\mathbf{h}^{(l)}|\leq 2P}}^*\biggl|\int_{[0,1)^{n_l}}\Xi^{(l)}d\boldsymbol{\alpha}_{\mathfrak{F}_{\mathbf{d}+l}^*(l+1,d_1-1)}\biggr|^{2}\\
        & \leq \sup_{\substack{\alpha_{\mathbf{f}}\in [0,1)\\ \text{for all }\mathbf{f}\in \mathfrak{F}_{\mathbf{d}+l+1}(l+1)}}\biggl|{\dsum_{|\mathbf{h}^{(l)}|\leq 2P}}^*\dsum_{\substack{\h_{\mathbf{d}+l+1}\\F_1(\x;\h_{\mathbf{g}}^{(l)},\h_{\mathbf{d}+l+1})=0\\ \text{for all}\ \mathbf{g}\in \mathfrak{F}_{\mathbf{d}+l}(d_1-1)} }\mathcal{I}\bigl(\boldsymbol{\alpha}_{\mathfrak{F}_{\mathbf{d}+l+1}(l+1)}\bigr)\biggl|.
    \end{aligned}
\end{equation}
where
\begin{equation}\label{I(alpha01)}
\mathcal{I}\bigl(\boldsymbol{\alpha}_{\mathfrak{F}_{\mathbf{d}+l+1}(l+1)}\bigr)= \int_{[0,1)^{|\mathfrak{F}_{\mathbf{d}+l+1}(l+2, d_1-1)|}}\sum_{\x}e(\mathcal{P}_3(\x,\h_{\mathbf{d}+l+1}))d\boldsymbol{\alpha}_{\mathfrak{F}_{\mathbf{d}+l+1}(l+2, d_1-1)},
\end{equation}
in which 
\begin{equation}\label{mathcalP_31}
    \begin{aligned}
\mathcal{P}_3(\x,\h_{\mathbf{d}+l+1})=\alpha_2F_2(\x;\mathbf{h}^{(l)},&\h_{\mathbf{d}+l+1})+\dsum_{\mathbf{f}\in \mathfrak{F}_{\mathbf{d}+l+1}(l+1)}\alpha_{\mathbf{f}}F_1(\x;\mathbf{h}^{(l+1)}_{\mathbf{f}})\\ 
        &+\dsum_{l+1<s\leq d_1-1}\dsum_{\substack{\mathbf{f}\in \mathfrak{F}_{\mathbf{d}+l+1}(s)}}\alpha_{\mathbf{f}}F_1(\x;\mathbf{h}^{(l+1)}_{\mathbf{f}}).
    \end{aligned}
\end{equation}

Recall from (\ref{summation change}) that the sum ${\dsum_{|\mathbf{h}^{(l)}|\leq 2P}}^*\dsum_{\substack{\h_{\mathbf{d}+l+1}\\F_1(\x;\h_{\mathbf{g}}^{(l)},\h_{\mathbf{d}+l+1})=0\\ \text{for all}\ \mathbf{g}\in \mathfrak{F}_{\mathbf{d}+l}(d_1-1)} }$ in the right hand side of $(\ref{squaring out21})$ can be seen as ${\dsum_{|\mathbf{h}^{(l+1)}|\leq 2P}}^*$. Hence, on recalling the definition ($\ref{V^l}$) of $  V^{(l+1)}(\alpha_2, \boldsymbol{\alpha}_{\mathfrak{F}_{\mathbf{d}+l+1}(l+1)}),$ we infer by applying the triangle inequality that the right hand side is bounded above by
\begin{equation}\label{righthandside V^{l+1}}
   \sup_{\substack{\alpha_{\mathbf{g}}\in [0,1)\\ \text{for all }\mathbf{g}\in \mathfrak{F}_{\mathbf{d}+l+1}(l+1)}}P^{(\mathbf{d}+l+2)n}\bigl|V^{(l+1)}(\alpha_2, \boldsymbol{\alpha}_{\mathfrak{F}_{\mathbf{d}+l+1}(l+1)}) \bigr|.
\end{equation}
By substituting $(\ref{righthandside V^{l+1}})$ into $(\ref{squaring out21})$ and that into $(\ref{squaring out -1})$, we confirm the claim $(\ref{claim bound for V^l}).$
\end{proof}

By combining the conclusions in lemmas $\ref{lem3131}$ and $\ref{main lemma 2}$, one deduces $(\ref{goal in section 3.1}).$

\bigskip

\bigskip

\subsection{Estimates for the quantity $\mathcal{N}_j(P)$}\label{sec3.2}

Recall the definition $(\ref{N_i quantity definition})$ of the quantity $N_i^{\dagger}(P).$ 
In this subsection, we compute $$\mathcal{N}_{d_2-2}(P):=\prod_{i=1}^{d_2-2}(N_i^{\dagger}(P))^{2^{-i-1}}.$$
We are interested in a quantity \begin{equation}\label{definition of B}
  \mathfrak{B}:=\min\{\mathfrak{C},d_1\}>0,  
\end{equation} satisfying
\begin{equation}\label{definition of the bound for N_{d_2-2}}
    \mathcal{N}_{d_2-2}(P)\ll P^{-\mathfrak{C}+\epsilon},\ \text{as}\ P\rightarrow \infty.
\end{equation}
Since the expected magnitude of the quantity $  \mathcal{N}_{d_2-2}(P)$ is $\asymp P^{-d_1}$ as $d_2\rightarrow \infty$, we are only interested in $\mathfrak{C}\leq d_1.$

\bigskip

The primary goal is to prove Lemma $\ref{the first lemma in subsection 3.2}$. We shall prove this lemma at the end of this subsection. In advance of the statement of this lemma, we recall the definition $(\ref{definition of dimension of singular locus})$ of $B_1,$ and we put 
\begin{equation}\label{goal in sec 3.2}
     \mathfrak{D}:=d_1+(-1)^{d_1+1}\binom{d_2-2}{d_1}2^{-d_2+1}d_1(d_2-1)\int_0^1 x^{d_2-d_1-2}(x-2)^{d_1}dx.
\end{equation}
We notice here that $\mathfrak{D}<d_1.$

\begin{lem}\label{the first lemma in subsection 3.2}
 Whenever $n-B_1\geq \binom{d_2-2}{d_1}(d_1-1)2^{d_1},$  one has
   \begin{equation}\label{Goal in sec 3.2 2}
    \mathcal{N}_{d_2-2}(P)\ll P^{-\mathfrak{D}+\epsilon}.
\end{equation} 
\end{lem}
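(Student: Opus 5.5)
The plan is to bound each factor $N_i^{\dagger}(P)$ separately and then sum the exponents. The target is
\begin{equation*}
N_i^{\dagger}(P)\ll P^{-d_1\binom{i}{d_1}+\epsilon}\qquad (d_1\leq i\leq d_2-2),
\end{equation*}
together with the trivial $N_i^{\dagger}(P)=1$ for $i<d_1$. Granting this,
\begin{equation*}
\mathcal{N}_{d_2-2}(P)\ll P^{-d_1\sum_{i=d_1}^{d_2-2}\binom{i}{d_1}2^{-i-1}+\epsilon}.
\end{equation*}
Two facts then give (\ref{Goal in sec 3.2 2}):
\begin{itemize}
\item The full series satisfies $\sum_{i\geq d_1}\binom{i}{d_1}2^{-i-1}=1$, from $\sum_i\binom{i}{k}x^i=x^k(1-x)^{-k-1}$ at $x=1/2$. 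This is why the heuristic saving tends to $P^{-d_1}$.
\item The tail $\sum_{i\geq d_2-1}\binom{i}{d_1}2^{-i-1}$ should be rewritten in closed form as the Beta-type integral in (\ref{goal in sec 3.2}). I would get this by writing $2^{-i}=\int$-representations, or by the identity expressing a truncated negative-binomial sum as an incomplete Beta integral. This is a routine computation that I would check at the end so that $d_1\cdot(1-\text{tail})=\mathfrak{D}$.
\end{itemize}

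To bound $N_i^{\dagger}(P)$, note that it counts $\boldsymbol{\mathfrak{h}}\in[-2P,2P]^{in}$ on which the $R_i:=\binom{i}{d_1}$ multilinear forms $F_1(\boldsymbol{\mathfrak{h}}_{\mathbf{f}})$ all vanish, one for each $\mathbf{f}\in\mathfrak{F}_i(d_1)$. Up to the constant $d_1!$, each of these is the symmetric $d_1$-linear form attached to $F_1$, evaluated on a $d_1$-subset of the vectors $\h_1,\ldots,\h_i$. I would detect the equations by orthogonality:
\begin{equation*}
P^{in}N_i^{\dagger}(P)=\int_{[0,1)^{R_i}}\sum_{|\boldsymbol{\mathfrak{h}}|\leq 2P}e\Bigl(\sum_{\mathbf{f}\in\mathfrak{F}_i(d_1)}\alpha_{\mathbf{f}}F_1(\boldsymbol{\mathfrak{h}}_{\mathbf{f}})\Bigr)d\boldsymbol{\alpha}.
\end{equation*}
I would then run a circle-method argument in the $R_i$ variables $\alpha_{\mathbf{f}}$. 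For the major arcs, where every $\alpha_{\mathbf{f}}$ is close to a rational with small denominator, the trivial bound times the measure already gives $P^{in-d_1R_i+\epsilon}$. A lower-order count is all we need, so no asymptotic is required.

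For the minor arcs, suppose a given $\alpha_{\mathbf{f}}$ is not well approximable. I would fix all $\h_j$ with $j\notin\text{supp}(\mathbf{f})$ and sum over the $d_1$ vectors $\h_j$, $j\in\text{supp}(\mathbf{f})$. The phase is then $\alpha_{\mathbf{f}}$ times a $d_1$-linear form in these vectors, plus terms of lower multidegree in them, which a linear-form (Birch-type) Weyl bound handles uniformly. Birch's dichotomy for multilinear forms then applies. Either the sum over the free vectors is $\ll P^{d_1 n-\theta}$, or $\alpha_{\mathbf{f}}$ has a good rational approximation, or the $(d_1-1)$-tuples lie in a set whose size is governed by the dimension $B_1$ of the singular locus of $F_1$. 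This is [\ref{ref8}, Lemmas 2.1–2.4]. Each coordinate $\alpha_{\mathbf{f}}$ should cost a factor $(d_1-1)2^{d_1}$ in $n-B_1$. Combining these bounds over the $R_i$ forms by Hölder, or by a pigeonhole choice of the worst $\mathbf{f}$ together with a dyadic decomposition of the minor arcs, the minor-arc contribution is $\ll P^{in-d_1R_i-\delta}$ provided
\begin{equation*}
n-B_1>R_i(d_1-1)2^{d_1-1}\cdot 2.
\end{equation*}
Since $R_i\leq\binom{d_2-2}{d_1}$, this is exactly the hypothesis of the lemma. Possibly the borderline equality case needs an $\epsilon$, which the statement allows.

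The main obstacle is this minor-arc step, namely getting a requirement linear in $R_i$ rather than Birch's quadratic $R_i(R_i+1)$. The structural point to exploit is that distinct $\mathbf{f}$ involve distinct $d_1$-subsets of the $\h_j$, so the forms are ``almost separated''. My first attempt would be to bound the full sum by $\min_{\mathbf{f}}$ of the single-form bound and integrate each $\alpha_{\mathbf{f}}$ separately, applying Hölder with exponent $1/R_i$ per coordinate. If that loses too much, the fallback is an induction on $i$: count $\h_i$ given $\h_1,\ldots,\h_{i-1}$ through the $\binom{i-1}{d_1-1}$ linear conditions $F_1(\boldsymbol{\mathfrak{h}}_{\mathbf{g}},\h_i)=0$, and control the rank-deficient tuples with the same $B_1$-dimension bound.
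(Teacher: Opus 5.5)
Your proposal is correct and is essentially the paper's argument. The paper also proves $N_i^{\dagger}(P)\ll P^{-R_id_1+\epsilon}$ by orthogonality and a dyadic decomposition $\alpha_{\mathbf f}\in\mathfrak{N}_1(Q_{\mathbf f})$ in each coordinate. It then uses $d_1-1$ Cauchy--Schwarz steps that kill every term except $\alpha_{\mathbf f'}F_1(\h_1,\ldots,\h_{d_1})$, where $\mathbf f'$ is the index with the largest height, followed by a Birch-type bound $T\ll Q_{\mathbf f'}^{-(n-B_1)/(2^{d_1-1}(d_1-1))}P^{\epsilon}$ and $\prod_{\mathbf f}Q_{\mathbf f}^2\le Q_{\mathbf f'}^{2R_i}$. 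This is your pigeonhole option, and your $\min_{\mathbf f}$/H\"older first attempt gives the same linear-in-$R_i$ condition, so the fallback induction is not needed.

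The only difference is the final evaluation of $d_1\sum_{i=d_1}^{d_2-2}\binom{i}{d_1}2^{-i-1}$, which the paper does via a generating-function ODE. Your negative-binomial tail identity $\sum_{i\ge d_2-1}\binom{i}{d_1}2^{-i-1}=\Pr\bigl(\mathrm{Bin}(d_2-1,\tfrac12)\le d_1\bigr)=(d_2-1)\binom{d_2-2}{d_1}2^{1-d_2}\int_0^1x^{d_2-d_1-2}(2-x)^{d_1}\,dx$ is correct and yields $\mathfrak{D}$ equally well.
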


\begin{rmk}
    We conclude from $(\ref{goal in section 3.1})$ together with $(\ref{Goal in sec 3.2 2})$ that 
\begin{equation}\label{key minor arcs estimate}
     \int_{\mathcal{M}}\int_0^1 S(\boldsymbol{\alpha})d\boldsymbol{\alpha}\ll P^{-\mathfrak{D}+\epsilon}\cdot \text{mes}(\mathcal{M})\cdot \sup_{\alpha_2\in \mathcal{M}}\left|T^{(d_2-1)}(\boldsymbol{\alpha})\right|^{2^{-d_2+1}},
\end{equation}
 provided that $n-B_1\geq \binom{d_2-2}{d_1} (d_1-1)2^{d_1}.$ 
\end{rmk}
\begin{rmk}\label{remark 5}
For every pair of integers $d_1\geq2$ and $d_2\geq5d_1$, one has
$\mathfrak D>d_1-1$.
For this calculation write $r=d_1$ and $s=d_2$, and put
\[
G(r,s)=r(s-1)\binom{s-2}{r}2^{1-s}.
\]
By (\ref{goal in sec 3.2}),
\begin{equation}\label{modicum computation}
0<r-\mathfrak D
=G(r,s)\int_0^1 x^{s-r-2}(2-x)^r\,dx
\leq G(r,s).
\end{equation}
Indeed, since $s-r-2\geq r$ and $0\leq x(2-x)\leq1$
for $x\in[0,1]$, we have
\[
0\leq x^{s-r-2}(2-x)^r
\leq \bigl(x(2-x)\bigr)^r
\leq1.
\]

For fixed $r\geq2$ and $s\geq5r$, one has
\[
\frac{G(r,s+1)}{G(r,s)}
=\frac{s}{2(s-r-1)}<1.
\]
Thus it suffices to bound $G(r,5r)$.
A direct computation gives
\[
G(2,10)=\frac{63}{64},
\]
while
\[
\frac{G(r+1,5r+5)}{G(r,5r)}
=\frac{5}{32}\prod_{j=1}^{4}
  \frac{5r+j}{4r+j-2}
\leq\frac{143}{192}<1
\qquad(r\geq2).
\]
To verify the last inequality, observe that, for $1\leq j\leq4$,
\[
\frac{d}{dr}\left(\frac{5r+j}{4r+j-2}\right)
=\frac{j-10}{(4r+j-2)^2}<0.
\]
Hence the product is decreasing in $r$, and its value,
including the factor $5/32$, at $r=2$ is $143/192$.
Consequently,
\[
G(r,s)\leq G(r,5r)\leq G(2,10)
=\frac{63}{64}<1.
\]
Combining this with (\ref{modicum computation}), we obtain
\[
\mathfrak D>r-1=d_1-1
\]
for every $d_1\geq2$ and $d_2\geq5d_1$.
\end{rmk}

\bigskip

In order to prove Lemma $\ref{the first lemma in subsection 3.2}$, we require an auxiliary lemma which provides an upper bound for the quantity $N_i^{\dagger}(P).$ 
\begin{lem}\label{lemma 3.3}
Let $R_i=\binom{i}{d_1}$ with $i\geq d_1.$  Whenever $n-B_1\geq R_i (d_1-1)2^{d_1},$ one has
    \begin{equation}
        N_i^{\dagger}(P)\ll P^{-R_i d_1+\epsilon}.
    \end{equation}
\end{lem}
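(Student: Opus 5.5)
We need to bound the number of $\boldsymbol{\mathfrak{h}}=(\h_1,\ldots,\h_i)\in[-2P,2P]^{in}\cap\Z^{in}$ with $F_1(\boldsymbol{\mathfrak{h}}_{\mathbf{f}})=0$ for every $\mathbf{f}\in\mathfrak{F}_i(d_1)$. The plan is to view this as a system of $R_i=\binom{i}{d_1}$ forms of degree $d_1$ in the $in$ variables $\boldsymbol{\mathfrak{h}}$. Each is a multilinear form $F_1(\h_{j_1},\ldots,\h_{j_{d_1}})$: it is $d_1!$ times the polarization of $F_1$ evaluated at $d_1$ distinct blocks. We then apply Birch's circle method for systems of forms, in the form of an upper bound rather than an asymptotic. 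Writing the count as
\begin{equation*}
\int_{[0,1)^{R_i}}\dsum_{|\boldsymbol{\mathfrak{h}}|\leq 2P}e\Bigl(\dsum_{\mathbf{f}\in\mathfrak{F}_i(d_1)}\gamma_{\mathbf{f}}F_1(\boldsymbol{\mathfrak{h}}_{\mathbf{f}})\Bigr)d\boldsymbol{\gamma},
\end{equation*}
the expected size is $P^{in-R_id_1}$. We only need an upper bound $\ll P^{in-R_id_1+\epsilon}$, which gives $N_i^\dagger(P)\ll P^{-R_id_1+\epsilon}$ after normalizing.

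The key steps, in order, are as follows.

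\textbf{Step 1: Weyl inequality.} Perform $d_1-1$ Weyl differencing steps on the exponential sum over $\boldsymbol{\mathfrak{h}}$ with generic coefficients $\boldsymbol{\gamma}$. This reduces it to counting $(d_1-1)$-tuples $(\boldsymbol{\mathfrak{y}}^{(1)},\ldots,\boldsymbol{\mathfrak{y}}^{(d_1-1)})$ with $\|\sum_{\mathbf{f}}\gamma_{\mathbf{f}}\,\partial(\cdots)\|$ small, in the usual Birch fashion. This yields the dichotomy: either $|\text{sum}|\ll P^{in-k\theta}$, or $\boldsymbol{\gamma}$ lies in a major arc of level $P^{(d_1-1)\theta}$, or the singular-locus count is large.

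\textbf{Step 2: bound the relevant singular locus.} The locus in question is that of the system $\{F_1(\boldsymbol{\mathfrak{h}}_{\mathbf{f}})\}$, that is, the $\boldsymbol{\mathfrak{h}}$ for which the $R_i\times in$ Jacobian has rank $<R_i$. Because the forms are multilinear in disjoint-ish blocks, a nonzero combination $\sum c_{\mathbf{f}}\nabla F_1(\boldsymbol{\mathfrak{h}}_{\mathbf{f}})$ vanishing forces, in the block of some $\h_j$, a nonzero combination of the vectors $\nabla_{\h_j}F_1(\ldots)$ to vanish. I would show the dimension of this locus is at most $(i-1)n+B_1$, or something comparable like $in-(n-B_1)$. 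The key input is that $\nabla F_1(\h,\ldots,\h)$ controls these polarized gradients: specialize to the diagonal-type fibre, or use that $\{\h:\ \text{multilinear form in }\h\text{ degenerate}\}$ has codimension $\ge n-B_1$. This gives Birch's quantity $K=(in-\dim V^*)/2^{d_1-1}\geq (n-B_1)/2^{d_1-1}$.

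\textbf{Step 3: apply Birch's condition.} Birch's method for $R$ forms of degree $d$ needs $K>R(R+1)(d-1)$. Since we only need an upper bound with $\epsilon$-loss, and the forms have a product structure in the $\gamma$, I would instead use a cruder argument giving the requirement $K\geq 2R(d_1-1)$. One such argument is the Browning--Heath-Brown or Rydin Myerson type bound "minor arc contribution $\ll P^{in-Rd}$ when $K>$ const$\cdot R$." Another is a direct Davenport-type argument treating the $\gamma$-integral via $\sup$ on minor arcs and a measure bound on major arcs of total measure $\ll P^{-Rd+\epsilon}$. The hypothesis $n-B_1\geq R_i(d_1-1)2^{d_1}$ matches exactly $K\geq 2R_i(d_1-1)$, which is precisely what makes the minor arcs contribute $O(P^{in-R_id_1+\epsilon})$ and the major arcs trivially bounded by $P^{in}\cdot\text{mes}\ll P^{in-R_id_1+\epsilon}$.

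The main obstacle is Step 2, computing the singular locus of the polarized system, together with obtaining a linear-in-$R$ threshold in Step 3. For the latter I would use the bound $\int|S(\boldsymbol\gamma)|d\boldsymbol\gamma$ with the pointwise Weyl bound $|S|\ll P^{in}(\text{dist to rationals})^{-K/(d_1-1)+\epsilon}$. Integrating this over $\R^{R}$ converges with $\epsilon$-loss once $K/(d_1-1)\geq 2R$ (roughly, each $\gamma$-coordinate's rational approximation denominators and distances summed), a dyadic-shell argument as in Browning--Heath-Brown, Lemma 4.3.
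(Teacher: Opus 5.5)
Your overall frame is the same as the paper's: write $N_i^\dagger(P)=\int_{[0,1)^{R_i}}T$, bound $|T|$ pointwise, and sum over dyadic shells against a measure bound. The central step, however, is missing.

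A Birch-type Weyl inequality for the whole system of $R_i$ forms, as in your Step 1, does not produce major arcs of level $P^{(d_1-1)\theta}$. It produces a simultaneous approximation $\|q\gamma_{\mathbf f}\|\le P^{-d_1+R_i(d_1-1)\theta}$ with a \emph{common} denominator $q\le P^{R_i(d_1-1)\theta}$, because the denominator comes from an $R_i\times R_i$ minor. That gives $|T|\ll Q^{-K/(R_i(d_1-1))+\epsilon}$ against major arcs of measure $Q^{R_i+1}P^{-R_id_1}$. The resulting requirement has the shape $K\ge R_i(R_i+1)(d_1-1)$, which exceeds $n-B_1\ge R_i(d_1-1)2^{d_1}$ as soon as $R_i\ge 2$.

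The linear threshold in your Step 3 needs exactly the bound you assert without proof. Namely: if a single coordinate $\gamma_{\mathbf f}$ lies outside $\mathfrak M_1(Q)$, then $T\ll Q^{-(n-B_1)/(2^{d_1-1}(d_1-1))+\epsilon}$, uniformly in the other coordinates. Nothing in Birch's method supplies this for a general system, nor do the Browning--Heath-Brown or Rydin Myerson results you cite.

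The missing idea is to use the block-multilinear structure to isolate one form.
\begin{itemize}
\item Decompose $[0,1)^{R_i}$ into products $\prod_{\mathbf f}\bigl(\mathfrak M_1(2Q_{\mathbf f})\setminus\mathfrak M_1(Q_{\mathbf f})\bigr)$. Relabel the blocks so that the largest $Q_{\mathbf f}$ occurs at $\mathbf f'$ with support $\{1,\ldots,d_1\}$.
\item Every $F_1(\boldsymbol{\mathfrak h}_{\mathbf f})$ is linear in each block. So moving $\h_2,\ldots,\h_i$ outside the absolute value discards the forms with $\mathbf f(1)=0$.
\item Cauchy--Schwarz in $\h_1$ then replaces $\h_1$ by a difference variable with the same phase, at the cost of a harmless weight.
\item Repeating this with $\h_2,\ldots,\h_{d_1-1}$, and finally putting $\h_{d_1}$ innermost, leaves only $\alpha_{\mathbf f'}F_1(\h_1,\ldots,\h_{d_1})$. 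Thus $T$ is bounded by the $2^{1-d_1}$-th power of a fully differenced Weyl sum for the single form $F_1$.
\item The one-form Weyl inequality now gives the bound above with $Q=Q_{\mathbf f'}$. Combined with $\mathrm{mes}\ll P^{-d_1R_i}\prod_{\mathbf f}Q_{\mathbf f}^2\le P^{-d_1R_i}Q_{\mathbf f'}^{2R_i}$, this is exactly where the hypothesis $n-B_1\ge R_i(d_1-1)2^{d_1}$ enters.
\end{itemize}

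This route also makes your Step 2 unnecessary, since only the singular locus of $F_1$ itself is used. By contrast, your codimension claim for the rank-deficient locus of the polarized system is asserted rather than proved.
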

\begin{proof}
For $\boldsymbol{\alpha}_{\mathfrak{F}_i(d_1)}=(\alpha_{\mathbf{f}})_{\mathbf{f}\in \mathfrak{F}_i(d_1)},$  we  define the exponential sum
    \begin{equation*}
T(\boldsymbol{\alpha}_{\mathfrak{F}_i(d_1)}):=P^{-in}\dsum_{|\boldsymbol{\mathfrak{h}}|\leq 2P}e\biggl(\dsum_{\mathbf{f}\in \mathfrak{F}_i(d_1)}\alpha_{\mathbf{f}} F_1(\boldsymbol{\mathfrak{h}_{\mathbf{f}}})\biggr).
    \end{equation*}    
     Notice that $|\mathfrak{F}_i(d_1)|=R_i.$
    On recalling the notation $(\ref{integration notation})$ together with the definition $(\ref{N_i quantity definition})$ of the quantity $N_i^{\dagger}(P)$, we note that 
    \begin{equation}\label{target mean value 1}
        N_i^{\dagger}(P)=\int_{[0,1]^{R_i}}T(\boldsymbol{\alpha}_{\mathfrak{F}_i(d_1)})d\boldsymbol{\alpha}_{\mathfrak{F}_i(d_1)}.
    \end{equation}

Recall the definition $(\ref{majorarcs2})$ of $\mathfrak{M}_1(Q)$ with $Q>0.$ We temporarily define $$\mathfrak{N}_1(Q):=\mathfrak{M}_1(2Q)\setminus \mathfrak{M}_1(Q),$$ and define
\begin{equation*}
    \mathfrak{N}(\boldsymbol{Q}_{\mathfrak{F}_i(d_1)}):=\prod_{\mathbf{f}\in \mathfrak{F}_i(d_1)}\mathfrak{N}_1(Q_{\mathbf{f}}),
\end{equation*}
with $\boldsymbol{Q}_{\mathfrak{F}_i(d_1)}=(Q_{\mathbf{f}})_{\mathbf{f}\in \mathfrak{F}_i(d_1)}.$
Consider the mean value of exponential sums
\begin{equation*}
\int_{\mathfrak{N}(\boldsymbol{Q}_{\mathfrak{F}_i(d_1)})}T(\boldsymbol{\alpha}_{\mathfrak{F}_i(d_1)})d\boldsymbol{\alpha}_{\mathfrak{F}_i(d_1)}.
\end{equation*}
We claim that whenever $n-B_1\geq R_i(d_1-1)2^{d_1},$ one has
\begin{equation}\label{claim pruning}
    \int_{\mathfrak{N}(\boldsymbol{Q}_{\mathfrak{F}_i(d_1)})}T(\boldsymbol{\alpha}_{\mathfrak{F}_i(d_1)})d\boldsymbol{\alpha}_{\mathfrak{F}_i(d_1)}\ll P^{-R_id_1+\epsilon},
\end{equation}
 with $0\leq Q_{\mathbf{f}}\leq P^{d_1-1}$.
By applying  the pruning argument, one infers by $(\ref{claim pruning})$  that whenever $n-B_1\geq R_i(d_1-1)2^{d_1}$ one has
\begin{equation*}
\int_{[0,1]^{R_i}}T(\boldsymbol{\alpha}_{\mathfrak{F}_i(d_1)})d\boldsymbol{\alpha}_{\mathfrak{F}_i(d_1)}\ll P^{-R_id_1+\epsilon}.
\end{equation*}
Hence, by substituting this into $(\ref{target mean value 1})$, it completes the proof of Lemma $\ref{lemma 3.3}.$ 
Therefore, it suffices to confirm the claim $(\ref{claim pruning})$. Let $\mathbf{f}'$ be a function in $\mathfrak{F}_i(d_1)$ such that $\mathbf{f}'(l)=1$ for all $1\leq l\leq d_1.$ Without loss of generality, we assume that 
\begin{equation}\label{Qf'}
    Q_{\mathbf{f}'}=\max_{\mathbf{f}\in \mathfrak{F}_i(d_1)}Q_{\mathbf{f}}.
\end{equation}

Recall the notation $\boldsymbol{\mathfrak{h}}=(\h_j)_{j\in \Z_i}$. It follows by applying the triangle inequality that 
\begin{equation}\label{triangle and Cauchy-Schwarz}
   \begin{aligned}
T(\boldsymbol{\alpha}_{\mathfrak{F}_i(d_1)})&\ll  P^{-in} \dsum_{\substack{|\h_j|\leq 2P\\ j\in \Z_i\setminus\{1\}}}\biggl|\dsum_{|\h_1|\leq 2P}e\biggl(\dsum_{\mathbf{f}\in \mathfrak{F}_i(d_1)}\alpha_{\mathbf{f}} F_1(\boldsymbol{\mathfrak{h}_{\mathbf{f}}})\biggr)\biggr|\\
&= P^{-in} \dsum_{\substack{|\h_j|\leq 2P\\ j\in \Z_i\setminus\{1\}}}\biggl|\dsum_{|\h_1|\leq 2P}e\biggl(\dsum_{\substack{\mathbf{f}\in \mathfrak{F}_i(d_1)\\ \mathbf{f}(1)=1}}\alpha_{\mathbf{f}} F_1(\boldsymbol{\mathfrak{h}_{\mathbf{f}}})\biggr)\biggr|
   \end{aligned} 
\end{equation}
By applying the Cauchy-Schwarz inequality and the standard Weyl differencing argument, one deduces from above that 
\begin{equation}\label{3.43}
\begin{aligned}
T(\boldsymbol{\alpha}_{\mathfrak{F}_i(d_1)})&\ll
  P^{-(i+1)n/2}\biggl(\dsum_{\substack{|\h_j|\leq 2P\\ j\in \Z_i\setminus\{1\}}}\biggl|\dsum_{|\h_1|\leq 2P}e\biggl(\dsum_{\substack{\mathbf{f}\in \mathfrak{F}_i(d_1)\\ \mathbf{f}(1)=1}}\alpha_{\mathbf{f}} F_1(\boldsymbol{\mathfrak{h}_{\mathbf{f}}})\biggr)\biggr|^2\biggr)^{1/2}\\
  &=P^{-(i+1)n/2}\biggl(\dsum_{\substack{|\h_j|\leq 2P\\ j\in \Z_i\setminus\{1\}}}\dsum_{|\h_1|\leq 4P }a_{\h_1}e\biggl(\dsum_{\substack{\mathbf{f}\in \mathfrak{F}_i(d_1)\\ \mathbf{f}(1)=1}}\alpha_{\mathbf{f}} F_1(\boldsymbol{\mathfrak{h}_{\mathbf{f}}})\biggr)\biggr)^{1/2},
  \end{aligned}
\end{equation}
where $$a_{\h_1}=\#\{\x\in  \Z^n:\ \x\in [-2P,2P]^n\cap \Z^n,\ \x+\h_1\in [-2P,2P]^n\cap \Z^n\}.$$

Notice that $|a_{\h_1}|\ll P^n$, and that the weight $a_{\h_1}$ does not depend on $\h_2,\ldots,\h_i.$ With this in mind, by changing the order of summations and applying the triangle inequality, we deduce from ($\ref{3.43}$) that
\begin{equation}\label{1 bound for T(alpha)}
\begin{aligned}
T(\boldsymbol{\alpha}_{\mathfrak{F}_i(d_1)})&\ll P^{-in/2}\biggl( \dsum_{|\h_1|\leq 4P }\dsum_{\substack{|\h_j|\leq 2P\\ j\in \Z_i\setminus\{1,2\}}}\biggl|\dsum_{|\h_2|\leq 2P}e\biggl(\dsum_{\substack{\mathbf{f}\in \mathfrak{F}_i(d_1)\\ \mathbf{f}(1)=1}}\alpha_{\mathbf{f}} F_1(\boldsymbol{\mathfrak{h}_{\mathbf{f}}})\biggr)\biggr|\biggr)^{1/2}\\
&\leq P^{-in/2}\biggl( \dsum_{\substack{|\h_j|\leq 4P\\ j\in \Z_i\setminus\{2\}}}\biggl|\dsum_{|\h_2|\leq 2P}e\biggl(\dsum_{\substack{\mathbf{f}\in \mathfrak{F}_i(d_1)\\ \mathbf{f}(1)=1}}\alpha_{\mathbf{f}} F_1(\boldsymbol{\mathfrak{h}_{\mathbf{f}}})\biggr)\biggr|\biggr)^{1/2}\\
&= P^{-in/2}\biggl( \dsum_{\substack{|\h_j|\leq 4P\\ j\in \Z_i\setminus\{2\}}}\biggl|\dsum_{|\h_2|\leq 2P}e\biggl(\dsum_{\substack{\mathbf{f}\in \mathfrak{F}_i(d_1)\\ \mathbf{f}(l)=1\\\text{for }l=1,2}}\alpha_{\mathbf{f}} F_1(\boldsymbol{\mathfrak{h}_{\mathbf{f}}})\biggr)\biggr|\biggr)^{1/2}.
\end{aligned}
\end{equation}
By repeating the arguments leading from the last expression in ($\ref{triangle and Cauchy-Schwarz}$) to the last expression in $(\ref{1 bound for T(alpha)})$, one infers that
\begin{equation*}
T(\boldsymbol{\alpha}_{\mathfrak{F}_i(d_1)})\ll P^{-2^{-d_1+1}\cdot in }\biggl( \dsum_{\substack{|\h_j|\leq 2^{d_1}P\\ j\in \Z_i\setminus\{d_1\}}}\biggl|\dsum_{|\h_{d_1}|\leq 2P}e\biggl(\dsum_{\substack{\mathbf{f}\in \mathfrak{F}_i(d_1)\\ \mathbf{f}(l)=1\\\text{for }1\leq l\leq d_1}}\alpha_{\mathbf{f}} F_1(\boldsymbol{\mathfrak{h}_{\mathbf{f}}})\biggr)\biggr|\biggr)^{2^{-d_1+1}}.
\end{equation*}
On recalling the definition of $\mathbf{f}'$ described in the preamble to $(\ref{Qf'})$, the last expression is seen to be
\begin{equation*}
T(\boldsymbol{\alpha}_{\mathfrak{F}_i(d_1)})\ll P^{-2^{-d_1+1}\cdot in }\biggl( \dsum_{\substack{|\h_j|\leq 2^{d_1}P\\ j\in \Z_i\setminus\{d_1\}}}\biggl|\dsum_{|\h_{d_1}|\leq 2P}e\bigl(\alpha_{\mathbf{f}'} F_1(\h_1,\h_2,\ldots,\h_{d_1})\bigr)\biggr|\biggr)^{2^{-d_1+1}}.
\end{equation*}
Then, whenever $\alpha_{\mathbf{f}'}\in \mathfrak{N}_1(Q_{\mathbf{f}'})$ we infer by [$\ref{ref20}$, Lemma 3.4] that 
\begin{equation}
T(\boldsymbol{\alpha}_{\mathfrak{F}_i(d_1)})\ll Q_{\mathbf{f}'}^{-\frac{n-B_1}{2^{d_1-1}(d_1-1)}}P^{\epsilon}.
\end{equation}
On noting that $$\text{mes}(\mathfrak{N}(\boldsymbol{Q}_{\mathfrak{F}_i(d_1)}))\ll P^{-d_1R_i}\prod_{\mathbf{f}\in\mathfrak{F}_i(d_1)}Q_{\mathbf{f}}^2,$$ 
we deduce that
\begin{equation}\label{2 bound for mean value of Talpha}
\begin{aligned}
    \int_{\mathfrak{N}(\boldsymbol{Q}_{\mathfrak{F}_i(d_1)})}T(\boldsymbol{\alpha}_{\mathfrak{F}_i(d_1)})d\boldsymbol{\alpha}_{\mathfrak{F}_i(d_1)}&\ll P^{-d_1R_i+\epsilon}Q_{\mathbf{f}'}^{-\frac{n-B_1}{2^{d_1-1}(d_1-1)}}\prod_{\mathbf{f}\in\mathfrak{F}_i(d_1)}Q_{\mathbf{f}}^2\\
    & \leq P^{-d_1R_i+\epsilon}Q_{\mathbf{f}'}^{2R_i-\frac{n-B_1}{2^{d_1-1}(d_1-1)}},
\end{aligned}
\end{equation}
where we have used the assumption $(\ref{Qf'}).$ Therefore, one sees from $(\ref{2 bound for mean value of Talpha})$ that 
\begin{equation*}
     \int_{\mathfrak{N}(\boldsymbol{Q}_{\mathfrak{F}_i(d_1)})}T(\boldsymbol{\alpha}_{\mathfrak{F}_i(d_1)})d\boldsymbol{\alpha}_{\mathfrak{F}_i(d_1)}\ll P^{-d_1R_i+\epsilon},
\end{equation*}
provided that $n-B_1 \geq R_i (d_1-1)2^{d_1}.$
\end{proof}

\bigskip

\begin{proof}[Proof of Lemma 3.3]

Hence, by Lemma $\ref{lemma 3.3}$ whenever $n-B_1 \geq R_i (d_1-1)2^{d_1},$ one has 
\begin{equation*}
    N^{\dagger}_i(P)\ll P^{-R_id_1+\epsilon}.
\end{equation*}
Since 
\begin{equation*}
    \mathcal{N}_{d_2-2}(P)=\prod_{i=1}^{d_2-2}(N^{\dagger}_i(P))^{2^{-i-1}},
\end{equation*}
whenever $n-B_1 \geq R_{d_2-2} (d_1-1)2^{d_1},$ one has
\begin{equation}\label{main goal this subsection}
     \mathcal{N}_{d_2-2}(P)\ll P^{-\mathfrak{D}_0+\epsilon},
\end{equation}
with $$\mathfrak{D}_0=d_1\dsum_{i=1}^{d_2-2}R_i2^{-i-1}.$$

On writing 
\begin{equation}\label{a_i definition}
    a_i:= \binom{d_1-1+i}{d_1}\cdot2^{-d_1-i},
\end{equation} 
we see that 
$$\mathfrak{D}_0=d_1\dsum_{i=1}^{d_2-d_1-1}a_i.$$
In order to verify $(\ref{goal in sec 3.2})$ by (\ref{main goal this subsection}), it is enough to show that
\begin{equation}\label{claimdldi}
   \mathfrak{D}_0=\mathfrak{D}.
\end{equation}

 Define a generating function
\begin{equation}\label{f(x)}
    f(x)=\dsum_{i=1}^{d_2-d_1-1}a_ix^{i-1}.
\end{equation}
Consider 
\begin{equation}\label{generating function}
\begin{aligned}
    (2-x)f(x)&=\dsum_{i=1}^{d_2-d_1-1}(2-x)a_ix^{i-1}\\
    &=2a_1+\dsum_{i=2}^{d_2-d_1-1}(2a_i-a_{i-1})x^{i-1}-a_{d_2-d_1-1}x^{d_2-d_1-1}.
\end{aligned}
\end{equation}
Note that whenever $i\geq2$, one has
\begin{equation*}
\begin{aligned}
  2a_i-a_{i-1}&= 2^{-d_1-i+1}\biggl(\binom{d_1-1+i}{d_1}-\binom{d_1-2+i}{d_1}\biggr)\\
  &=2^{-d_1-i+1}\cdot\frac{d_1}{i-1}\cdot\binom{d_1-2+i}{d_1}\\
  &=d_1\cdot \frac{a_{i-1}}{i-1}.
\end{aligned}
\end{equation*}
By taking differentiation on both sides of $(\ref{generating function}),$ one has
\begin{equation*}
\begin{aligned}
    &-f(x)+(2-x)f'(x)\\
    &=\dsum_{i=2}^{d_2-d_1-1}d_1\cdot a_{i-1}\cdot x^{i-2}-a_{d_2-d_1-1}(d_2-d_1-1)x^{d_2-d_1-2}\\
    &=\dsum_{i=1}^{d_2-d_1-2}d_1\cdot a_{i}\cdot x^{i-1}-a_{d_2-d_1-1}(d_2-d_1-1)x^{d_2-d_1-2}\\
    &=d_1\cdot(f(x)-a_{d_2-d_1-1}x^{d_2-d_1-2})-a_{d_2-d_1-1}(d_2-d_1-1)x^{d_2-d_1-2}\\
    &=d_1 f(x)-a_{d_2-d_1-1}(d_2-1)x^{d_2-d_1-2}.
\end{aligned}
\end{equation*}
Therefore, we have
\begin{equation*}
    (x-2)f'(x)+(d_1+1)f(x)=a_{d_2-d_1-1}(d_2-1)x^{d_2-d_1-2}.
\end{equation*}
This yields that
\begin{equation*}
    ((x-2)^{d_1+1}f(x))'=a_{d_2-d_1-1}(d_2-1)x^{d_2-d_1-2}(x-2)^{d_1},
\end{equation*}
and thus by taking integration over $[0,1]$, one has
\begin{equation*}
    (-1)^{d_1+1}f(1)-(-2)^{d_1+1}f(0)=a_{d_2-d_1-1}(d_2-1)\int_0^1 x^{d_2-d_1-2}(x-2)^{d_1}dx.
\end{equation*}
It follows from $(\ref{a_i definition})$ and $(\ref{f(x)})$ that $f(0)=2^{-d_1-1}.$ Hence, by the definition $(\ref{a_i definition})$  of $a_i$, one has
\begin{equation*}
\begin{aligned}
    f(1)&=1+(-1)^{d_1+1}a_{d_2-d_1-1}(d_2-1)\int_0^1 x^{d_2-d_1-2}(x-2)^{d_1}dx\\
    &=1+(-1)^{d_1+1}\binom{d_2-2}{d_1}2^{-d_2+1}(d_2-1)\int_0^1 x^{d_2-d_1-2}(x-2)^{d_1}dx.
\end{aligned}
\end{equation*}
On noting that $\mathfrak{D}_0=d_1f(1)$, we conclude that
\begin{equation}\label{mathfrakD}
    \mathfrak{D}_0=d_1+(-1)^{d_1+1}\binom{d_2-2}{d_1}2^{-d_2+1}d_1(d_2-1)\int_0^1 x^{d_2-d_1-2}(x-2)^{d_1}dx.
\end{equation}
Since the right hand side is $\mathfrak{D},$ this confirms the inequality $(\ref{claimdldi}).$ Thus it follows from ($\ref{main goal this subsection}$)  that the inequality $(\ref{Goal in sec 3.2 2})$ holds.
\end{proof}

\bigskip

\subsection{Estimates for exponential sums}\label{subsec3.3}


In this subsection, we shall provide lemmas on a relation between bounds for exponential sums and information on $\boldsymbol{\alpha}. $  By combining these with the discussion in the previous section, we provide the proof of the main theorem in section $\ref{sec4}$. We note that the lemmas in this section are essentially the same lemmas in [$\ref{ref9}$, section 4 and 5]. However, lemmas here are described in another way so that these could be easily combined with the lemmas described in the previous section. More precisely, lemmas in [$\ref{ref9}$, section 4 and 5] are stated in terms of the magnitude of exponential sums.  Meanwhile, in this section, we interpret those in terms of height of major arcs. Our hope is that this interpretation may be helpful in applying other techniques to make further improvement. It is worth noting that the lemmas themselves in this section do not give any quantitative improvement on the conclusion in  [$\ref{ref9}$].




In advance of the statement of the first lemma in this section, we let $\mathfrak{f}(\x)$ and $\mathfrak{g}(\x)$ be polynomials with real coefficients in $n$ variables. Suppose that $\mathfrak{f}$ is of degree $d$, and let $\mathfrak{F}(\x)$ be the leading form of degree $d.$ We write $\mathfrak{F}(\x_1,\ldots,\x_d)$ for $d$-linear polar form of $\mathfrak{F}(\x)$ multiplied by $d!$, and put $\mathfrak{F}(\x_1,\ldots,\x_d)=\underline{\mathfrak{F}}(\x_1,\ldots,\x_{d-1})\cdot \x_d$. Let $\mathfrak{F}_i$ be the $i$-th component of the vector $\underline{\mathfrak{F}}(\x_1,\ldots,\x_{d-1}).$  Suppose that $\mathfrak{g}(\x)=q^{-1}\mathfrak{g}_1(\x)+\mathfrak{g}_2(\x),$ with $q\in \N$ and $g_1\in \Z[\x]$, where $g_2$ is a polynomial in $\R[\x]$ satisfying
\begin{equation}\label{g_2}
    \frac{\partial^{i_1+i_2+\cdots+i_n}}{\partial_{x_1}^{i_1}\cdots \partial_{x_n}^{i_n}}\mathfrak{g}_2(\x)\ll_{i_1,\ldots,i_n} \varphi P^{-i_1-\cdots-i_n}, 
\end{equation}
for some parameter $\varphi\ (\geq 1),$ uniformly in $[-P,P]^n.$

Consider the exponential sum
\begin{equation*}
   \Sigma^{(0)}= |\Sigma|:=P^{-n}\biggl|\dsum_{|\x|\leq P}e(\mathfrak{f}(\x)+\mathfrak{g}(\x))\biggr|.
\end{equation*}
Let $j$ be an integer with $1\leq j\leq d-2.$ Recall the definition of the differencing operator $\Delta_{j}$ in $(\ref{3.3})$.  Also, we define an exponential sum
\begin{equation*}
    \Sigma^{(j)}:=P^{-(j+1)n}\dsum_{-2P\leq \h_1,\ldots,\h_j\leq 2P}\biggl|{\dsum_{\substack{\x\in \mathcal{B}_j }}}e\bigl(\Delta_{j}((\mathfrak{f}(\x)+\mathfrak{g}(\x));\h_1,\ldots,\h_j\bigr)\bigr)\biggr|,
\end{equation*}
where $\mathcal{B}_j$ is a box introduced in the following ($\ref{3434}$). Throughout the following lemmas $d\geq 2.$ When $d=2,$ the range $1\leq j\leq d-2$ is empty, and the displayed definition of $\Sigma^{(0)}$ is used directly. Thus, the proof of Lemma $\ref{lem4.1}$ starts with the original sum and performs the $q$-analogue van der Corput differencing argument without any preceding standard Weyl differencing steps. 
The first lemma provides the upper bound for $|\Sigma^{(j)}|$. 
\begin{lem}\label{lem4.1}
    Let $K\geq 1.$ Then, we have
    \begin{equation*}
        \Sigma^{(d-2)}\ll \left(P^{-(d-1)n}(q\varphi K)^{(d-1)n}(\log P)^n\mathscr{M}\right)^{1/2},
    \end{equation*}
    where $\mathscr{M}$ counts $(d-1)$-tuples of integer vectors $$(\x_1,\ldots,\x_{d-1})\in \Z^n\times\cdots\times\Z^n$$
    satisfying
    \begin{equation*}
        |\x_i|\leq \frac{P}{q\varphi K},\ \ (1\leq i\leq d-1),
    \end{equation*}
   such that  
   \begin{equation*}
       \|q\mathfrak{F}_i(\x_1,\ldots,\x_{d-1})\|\leq \frac{1}{P(q\varphi)^{d-2}K^{d-1}},\ \ (1\leq i\leq n).
   \end{equation*}
\end{lem}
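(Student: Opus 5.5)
We follow the $q$-analogue of van der Corput differencing used in Browning--Heath-Brown [\ref{ref9}, Section 4]. After $d-2$ standard Weyl differencing steps, each summand of $\Sigma^{(d-2)}$ is an exponential sum over $\x\in\mathcal{B}_{d-2}$ of the phase $\Delta_{d-2}(\mathfrak{f}+\mathfrak{g};\h_1,\ldots,\h_{d-2})$. This phase is quadratic in $\x$, and its quadratic part comes from the leading form $\mathfrak{F}$ through the polar form evaluated at $(\h_1,\ldots,\h_{d-2},\x,\x)$. The contribution of $\mathfrak{g}_2$ to the differenced phase still satisfies a derivative bound of shape (\ref{g_2}), with the same $\varphi$ and a scale of order $P$. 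We apply Cauchy--Schwarz over $\h_1,\ldots,\h_{d-2}$, which costs the square root appearing in the statement, and so reduce to bounding the mean square of the inner quadratic sums.

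For the inner sum we use the differencing shift $\h=qH\boldsymbol{u}$ with $H\approx P/(q\varphi K)$ (more precisely, van der Corput with step lattice $q\Z^n$ and length $H$). A shift by a multiple of $q$ leaves the rational part $q^{-1}\mathfrak{g}_1$ unchanged modulo $1$ up to lower-order terms, and with $H$ chosen this way the perturbation $\mathfrak{g}_2$ varies by $O(1/K)$ across each block. Concretely, we replace $\sum_{\x}e(\phi(\x))$ by an average over shifts $\x+q\boldsymbol{y}$ with $|\boldsymbol{y}|\le H$, apply Cauchy--Schwarz in $\x$, and expand. This produces a sum over differences $\boldsymbol{y}_{d-1}$ of $|\sum_{\x} e(\phi(\x+q\boldsymbol{y})-\phi(\x))|$, weighted by $H^{-n}$; this weighting is the source of the factor $(q\varphi K)^{n}$ per step. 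The new phase is linear in $\x$, with coefficient vector $q\,\underline{\mathfrak{F}}(\h_1,\ldots,\h_{d-2},\boldsymbol{y})$ plus a rational part with denominator $q$ and a small smooth error.

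We then sum the linear exponential sum over $\x$. Splitting $\x$ into residue classes modulo $q$ and using partial summation to absorb $\mathfrak{g}_2$ (the costs are controlled by $\varphi$), we obtain the usual geometric-series bound $\prod_i \min\{P/q,\ \|q\mathfrak{F}_i\|^{-1}\}$. The $\mathfrak{g}_1/q$ part contributes a complete sum modulo $q$, which has absolute value at most $q^n$ trivially, and this is absorbed. A dyadic decomposition of the $\min$ gives the factor $(\log P)^n$, leaving a count of tuples $(\h_1,\ldots,\h_{d-2},\boldsymbol{y})$ with $\|q\mathfrak{F}_i\|$ small, where the $\h_i$ are of size $P$ and $\boldsymbol{y}$ is of size $P/(q\varphi K)$.

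Finally, we shrink the $\h_i$ to size $P/(q\varphi K)$ using Davenport's shrinking lemma [\ref{ref20}, Lemma 3.3] (equivalently, the Heath-Brown/BHB variant), applied successively in each of $\h_1,\ldots,\h_{d-2}$. Each application reduces one range by the factor $q\varphi K$ and tightens the approximation threshold by the same factor. Tracking these factors yields the threshold $1/(P(q\varphi)^{d-2}K^{d-1})$ and the prefactor $P^{-(d-1)n}(q\varphi K)^{(d-1)n}$ times $\mathscr{M}$.

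The main obstacle is the bookkeeping in the shrinking step: keeping the bilinear forms $\boldsymbol{y}\mapsto q\mathfrak{F}_i$ symmetric so that the lemma applies, and making sure the $\varphi$-dependence of the threshold comes out exactly as $(q\varphi)^{d-2}$. We would follow the arrangement of [\ref{ref9}, Lemma 4.1] closely, and check that the case $d=2$, where no $\h_i$ are present and $\mathscr{M}$ counts only $\boldsymbol{y}$, follows from the same computation.
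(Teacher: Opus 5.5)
The overall structure matches the paper's, which imports the Browning--Heath-Brown argument: Cauchy--Schwarz over $\h_1,\ldots,\h_{d-2}$, a $q$-analogue of van der Corput, the linear-sum bound with a dyadic count, then Davenport shrinking (their Lemma 4.2). The gap is your choice of van der Corput length, which loses exactly one factor of $K$ in the threshold.

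With $H\asymp P/(q\varphi K)$ you obtain $P^{-(d-1)n}(q\varphi K)^n(\log P)^n$ times the number of tuples $(\h_1,\ldots,\h_{d-2},\boldsymbol{w})$ with
\[
|\h_i|\ll P,\qquad |\boldsymbol{w}|\ll P/(q\varphi K),\qquad \|q\mathfrak{F}_i(\h_1,\ldots,\h_{d-2},\boldsymbol{w})\|\le P^{-1}.
\]
Each of your $d-2$ shrinkings costs $(q\varphi K)^n$ and divides the threshold by $q\varphi K$. You therefore end with the right prefactor $P^{-(d-1)n}(q\varphi K)^{(d-1)n}$, but with threshold $1/(P(q\varphi)^{d-2}K^{d-2})$ rather than $1/(P(q\varphi)^{d-2}K^{d-1})$. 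Your count dominates $\mathscr{M}$, so what you prove is strictly weaker than the lemma. The case $d=2$ makes this plain: there is no shrinking, and you are left with threshold $P^{-1}$ instead of $(PK)^{-1}$.

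The loss is not cosmetic. In Lemma~\ref{lem4.3} one takes $q\varphi=Q$ and $K=(P/Q)(Q^*)^{-1/(d_1-1)}$. Then $1/(PQ^{d_1-2}K^{d_1-1})=QQ^*P^{-d_1}$ is exactly the major-arc width needed, and an extra factor $K$ (as large as $P/Q$) ruins it.

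The fix is to take $H\asymp P/(q\varphi)$. This is the largest length for which
\[
G(\x)=\Delta_{d-2}\mathfrak{g}_2(\x+q\boldsymbol{w};\h)-\Delta_{d-2}\mathfrak{g}_2(\x;\h)
\]
satisfies $\partial^{\boldsymbol{i}}G\ll P^{-|\boldsymbol{i}|}$. That is all partial summation requires: bounded variation, not small variation. You then get $P^{-(d-1)n}(q\varphi)^n(\log P)^n\mathscr{N}$, with $|\boldsymbol{w}|<P/(q\varphi)$ and threshold $P^{-1}$. Now shrink \emph{all} $d-1$ vectors:
\begin{itemize}
\item $\boldsymbol{w}$ by the factor $K$, at cost $K^n$, which moves the threshold to $1/(PK)$;
\item each $\h_i$ by the factor $q\varphi K$.
\end{itemize}
This yields $\mathscr{N}\ll(q\varphi)^{(d-2)n}K^{(d-1)n}\mathscr{M}$. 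Shrinking $\boldsymbol{w}$ through Davenport's lemma pays the same $K^n$ as your shorter $H$, but it also buys the factor $K$ in the threshold.

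Two smaller corrections. First, when $\deg\mathfrak{g}>d$ (as in Lemma~\ref{lem4.3}, where $\mathfrak{g}=\alpha_2F_2$), the differenced phase is not quadratic; only its $\mathfrak{f}$-part is. Your separate treatment of $\mathfrak{g}_1$ and $\mathfrak{g}_2$ does cope with this. Second, since $\mathfrak{g}_1\in\Z[\x]$, the shift by $q\boldsymbol{w}$ removes $q^{-1}\mathfrak{g}_1$ exactly modulo $1$, so no complete sums modulo $q$ arise. You should not split $\x$ into residue classes modulo $q$: doing so would replace $\|q\mathfrak{F}_i\|$ by $\|q^2\mathfrak{F}_i\|$. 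Summing over the whole box gives $\prod_i\min\{P,\|q\mathfrak{F}_i\|^{-1}\}$ directly.
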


\bigskip

 The conclusion of the lemma is therefore trivial unless $q\varphi \leq P$, as we henceforth suppose. For the proof of Lemma $\ref{lem4.1}$, we use  [$\ref{ref9}$, Lemma 4.1], which is about the upper bound of $|\Sigma|.$ The proof of [$\ref{ref9}$, Lemma 4.1] begins with obtaining the inequality
    \begin{equation}\label{4141}
    \begin{aligned}
        |\Sigma|^{2^{d-2}}&\ll P^{-(d-1)n}\dsum_{|\h_1|\leq P}\cdots\dsum_{|\h_{d-2}|\leq P}\biggl|\dsum_{\x\in I}e\bigl(\Delta_{d-2}((\mathfrak{f}(\x)+\mathfrak{g}(\x));\h_1,\ldots,\h_{d-2}\bigr)\bigr)\biggr|,
    \end{aligned}
    \end{equation}
    using $d-2$ standard Weyl differencing steps (see [$\ref{ref9}$, inequality (4.2)]), where $I\subseteq [-P,P]^n$ is a box with sides parallel to axes, depending on $\x_1,\ldots,\x_{d-2}.$ By applying the Cauchy-Schwarz inequality to the right hand side of $(\ref{4141})$, we find that
    \begin{equation}\label{424242}
         |\Sigma|^{2^{d-1}}\ll P^{-dn}\dsum_{|\h_1|\leq P}\cdots\dsum_{|\h_{d-2}|\leq P}\biggl|\dsum_{\x\in I}e\bigl(\Delta_{d-2}((\mathfrak{f}(\x)+\mathfrak{g}(\x));\h_1,\ldots,\h_{d-2}\bigr)\bigr)\biggr|^2.
    \end{equation}
    By the same argument leading to [$\ref{ref9}$, inequality (4.6)] using $q$- analogue van der Corput differencing argument, we find  that the right hand side of ($\ref{424242}$) is seen to be 
    \begin{equation}\label{4242}
        P^{-(d-1)n}q^n\varphi^n(\log P)^n\mathscr{N},
    \end{equation}
    where $\mathscr{N}$ counts $(d-1)$-tuples of integer vectors $(\x_1,\ldots,\x_{d-2},\boldsymbol{w})$ satisfying
    \begin{equation*}
        |\x_i|<P,\ (1\leq i\leq d-2)\ \text{and}\ |\boldsymbol{w}|<\left\lfloor\frac{P}{q\varphi}\right\rfloor,
    \end{equation*}
such that 
\begin{equation*}
    \|q\mathfrak{F}_i(\x_1,\ldots,\x_{d-2},\boldsymbol{w})\|\leq P^{-1}\ \text{for}\ 1\leq i\leq n.
\end{equation*}
        Furthermore, by [$\ref{ref9}$, Lemma 4.2], we have 
        \begin{equation}\label{4343}
            \mathscr{N}\ll (q\varphi)^{(d-2)n}K^{(d-1)n}\mathscr{M},
        \end{equation}
        where $\mathscr{M}$ is as in the statement of Lemma $\ref{lem4.1}.$ By substituting $(\ref{4343})$ into $(\ref{4242})$ and that into the right hand side of $(\ref{424242}),$ this completes the proof of [$\ref{ref9}$, Lemma 4.1].

        \bigskip
        
 \begin{proof}[Proof of Lemma 3.5]
 On recalling the definition of $\Sigma^{(d-2)}$, it follows by applying the Cauchy-Schwarz inequality that 
 \begin{equation*}
      \Sigma^{(d-2)}\ll \biggl(P^{-dn}\dsum_{|\h_1|\leq 2P}\cdots\dsum_{|\h_{d-2}|\leq 2P}\biggl|\dsum_{\x\in I}e\bigl(\Delta_{d-2}((\mathfrak{f}(\x)+\mathfrak{g}(\x));\h_1,\ldots,\h_{d-2}\bigr)\bigr)\biggr|^2\biggr)^{1/2}.
 \end{equation*}
 Then, it follows by the argument leading from $(\ref{424242})$ to $(\ref{4343})$ that one has
 \begin{equation*}
 \begin{aligned}
      \Sigma^{(d-2)}&\ll    \left( P^{-(d-1)n}q^n\varphi^n(\log P)^n\mathscr{N}\right)^{1/2}\\
      &\ll \left(P^{-(d-1)n}(q\varphi K)^{(d-1)n}(\log P)^n\mathscr{M}\right)^{1/2}.
 \end{aligned}
 \end{equation*}
 This completes the proof of Lemma $\ref{lem4.1}$.
\end{proof}

\bigskip


This shall play an essentially same role with [$\ref{ref9}$, Lemma 5.2] in the argument.


\begin{lem}\label{lem4242}
Let $P$ be a sufficiently large positive number. Let $Q$ be a positive number with $1\leq Q\leq P^{d_2-1}.$  For given $\boldsymbol{\alpha}\in \R^2$, 
  if we write
  \begin{equation*}
     \mathcal{T}^{(d_{2}-1)}:=  |T^{(d_{2}-1)}(\boldsymbol{\alpha})|,
  \end{equation*}
  then, we have either
   \begin{equation*}
    (i)\ \text{there exists $q\in \N$ with $1\leq q\leq Q\log P$ such that}\ \|q\alpha_2\|\leq Q(\log P)P^{-d_2},\ 
    \end{equation*}
    or
    \begin{equation*}
   (ii)\   \mathcal{T}^{(d_{2}-1)}\leq Q^{-\frac{n-B_2}{d_2-1}}(\log P)^{n+1}.
        \end{equation*}

\end{lem}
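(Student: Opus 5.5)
Let me prove the dichotomy by contraposition: assume (ii) fails, so $\mathcal{T}^{(d_2-1)}> Q^{-\frac{n-B_2}{d_2-1}}(\log P)^{n+1}$, and produce $q$ as in (i). The point is that $T^{(d_2-1)}(\boldsymbol{\alpha})$ involves only $\alpha_2$. After $d_2-1$ differencing steps the phase $\alpha_1\Delta_{d_2-1}(F_1)$ vanishes, because $d_1<d_2-1$. The phase $\alpha_2\Delta_{d_2-1}(F_2(\x);\h_1,\ldots,\h_{d_2-1})$ is linear in $\x$ with coefficient vector $\alpha_2\underline{F_2}(\h_1,\ldots,\h_{d_2-1})$, up to a constant depending only on the $\h_i$. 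So $T^{(d_2-1)}$ is exactly the quantity that Weyl's inequality in Birch's form treats.

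\emph{Step 1: linear sums.} I will bound the inner sum over the box $\mathcal{B}_{d_2-1}$ by the usual product of linear sums,
\[
\prod_{i=1}^n \min\bigl(P,\ \|\alpha_2 \underline{F_2}_i(\h_1,\ldots,\h_{d_2-1})\|^{-1}\bigr).
\]
Summing over $\h_{d_2-1}$ with $\h_1,\ldots,\h_{d_2-2}$ fixed, the standard counting argument (as in Birch, or [\ref{ref9}, Lemma 5.2]) gives
\[
\mathcal{T}^{(d_2-1)}\ll P^{-(d_2-1)n}(\log P)^n\, \mathscr{N}(\alpha_2;P),
\]
where $\mathscr{N}(\alpha_2;P)$ counts $(\h_1,\ldots,\h_{d_2-1})$ with $|\h_i|\le 2P$ and $\|\alpha_2\underline{F_2}_i(\h)\|\le P^{-1}$ for all $i$.

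\emph{Step 2: shrinking.} Next I apply the Davenport shrinking lemma $d_2-1$ times, exactly as in [\ref{ref9}, Lemma 4.2] (this was used in (\ref{4343})), with parameter $\theta$ defined by $P^{\theta}=P/Z$. This gives
\[
\mathscr{N}(\alpha_2;P)\ll Z^{(d_2-1)n}\,\mathscr{N}(\alpha_2;P/Z),
\]
where the targets now satisfy $|\h_i|\le P/Z$ and $\|\alpha_2\underline{F_2}_i\|\le Z^{d_2-1}P^{-d_2}\cdot O(1)$. Combining with the assumed lower bound on $\mathcal{T}^{(d_2-1)}$, I will choose $Z$ with $Z^{d_2-1}\asymp Q$, keeping track of the constant so that $Q\log P$ appears. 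This yields
\[
\#\{\h:\ |\h_i|\le P/Z,\ \|\alpha_2\underline{F_2}(\h)\|\le Q P^{-d_2}\}\gg (P/Z)^{(d_2-1)n}Q^{-\frac{n-B_2}{d_2-1}}(\log P),
\]
which is much larger than $(P/Z)^{(d_2-1)n-n+B_2}$.

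\emph{Step 3: the dichotomy.} Now split according to whether every counted $\h$ has $\underline{F_2}(\h)=\0$. If it does, the count of such $\h$ is $\ll (P/Z)^{(d_2-2)n+B_2}$ by the standard dimension bound for the variety $\underline{F_2}=\0$ (Birch's Lemma 3.3, with $B_2$ the singular-locus dimension). This contradicts Step 2, since $(P/Z)^{n-B_2}$ exceeds $Z^{n-B_2}$ once $Z\ll P^{1/2}$. The range $Q\le P^{d_2-1}$ forces a slight adjustment here, handled as in [\ref{ref9}]. Otherwise some $\h$ has $\underline{F_2}_i(\h)=:q\neq 0$. Then $0<|q|\ll (P/Z)^{d_2-1}\cdot$, which is wrong in scale. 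The correct choice is the reverse normalization: take the shrunk box $|\h_i|\le Z$ and the precision $\|\alpha_2 q\|\le Z^{d_2-1}P^{-d_2}$, so that $|q|\ll Z^{d_2-1}\asymp Q$. Up to the $\log P$ factors this gives (i).

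The main obstacle is the bookkeeping in Steps 2 and 3. The shrinking direction and the exponents must be chosen so that the nonvanishing case produces $q\le Q\log P$ with $\|q\alpha_2\|\le Q(\log P)P^{-d_2}$. At the same time, the vanishing case must beat the bound $Z^{(d_2-2)n+B_2}$ exactly when (ii) fails, i.e.\ with the exponent $\frac{n-B_2}{d_2-1}$. I expect to absorb the logs by inflating $Z$ by a $\log P$ factor. I will mirror the parameter choices of [\ref{ref9}, Lemma 5.2] as closely as possible.
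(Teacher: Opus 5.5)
Your route is the standard Birch argument, which is exactly what the paper outsources to Lemma 3.4 of [\ref{ref20}], leaving the details to the reader. That argument linearizes after $d_2-1$ differencings, counts via $\min(P,\|\cdot\|^{-1})$, applies Davenport shrinking, bounds the zeros of $\underline{F_2}$ using $B_2$, and reads off $q$ from a nonzero coordinate. So the approach is right, but the normalization in Steps 2--3 is wrong.

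Shrinking the box from $P$ to $P/Z$ costs $Z^{(d_2-1)n}$ but leaves precision $P^{-1}Z^{-(d_2-1)}$, not $Z^{d_2-1}P^{-d_2}$. So the inequality you display in Step 2 is false as stated. Your vanishing-case argument, with its condition $Z\ll P^{1/2}$ and the unspecified ``adjustment'' for large $Q$, is an artifact of that error. You later fix the scale, but only for the nonvanishing case; the vanishing case is never redone.

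Redo both cases in the corrected normalization.
\begin{itemize}
\item Put $Z=Q^{1/(d_2-1)}$, so $1\leq Z\leq P$ exactly because $1\leq Q\leq P^{d_2-1}$.
\item Davenport's lemma with $P^{\theta}=Z$ gives $\mathscr{N}(\alpha_2;P)\ll (P/Z)^{(d_2-1)n}\mathscr{N}'$. Here $\mathscr{N}'$ counts $\h$ with $|\h_i|\leq Z$ and $\|\alpha_2\underline{F_2}_i(\h)\|\ll Z^{d_2-1}P^{-d_2}=QP^{-d_2}$.
\item If (ii) fails, Step 1 yields $\mathscr{N}'\gg Z^{(d_2-1)n}Q^{-\frac{n-B_2}{d_2-1}}\log P=Z^{(d_2-2)n+B_2}\log P$.
\item The zeros of $\underline{F_2}$ in $[-Z,Z]^{(d_2-1)n}$ number only $\ll Z^{(d_2-2)n+B_2}$. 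Hence for large $P$ some counted $\h$ has $\underline{F_2}_i(\h)\neq 0$, uniformly over the whole range of $Q$.
\item Take $q=|\underline{F_2}_i(\h)|\ll Z^{d_2-1}=Q$. Absorbing constants into $\log P$ gives (i).
\end{itemize}
The single extra factor $\log P$ in $(\log P)^{n+1}$ is exactly what drives the comparison with the zero count. No restriction on $Z$ and no adjustment for large $Q$ is needed.
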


\begin{proof}
The conclusion immediately follows by [$\ref{ref20}$, Lemma 3.4]. The details are left to the reader.
\end{proof}

\bigskip

\begin{lem}\label{lem4.3}
Let $P$ be a sufficiently large. Let $Q$ and $Q^*$ be  positive numbers with $1\leq Q^*\leq (P/Q)^{d_{1}-1}.$ Suppose that 
    \begin{equation*}
        \|q\alpha_{2}\|\leq QP^{-d_2},
    \end{equation*}
    where $q\leq Q$ with $q\in\N$.  For given $\boldsymbol{\alpha}\in \R^2$, if we write
    \begin{equation*}
\mathcal{T}^{(d_{1}-2)}
 := |T^{(d_{1}-2)}(\boldsymbol{\alpha})|,
    \end{equation*}
    then, we have either
    \begin{equation*}
   (i)\ \text{there exists $q^*\in \N$ with $1\leq q^*\leq Q^*\log P$ such that     }\end{equation*} 
     $$\|qq^*\alpha_{{1}}\|\leq QQ^*(\log P)P^{-d_{1}},$$
or
    \begin{equation*}
      (ii) \ \mathcal{T}^{(d_{1}-2)}\leq (Q^*)^{-\frac{n-B_{1}}{2(d_{1}-1)}} (\log P)^{(n+1)/2}.
    \end{equation*}
\end{lem}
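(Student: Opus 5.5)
We prove Lemma \ref{lem4.3} by applying Lemma \ref{lem4.1} with $d=d_1$ to the exponential sum $T^{(d_1-2)}(\boldsymbol{\alpha})$. We take $\mathfrak{f}=\alpha_1F_1$ and treat $\alpha_2F_2$ as the perturbation $\mathfrak{g}$. Write $\alpha_2=a/q+\beta$ with $|\beta|\leq QP^{-d_2}q^{-1}$ and $(a,q)=1$. Then
$$\alpha_2F_2(\x)=q^{-1}aF_2(\x)+\beta F_2(\x),$$
which gives the decomposition $\mathfrak{g}=q^{-1}\mathfrak{g}_1+\mathfrak{g}_2$ with $\mathfrak{g}_1=aF_2\in\Z[\x]$ and $\mathfrak{g}_2=\beta F_2$. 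On $[-P,P]^n$ each $i$-th partial derivative of $\mathfrak{g}_2$ is $\ll |\beta|P^{d_2-i}\ll QP^{-i}$. So (\ref{g_2}) holds with $\varphi\asymp Q$; if $Q<1$ we take $\varphi=1$.

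The leading form of $\mathfrak{f}$ is $\alpha_1F_1$. The multilinear quantity controlled by Lemma \ref{lem4.1} is therefore $\alpha_1\underline{F_1}(\x_1,\ldots,\x_{d_1-1})$, where $\underline{F_1}$ denotes the gradient form of $F_1$. Lemma \ref{lem4.1} then gives, for any $K\geq1$,
$$\mathcal{T}^{(d_1-2)}\ll \bigl(P^{-(d_1-1)n}(qQK)^{(d_1-1)n}(\log P)^n\mathscr{M}\bigr)^{1/2},$$
where $\mathscr{M}$ counts tuples $(\x_1,\ldots,\x_{d_1-1})$ with $|\x_i|\leq P/(qQK)$ and
$$\|q\alpha_1\underline{F_1}(\x_1,\ldots,\x_{d_1-1})_i\|\leq \bigl(P(qQ)^{d_1-2}K^{d_1-1}\bigr)^{-1}.$$
The factor $q$ multiplies $\alpha_1$ inside the norm, and this is the source of the product $qq^*$ in alternative (i).

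This counting problem is now exactly of the form treated in [\ref{ref20}, Lemma 3.4] (the geometry-of-numbers/Davenport shrinking dichotomy), with $\alpha=q\alpha_1$, effective box length $P'=P/(qQ)\gg P/Q^2$, and degree $d_1$. We choose $K$ so that the parameter $(Q^*)^{1/(d_1-1)}$ plays the role of the shrinking factor. The hypothesis $Q^*\leq (P/Q)^{d_1-1}$ is exactly what keeps this shrinking inside the admissible range.

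That dichotomy gives one of two outcomes. Either $\mathscr{M}$ is dominated by the tuples with $\underline{F_1}(\x_1,\ldots,\x_{d_1-1})=0$, which number $\ll (P/(qQK))^{(d_1-2)n+B_1}$ by the standard bound involving the singular locus dimension $B_1$. Substituting this and optimising $K$ gives the saving $(Q^*)^{-(n-B_1)/(d_1-1)}$ inside the square root, hence alternative (ii) with the exponent halved and the factor $(\log P)^{(n+1)/2}$. Or else a nonzero tuple produces an integer $q^*\leq Q^*\log P$ with
$$\|q^*q\alpha_1\|\leq Q^*(\log P)\,(qQ)P^{-d_1}.$$
This is alternative (i) after absorbing $q\leq Q$, up to adjusting constants and powers of $Q$ inside $Q^*$.

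The main obstacle is bookkeeping the powers of $q$ and $Q$. The perturbation both shrinks the box, by $q\varphi$, and forces the multiplier $q$ onto $\alpha_1$. The scaling must be chosen so that case (i) reads $\|qq^*\alpha_1\|\leq QQ^*(\log P)P^{-d_1}$ with no extra powers of $Q$, and this is where the constraint $Q^*\leq(P/Q)^{d_1-1}$ enters. I would follow the parallel argument of [\ref{ref9}, Lemma 5.3] to fix these normalisations.
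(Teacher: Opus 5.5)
Your route is the paper's. You apply Lemma~\ref{lem4.1} with $d=d_1$, $\mathfrak{f}=\alpha_1F_1$ and $\mathfrak{g}=\alpha_2F_2$. You choose $K$ so that the box in $\mathscr{M}$ has side $(Q^*)^{1/(d_1-1)}$. You compare $\mathscr{M}$ with the number of tuples on which $\underline{F_1}$ vanishes, bounded via $B_1$ by [\ref{ref9}, Lemma 5.1]. Finally you take $q^*$ to be the absolute value of a nonzero component of $\underline{F_1}(\x_1,\ldots,\x_{d_1-1})$.

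\textbf{Gap.} As written, the proposal does not reach the stated conclusion. You take $\varphi\asymp Q$ and discard the factor $q^{-1}$ in your own bound $|\beta|\leq q^{-1}QP^{-d_2}$. With $q\varphi=qQ$, two steps fail.
\begin{itemize}
\item Making the box $P/(qQK)$ equal to $(Q^*)^{1/(d_1-1)}$ requires $K=P(qQ)^{-1}(Q^*)^{-1/(d_1-1)}\geq 1$. That is $Q^*\leq (P/(qQ))^{d_1-1}$, which the hypothesis $Q^*\leq(P/Q)^{d_1-1}$ does not give when $q>1$.
\item The approximation condition becomes $\|qq^*\alpha_1\|\leq \bigl(P(qQ)^{d_1-2}K^{d_1-1}\bigr)^{-1}=qQQ^*P^{-d_1}$. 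This is off by the factor $q$, which can be as large as $Q$. ``Absorbing $q\leq Q$'' gives $Q^2Q^*P^{-d_1}$, not $QQ^*(\log P)P^{-d_1}$.
\end{itemize}
Rescaling $Q^*$ to compensate would weaken (ii) by a power of $q$. So this is not a matter of adjusting constants.

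\textbf{Fix.} Keep $\varphi=Q/q$. This is admissible: $\varphi\geq 1$ because $q\leq Q$, and $\partial^{\mathbf{i}}(\beta F_2)\ll|\beta|P^{d_2-|\mathbf{i}|}\leq (Q/q)P^{-|\mathbf{i}|}$. Then $q\varphi=Q$. Take $K=(P/Q)(Q^*)^{-1/(d_1-1)}$, which is $\geq 1$ precisely by the hypothesis.
\begin{itemize}
\item The box becomes $(Q^*)^{1/(d_1-1)}$.
\item The prefactor becomes $(QK/P)^{(d_1-1)n}=(Q^*)^{-n}$.
\item The approximation bound becomes $\bigl(PQ^{d_1-2}K^{d_1-1}\bigr)^{-1}=QQ^*P^{-d_1}$ exactly.
\end{itemize}
If (ii) fails, then $\mathscr{M}\gg (Q^*)^{n-\frac{n-B_1}{d_1-1}}\log P$. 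This exceeds the count $\ll (Q^*)^{n-\frac{n-B_1}{d_1-1}}$ of tuples in that box with $\underline{F_1}=0$. Hence some counted tuple has a nonzero component, and it yields $q^*\ll Q^*\leq Q^*\log P$.

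No appeal to [\ref{ref20}, Lemma 3.4] is needed here. Davenport's shrinking is already built into Lemma~\ref{lem4.1} through $K$, via [\ref{ref9}, Lemma 4.2]. What remains is only this elementary comparison of counts.
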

\begin{rmk}\label{remark 1}
    On noting by the Cauchy-Schwarz inequality that $$|S(\boldsymbol{\alpha})|^{2^{d_1-2}}\ll \mathcal{T}^{(d_{1}-2)},$$ we infer that the conclusion of Lemma $\ref{lem4.3}$ with $\mathcal{T}^{(d_{1}-2)}$ replaced by $|S(\boldsymbol{\alpha})|^{2^{d_1-2}}$ still holds, under the same assumption of Lemma $\ref{lem4.3}.$
\end{rmk}
\begin{proof}[Proof of Lemma 3.7]
  We shall apply Lemma $\ref{lem4.1}.$ In order to do this, we define
\begin{equation}\label{mathfrak f}
    \begin{aligned}
        \mathfrak{f}(\x)=\alpha_{1}F_{1}(\x)
          \end{aligned}
\end{equation}
and
\begin{equation}\label{mathfrak g}
    \begin{aligned}
        \mathfrak{g}(\x)&=\alpha_{2}F_{2}(\x).
    \end{aligned}
\end{equation}
By the hypothesis on $\alpha_{2}$, we see that there exists $a_{2}$ such that one has
\begin{equation*}
    \alpha_{2}=\frac{a_{2}}{q}+\theta_{2},
\end{equation*}
 with
\begin{equation*}
    |\theta_{2}|\leq q^{-1}QP^{-d_2}.
\end{equation*}
We further define
\begin{equation*}
    \mathfrak{g}_1(\x)=a_{2}F_{2}(\x)
\end{equation*}
and 
\begin{equation*}
    \mathfrak{g}_2(\x)=\theta_{2}F_{2}(\x).
\end{equation*}
Then, we have $\mathfrak{g}=q^{-1}\mathfrak{g}_1+\mathfrak{g}_2$, and note that $\mathfrak{g}_2$ satisfies $(\ref{g_2})$ with $\varphi=Q/q$. Hence, this is in the required set up for Lemma $\ref{lem4.1}.$

Recall the definition $\Sigma^{(j)}$ in the preamble to the statement Lemma $\ref{lem4.1}.$ With the functions $\mathfrak{f}$ and $\mathfrak{g}$ defined in $(\ref{mathfrak f})$ and $(\ref{mathfrak g})$, we note that 
\begin{equation*}
  \Sigma^{(d_1-2)}= \mathcal{T}^{(d_{1}-2)}.
\end{equation*}
Hence, it follows by applying Lemma $\ref{lem4.1}$ with $d=d_{1}$ that
\begin{equation}\label{412}
\begin{aligned}
    \mathcal{T}^{(d_{1}-2)}&=\Sigma^{(d_1-2)}\\
    &\ll \left(P^{-(d_{1}-1)n}(Q K)^{(d_{1}-1)n}(\log P)^n\mathscr{M}\right)^{1/2}.
    \end{aligned}
\end{equation}

Note that $\alpha_{{1}}F_{{1}}(\x)$ is the leading form of degree $d_{1}$ of $\mathfrak{f}(\x)$. Define 
$F_{1}(\x_1,\ldots,\x_{d_1})$ to be $d_1$-linear polar form of $F_{1}(\x)$ multiplied by $d_1!$, and put 
\begin{equation}\label{polarform of F_d}
    F_{1}(\x_1,\ldots,\x_{d_1})=\underline{F_{1}}(\x_1,\ldots,\x_{d_1-1})\cdot \x_{d_1}.
\end{equation}
Let $(\underline{F_{1}})_j(\x_1,\ldots,\x_{d_1-1})$ be $j$-th component of the vector $\underline{F_{1}}(\x_1,\ldots,\x_{d_1-1}),$ with $1\leq j\leq n.$ 
Hence, we see that
the quantity $\mathscr{M}$ in the bound of $(\ref{412})$ is the number of $(d_{1}-1)$-tuples of integer vectors
\begin{equation*}
    (\x_1,\ldots,\x_{d_{1}-1})\in \Z^n\times\cdots\times\Z^n
\end{equation*}
satisfying
\begin{equation*}
    |\x_{i}|\leq P/(Q K),\ (1\leq i\leq d_{1}-1),
\end{equation*}
such that 
\begin{equation*}
    \|q\alpha_{{1}}(\underline{F_{{1}}})_i\|\leq \frac{1}{PQ^{d_{1}-2}K^{d_{1}-1}},\ (1\leq i\leq n).
\end{equation*}
We assume that 
\begin{equation}\label{410410}
\mathcal{T}^{(d_{1}-2)}> (Q^*)^{-\frac{n-B_{1}}{2(d_{1}-1)}} (\log P)^{(n+1)/2}.
\end{equation}
Then, in order to complete the proof, it suffices to show that there exists $q^*\in \N$ with $1\leq q^*\leq Q^*\log P$ such that 
\begin{equation*}
\|qq^*\alpha_{1}\|\leq QQ^*(\log P)P^{-d_{1}}.
\end{equation*}
It follows from $(\ref{412})$ and $(\ref{410410})$ with 
\begin{equation*}
    K=\left(\frac{P}{Q}\right)(Q^{*})^{-\frac{1}{d_{1}-1}},
\end{equation*}
that one has
\begin{equation*}
    (Q^*)^{-\frac{n-B_{1}}{d_{1}-1}}(\log P)^{n+1}\ll (Q^*)^{-n}(\log P)^n\mathscr{M},
\end{equation*}
and thus 
\begin{equation}\label{mu}
    \mathscr{M}\gg (Q^*)^{n-\frac{n-B_{1}}{d_{1}-1}}(\log P).
\end{equation}
Meanwhile, define $\mathscr{M}^*$ by the number of $(d_{1}-1)$-tuples of integer vectors
\begin{equation*}
    (\x_1,\ldots,\x_{d_{1}-1})\in \Z^n\times\cdots\times\Z^n
\end{equation*}
satisfying
\begin{equation*}
    |\x_{i}|\leq (Q^*)^{\frac{1}{d_{1}-1}},\ (1\leq i\leq d_{1}-1)
\end{equation*}
such that 
\begin{equation*}  \text{rank}\left(((\underline{F_{1}})_1(\x_1,\ldots,\x_{d_{1}-1}),\ldots,(\underline{F_{1}})_{n}(\x_1,\ldots,\x_{d_{1}-1}))\right)=0,
\end{equation*}
which is equivalent to 
\begin{equation*}
(\underline{F_{1}})_j(\x_1,\ldots,\x_{d_{1}-1})=0\ \text{for all}\ j=1,\ldots,n.
\end{equation*}
Then, we find from [$\ref{ref9}$, Lemma 5.1] with $$\widehat{S}_{d_{1}}\bigl((Q^*)^{\frac{1}{d_{1}-1}}\bigr)=\mathscr{M}^*,$$
that 
\begin{equation}\label{mu*}
    \mathscr{M}^*\ll \biggl((Q^*)^{\frac{1}{d_{1}-1}}\biggr)^{B_{1}+n(d_{1}-2)}=(Q^*)^{n-\frac{n-B_{1}}{d_{1}-1}}.
\end{equation}
Hence, we see from $(\ref{mu})$ and $(\ref{mu*})$
that there exists a $(d_{1}-1)$-tuples of integer vector $ (\x_1,\ldots,\x_{d_{1}-1})$, counted by $\mathscr{M}$, such that
   \begin{equation*}
        \text{rank}((\underline{F_{{1}}})_1(\x_1,\ldots,\x_{d_{1}-1}),\ldots,(\underline{F_{{1}}})_{n}(\x_1,\ldots,\x_{d_{1}-1}))=1.
    \end{equation*}
Without loss of generality, for such $(\x_1,\ldots,\x_{d_{1}-1})\in \mathscr{M}$, we may assume that 
    \begin{equation*}
(\underline{F_{{1}}})_1(\x_1,\ldots,\x_{d_1-1})\neq 0.
    \end{equation*}
    Let us set $q^*=|(\underline{F_{{1}}})_1(\x_1,\ldots,\x_{d_{1}-1})|$. Note that $(\underline{F_{{1}}})_1$ is of degree $d_{1}-1$. 
    Then, it follows by the fact that $(\x_1,\ldots,\x_{d_1-1})$ is in $\mathscr{M}$, that for sufficiently large $P$ one has
    \begin{equation*}
    \begin{aligned}
        q^*&=|(\underline{F_{1}})_1(\x_1,\ldots,\x_{d_1-1})|\leq Q^*\log P,
    \end{aligned}
    \end{equation*}
    and one has
    \begin{equation*}
        \|qq^*\alpha_{1}\|\leq QQ^*P^{-d_{1}}.
    \end{equation*}
    This completes the proof of Lemma $\ref{lem4.3}.$
\end{proof}




\bigskip

\section{Minor arcs estimates}\label{subsec3.4}

Recall the definition ($\ref{major arcs1}$) of $\mathfrak{M}(Q)$, and that $\mathfrak{m}(Q)=[0,1)^2\setminus \mathfrak{M}(Q)$. In this section, we shall use a pruning argument to deduce the following upper bound for the mean values of exponential sums over minor arcs
\begin{equation*}
    \dint_{\mathfrak{m}(Q)}S(\boldsymbol{\alpha})d\boldsymbol{\alpha}\ll P^{-d_1-d_2-\delta},
\end{equation*} 
for some $\delta>0$ and for sufficiently large $n.$ We record this in 
Lemma $\ref{main lemma for minor arcs estimate}$ below. We shall prove this lemma at the end of this section. In advance of the statement of this lemma, we introduce the notation. Recall the definition $(\ref{definition of dimension of singular locus})$ of $B_i\ (i=1,2)$. For a given quantity $\mathfrak{B}>0$  in $(\ref{definition of B}),$ we write
\begin{equation}\label{sisi}
    s_i=\dsum_{i\leq j\leq 2}\frac{2^{d_j-1}(d_j-1)}{n-B_j}\ \text{for $i=1,2$},
\end{equation}
and write
$$N_1(n):=2s_1+s_2.$$
and 
$$N_2(n,\mathfrak{B}):=d_1\left(\frac{2^{d_1-1}}{n-B_1}\cdot\left(1-\frac{2\mathfrak{B}\cdot s_2}{d_1}\right)+\biggl(1-\frac{\mathfrak{B}}{d_1}\biggr)s_2\right)+2s_2.$$
\begin{lem}\label{main lemma for minor arcs estimate}
   Let $\mathfrak{B}$ be a positive number given in  $(\ref{definition of B}).$ Then, whenever $N_1(n)<1$ and $N_2(n,\mathfrak{B})<1$, one has
    \begin{equation*}
         \dint_{\mathfrak{m}(Q)}S(\boldsymbol{\alpha})d\boldsymbol{\alpha}\ll P^{-d_1-d_2-\delta},
    \end{equation*}
    for some $\delta>0.$
\end{lem}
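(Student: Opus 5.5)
Our plan is a dyadic pruning argument over the two-dimensional minor arcs. We split according to the Diophantine quality of $\alpha_2$, and then of $\alpha_1$ relative to the denominator of $\alpha_2$.

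\textbf{Dyadic decomposition.} For $\alpha_2$ we use the one-dimensional arcs $\mathfrak{M}_2(L)$ with $L$ running over powers of $2$ between $Q$ and $P^{d_2-1}$. Write $\mathfrak{N}_2(L)=\mathfrak{M}_2(2L)\setminus\mathfrak{M}_2(L)$, and let $\mathfrak{m}_2(P^{d_2-1})$ be the remaining set. Then
\[
\mathfrak{m}(Q)\subseteq \bigl([0,1)\times \mathfrak{m}_2(Q)\bigr)\cup\bigl(\mathfrak{m}(Q)\cap([0,1)\times\mathfrak{M}_2(Q))\bigr).
\]

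\textbf{Case 1: $\alpha_2\in\mathfrak{m}_2(Q)$.} For each $L$ we apply (\ref{key minor arcs estimate}) with $\mathcal{M}=\mathfrak{N}_2(L)$. The measure is $\text{mes}(\mathfrak{N}_2(L))\ll L^2P^{-d_2}$. Since $\alpha_2\notin\mathfrak{M}_2(L)$, Lemma \ref{lem4242} gives $\mathcal{T}^{(d_2-1)}\ll L^{-(n-B_2)/(d_2-1)}P^{\epsilon}$. The mean value factor $P^{-\mathfrak{D}+\epsilon}$ is at least as good as $P^{-\mathfrak{B}+\epsilon}$. This produces the bound
\[
P^{-\mathfrak{B}-d_2+\epsilon}\,L^{2-(n-B_2)/(2^{d_2-1}(d_2-1))}=P^{-\mathfrak{B}-d_2+\epsilon}L^{2-1/s_2}.
\]
This alone saves only $P^{-\mathfrak{B}}$ rather than $P^{-d_1}$. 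We therefore interpolate with the trivial bound in $\alpha_1$, or rather with an $\alpha_1$-pruning. For each fixed $\alpha_2$ we bound $\int_0^1|S|\,d\alpha_1$ using Lemma \ref{lem4.3} (via Remark \ref{remark 1}) with $q$ and $Q$ replaced by the approximation data of $\alpha_2$ at height $L$. The $\alpha_1$-integral then splits into arcs of height $Q^*$ with measure $\ll LQ^{*2}P^{-d_1}$ and saving $(Q^*)^{-(n-B_1)/(2^{d_1-1}(d_1-1))}$.

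We then take a geometric mean of the two bounds, with weight $\theta$ on the $U$-route and $1-\theta$ on the $\alpha_1$-pruning route, where $\theta$ is chosen so that the exponent of $P$ becomes $-d_1-d_2-\delta$. Optimising over $\theta$ is what should produce the specific combination
\[
d_1\Bigl(\tfrac{2^{d_1-1}}{n-B_1}\bigl(1-\tfrac{2\mathfrak{B}s_2}{d_1}\bigr)+\bigl(1-\tfrac{\mathfrak{B}}{d_1}\bigr)s_2\Bigr)+2s_2<1,
\]
that is, $N_2(n,\mathfrak{B})<1$. Summing over the $O(\log P)$ dyadic $L$ finishes this case. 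The contribution of $\mathfrak{m}_2(P^{d_2-1})$ is handled in the same way with $L=P^{d_2-1}$.

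\textbf{Case 2: $\alpha_2\in\mathfrak{M}_2(Q)$ but $\boldsymbol{\alpha}\notin\mathfrak{M}(Q)$.} Write $\|q\alpha_2\|\le QP^{-d_2}$ with $q\le Q$. Then $\alpha_1$ has no approximation with a common denominator at level $Q$. We prune $\alpha_1$ dyadically in $Q^*$, from about $Q$ up to $(P/Q)^{d_1-1}$, using Lemma \ref{lem4.3} and Remark \ref{remark 1}. We also use $|S|\ll|S|^{\lambda}|S|^{1-\lambda}$, bounding one factor by the $\alpha_2$-saving from Lemma \ref{lem4242} on $\mathfrak{N}_2$-type shells. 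The measure of the set where $\alpha_2$ has height $L$ and $qq^*\alpha_1$ has height $Q^*$ is $\ll L^2P^{-d_2}\cdot(LQ^*)^2P^{-d_1}$. Balancing the exponents $2s_1+s_2$ against $1$ shows that every shell contributes $\ll P^{-d_1-d_2}(LQ^*)^{-\delta'}$. Since $\max(L,LQ^*)\ge Q=P^{\eta}$ on $\mathfrak{m}(Q)$, we obtain a power saving, and this is exactly where the hypothesis $N_1(n)<1$ enters.

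\textbf{Main obstacle.} The hardest step is the interpolation in Case 1. The $U$-route bound (\ref{goal in section 3.1}) is only a bound for the integral, not pointwise. So Hölder cannot be applied naively, and the two routes must be combined at the level of mean values over the same shell. We would first try taking the minimum of the two shell bounds, namely the (\ref{key minor arcs estimate}) bound versus the Lemma \ref{lem4.3}-pruned bound summed over $Q^*$, and check when this minimum is $\ll P^{-d_1-d_2-\delta}$ for all $L$. A short computation with the exponents should reduce this condition to $N_2(n,\mathfrak{B})<1$. Finally, $\eta$ is chosen small enough that the major-arc shells with $L\le Q$ are excluded consistently.
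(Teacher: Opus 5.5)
Your final plan (on each $\alpha_2$-shell take the better of the signed full-fibre bound (\ref{goal in section 3.1}) with Lemma \ref{lem4242} and the absolute-value pruning via Lemma \ref{lem4.3}, plus a two-dimensional pruning near the major arcs using both lemmas) is essentially the paper's proof: the paper fixes the crossover height $Q_2^{(2)}$ at which the first two bounds coincide, which is where $N_2(n,\mathfrak{B})<1$ enters; it prunes over $\mathfrak{M}_{2,1}$-shells below this height, which is where $N_1(n)<1$ enters; and it takes $\eta\le\theta/2$ so that $\mathfrak{M}(Q)\subseteq\mathfrak{N}_2$, so the signed integral is only needed on full $\alpha_1$-fibres. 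Three small repairs are needed: apply (\ref{goal in section 3.1}) directly with the defining property of $\mathfrak{B}$, not (\ref{key minor arcs estimate}), whose hypothesis the lemma does not assume; in Case 1 the $\alpha_1$-major pieces also need the $\alpha_2$-saving $L^{-(n-B_2)/(2^{d_2-1}(d_2-1))}$ from Lemma \ref{lem4242}, as in your Case 2, since Lemma \ref{lem4.3} gives no saving when $Q^*\asymp 1$; and the joint shell measure is $\ll L^3(Q^*)^2P^{-d_1-d_2}$ by (\ref{measure}), not $L^4(Q^*)^2P^{-d_1-d_2}$, and only the former makes the balance come out as exactly $N_1(n)<1$.
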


\bigskip

\subsection{Pruning argument}\label{subsection 4.1}
In order to introduce this pruning argument from $[0,1)^2$ to $\mathfrak{M}(Q)$, we shall  define suitable intermediate sets  between $[0,1)^2$ and $\mathfrak{M}(Q)$. Recall the definition ($\ref{majorarcs2}$) and $(\ref{minorarcs2})$ of $\mathfrak{M}_i$ and $\mathfrak{m}_i$ with $i=1,2.$ We define the major arcs 
$\mathfrak{M}_{2,1}\left(Q_2,Q_1\right)$
to be the set of $(\alpha_2,\alpha_1)\in [0,1)^{2}$ having the property that there exist natural numbers $q_i\ (i=1,2)$ with $q_i\leq Q_i$  such that $q_{2}|q_1$, and that $\|q_i\alpha_k\|\leq Q_iP^{-d_k}$ for $ i\leq k\leq 2.$ We see that 
\begin{equation}\label{measure}
    \text{mes}(\mathfrak{M}_{2,1}\left(Q_2,Q_1\right))\ll P^{-d_1-d_2}\cdot Q_1^{2+\epsilon}\cdot Q_2.
\end{equation}
In fact, for fixed natural numbers $q_i\ (i=1,2)$, the measure of $(\alpha_2,\alpha_1)\in [0,1)^{2}$ satisfying $\|q_i\alpha_i \|\leq Q_iP^{-d_i}$ with $i=1,2$ is \begin{equation}\label{525252}
    O\left(P^{-d_1-d_2}Q_1Q_2\right).
\end{equation}
 Since $q_{2}|q_1$,  the number of possible choices for $q_{1},q_2$ is $O(Q_1^{\epsilon})$, for fixed $q_1$. Plus, the number of possible choice for $q_1$ is $O(Q_1)$. Hence, combining this with $(\ref{525252}),$ we have $(\ref{measure}).$ 

 Over each set associated with $\mathfrak{M}_{2,1}\left(Q_2,Q_1\right)$ arising from the pruning argument, we shall obtain bounds for mean values of exponential sums using Lemma $\ref{lem4242}$ and $\ref{lem4.3}$ together with Lemma $\ref{lem3131}$ and $\ref{main lemma 2}$. To apply these lemmas, we require some restrictions on the heights $Q_2$ and $Q_1$ of the major arcs $\mathfrak{M}_{2,1}\left(Q_2,Q_1\right)$. Hence, we introduce here those restrictions.  We put $Q_j^{(i)}$ with $1\leq i\leq j+1$ and $j=1,2$ satisfying the following conditions:
\begin{equation}\label{important condition on the heights}
    \begin{aligned}
        &\ (a)\ Q_1^{(1)}\leq Q_1^{(2)}\ (\leq P^{d_1-1}).\\
        &\ (b)\  Q_2^{(1)}\leq Q_2^{(2)}\leq Q_2^{(3)}\ (=P^{d_2-1}).\\
        &\ (c)\ Q_{2}^{(i)} \leq Q_1^{(i)}\ (i=1,2).\\
        &\ (d)\  \bigl(Q_1^{(i)}/Q_{2}^{(i)}\bigr)\leq \bigl(P/Q^{(i)}_{2}\bigr)^{d_1-1}\ (i=1,2).
    \end{aligned}
\end{equation}
We shall specify these quantities $Q_{j}^{(i+1)}$ in terms of $P$ in Subsection $\ref{subsec 4.2}$, and show that they satisfy the above conditions in $(\ref{important condition on the heights})$.


With those quantities $Q_j^{(i+1)}$ in mind, we define intermediate sets by
\begin{equation}\label{intermediate set 1}
    \begin{aligned}
      \mathfrak{N}_1&:=  \mathfrak{M}_2(Q_2^{(3)})\times [0,1),\\
     \mathfrak{N}_2 &:=\mathfrak{M}_2(Q_2^{(2)})\times [0,1), \\ \mathfrak{N}_3&:=\mathfrak{M}_{2,1}(Q_2^{(2)},Q_1^{(2)}) \\
      \mathfrak{N}_4&:=\mathfrak{M}_{2,1}(Q_2^{(1)},Q_1^{(1)}).
    \end{aligned}
\end{equation}
Using these sets, we decompose the set $\mathfrak{N}_1\setminus\mathfrak{N}_4$  by
\begin{equation*}
\mathfrak{N}_1\setminus\mathfrak{N}_4=\bigcup_{1\leq i\leq 3}(\mathfrak{N}_i\setminus \mathfrak{N}_{i+1}).
\end{equation*}
This decomposition provides 
\begin{equation}\label{the triangle inequality for the minor arcs estimates}
\begin{aligned}
\dint_{\mathfrak{N}_1\setminus\mathfrak{N}_4} S(\boldsymbol{\alpha}) d\boldsymbol{\alpha}\ll  \dsum_{i=1}^3|\mathcal{I}_i|,
\end{aligned}
\end{equation}
where
\begin{equation*}
    \begin{aligned}
\mathcal{I}_1&=\dint_{\mathfrak{N}_{1}\setminus \mathfrak{N}_{2}} S(\boldsymbol{\alpha}) d\boldsymbol{\alpha}\\
\mathcal{I}_2&=\dint_{\mathfrak{N}_{2}\setminus \mathfrak{N}_{3}} |S(\boldsymbol{\alpha})| d\boldsymbol{\alpha}\\
\mathcal{I}_3&=\dint_{\mathfrak{N}_{3}\setminus \mathfrak{N}_{4}} |S(\boldsymbol{\alpha})| d\boldsymbol{\alpha}.
    \end{aligned}
\end{equation*}

The following three propositions give upper bounds for mean values $\mathcal{I}_i$ with $i=1,2,3.$

\begin{prop}\label{prop5.1}
Let $\mathfrak{B}$ be a positive number given in  $(\ref{definition of B}).$ Then, whenever $N_1(n)<1$ and $N_2(n,\mathfrak{B})<1,$ one has
\begin{equation}\label{pruning piece}
\begin{aligned}
   \mathcal{I}_1\ll P^{-d_2-d_1-\delta},
\end{aligned}
\end{equation}
for some $\delta>0.$
\end{prop}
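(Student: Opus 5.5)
Throughout, $\mathcal{I}_1$ is the integral of $S(\boldsymbol{\alpha})$ over $\mathfrak{N}_1\setminus\mathfrak{N}_2=(\mathfrak{M}_2(P^{d_2-1})\setminus\mathfrak{M}_2(Q_2^{(2)}))\times[0,1)$. The $\alpha_1$-integral runs over the full interval $[0,1)$, so the differencing machinery of Section \ref{sec3} applies directly via the bound (\ref{key minor arcs estimate}).

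\textbf{Dyadic decomposition.} We cover $\mathfrak{M}_2(P^{d_2-1})\setminus\mathfrak{M}_2(Q_2^{(2)})$ by $O(\log P)$ dyadic pieces
\[
\mathcal{M}_R:=\mathfrak{M}_2(2R)\setminus\mathfrak{M}_2(R),\qquad Q_2^{(2)}\leq R\leq P^{d_2-1}.
\]
Each piece has measure $\text{mes}(\mathcal{M}_R)\ll R^2P^{-d_2}$.

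\textbf{Bounding $T^{(d_2-1)}$ on each piece.} Applying (\ref{key minor arcs estimate}) with $\mathcal{M}=\mathcal{M}_R$ gives
\[
\int_{\mathcal{M}_R}\int_0^1S(\boldsymbol{\alpha})\,d\boldsymbol{\alpha}\ll P^{-\mathfrak{B}+\epsilon}\,R^2P^{-d_2}\,\sup_{\alpha_2\in\mathcal{M}_R}|T^{(d_2-1)}(\boldsymbol{\alpha})|^{2^{-d_2+1}}.
\]
Here $\mathfrak{B}=\min\{\mathfrak{C},d_1\}$ with $\mathcal{N}_{d_2-2}(P)\ll P^{-\mathfrak{C}+\epsilon}$, which is available from Lemma \ref{the first lemma in subsection 3.2} under the standing hypothesis on $n-B_1$.

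For $\alpha_2\notin\mathfrak{M}_2(R)$, alternative (i) of Lemma \ref{lem4242} fails with $Q=R/\log P$. That lemma then yields
\[
|T^{(d_2-1)}|\ll R^{-(n-B_2)/(d_2-1)}P^{\epsilon}.
\]

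\textbf{Combining and summing.} After taking the $2^{-d_2+1}$ power, each piece contributes
\[
P^{-d_2-\mathfrak{B}+\epsilon}\,R^{\,2-1/s_2}, \qquad\text{where } s_2=\frac{2^{d_2-1}(d_2-1)}{n-B_2}.
\]
The exponent $2-1/s_2$ is negative as soon as $2s_2<1$, which follows from $N_1(n)<1$. Summing over the $O(\log P)$ dyadic ranges, the total is then dominated by the smallest value $R=Q_2^{(2)}$:
\[
\mathcal{I}_1\ll P^{-d_2-\mathfrak{B}+\epsilon}\,(Q_2^{(2)})^{2-1/s_2}.
\]

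\textbf{Main obstacle: recovering the full factor $P^{-d_1}$.} The target is $P^{-d_1-d_2-\delta}$, so we need
\[
(Q_2^{(2)})^{1/s_2-2}\geq P^{\,d_1-\mathfrak{B}+\delta}.
\]
This forces $Q_2^{(2)}$ to be large, roughly $Q_2^{(2)}=P^{(d_1-\mathfrak{B})s_2/(1-2s_2)+\delta'}$.

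At the same time, $Q_2^{(2)}$ must stay small enough that the later pieces $\mathcal{I}_2,\mathcal{I}_3$ can still be handled, using Lemma \ref{lem4.3} and conditions (a)–(d) of (\ref{important condition on the heights}). The hypothesis $N_2(n,\mathfrak{B})<1$ is precisely what makes both requirements compatible. Its term $(1-\mathfrak{B}/d_1)s_2$ reflects the loss $P^{d_1-\mathfrak{B}}$ from $\mathcal{N}_{d_2-2}$, and its $-2\mathfrak{B}s_2/d_1$ correction reflects how large $Q_2^{(2)}$ may be taken.

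I therefore plan to fix $Q_2^{(2)}$ as above and then check the exponent inequality directly from $N_1(n)<1$ and $N_2(n,\mathfrak{B})<1$. The rest of the argument is routine. The delicate step is verifying that this choice of $Q_2^{(2)}$ is consistent with the Subsection \ref{subsec 4.2} parameters, in particular that $Q_2^{(2)}\leq P^{d_2-1}$ and that conditions (c)–(d) hold.
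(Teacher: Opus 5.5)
Your reduction matches the paper's proof of Proposition~\ref{prop5.1} up to the last step. You use the same dyadic pieces $\mathfrak{M}_2(2R)\setminus\mathfrak{M}_2(R)$ times $[0,1)$, and the same bound (\ref{goal in section 3.1}) combined with the defining property $\mathcal{N}_{d_2-2}(P)\ll P^{-\mathfrak{B}+\epsilon}$ of $\mathfrak{B}$. (Lemma \ref{the first lemma in subsection 3.2} and its hypothesis on $n-B_1$ are not needed, since $\mathfrak{B}$ is by definition an admissible exponent.) You then apply Lemma \ref{lem4242} with $Q=R/\log P$ and sum a convergent geometric series, since $N_1(n)<1$ gives $s_2<1/3$. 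This leaves $\mathcal{I}_1\ll P^{-d_2-\mathfrak{B}+\epsilon}(Q_2^{(2)})^{2-1/s_2}$.

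The gap is the final step. $Q_2^{(2)}$ is not a free parameter in this proposition. It is fixed in Subsection \ref{subsec 4.2} by $P^{\mathfrak{B}}(Q_2^{(2)})^{1/s_2}=(Q_1^{(2)}/Q_2^{(2)})^{a/(d_1-1)}=(P/Q_2^{(2)})^{a}$, with $a=(n-B_1)/2^{d_1-1}$. Three things depend on that value: the set $\mathfrak{N}_2$ defining $\mathcal{I}_1$; the choice $\eta=\min(1/8,\theta/2)$, on which $\mathfrak{M}(Q)\subseteq\mathfrak{N}_2$ rests; and the verification of (\ref{conditions for ABCD}) in Proposition \ref{prop5.3}. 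Suppose you instead put $Q_2^{(2)}$ just above $P^{(d_1-\mathfrak{B})s_2/(1-2s_2)}$. Then the $\mathcal{I}_1$ bound holds by construction, so, contrary to your plan, $N_2$ plays no role in it. But you are then proving a statement about a different $\mathcal{I}_1$. You would have to redo $\eta$, $Q_1^{(2)}$ and the treatment of $\mathcal{I}_2$ and $\mathcal{I}_3$, and you carry out none of this.

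What is actually missing is a two-line check for the paper's value.
\begin{itemize}
\item By the first equality of the identity, $P^{-\mathfrak{B}}(Q_2^{(2)})^{2-1/s_2}=(Q_2^{(2)})^{2}(Q_1^{(2)}/Q_2^{(2)})^{-a/(d_1-1)}=(Q_2^{(2)})^{2}(P/Q_2^{(2)})^{-a}$. This is how the paper reduces (\ref{Bound 2}) to (\ref{Bound 1}).
\item Write $b=1/s_2$ and $Q_2^{(2)}=P^{\theta}$ with $\theta=(a-\mathfrak{B})/(a+b)$. This is positive, since $N_1(n)<1$ forces $a>2(d_1-1)\geq d_1\geq\mathfrak{B}$.
\item The required inequality is $-\mathfrak{B}+(2-b)\theta<-d_1$. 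Multiplying by $a+b$ turns it into $d_1a+d_1b+2a-\mathfrak{B}a-2\mathfrak{B}<ab$.
\item Dividing by $ab$, this is exactly $N_2(n,\mathfrak{B})<1$. Taking $\epsilon$ small then gives some $\delta>0$.
\end{itemize}

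Your heuristic is correct: $N_2(n,\mathfrak{B})<1$ is the compatibility condition between the lower bound on $Q_2^{(2)}$ forced by $\mathcal{I}_1$ and the upper bound forced by $\mathcal{I}_2$. The paper's balancing definition is the point where those two bounds coincide, so it lies inside that window automatically.
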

\begin{proof}
  For the proof of Proposition $\ref{prop5.1}$, we shall use the pruning argument. Note that
  \begin{equation*}
      \mathfrak{N}_1\setminus \mathfrak{N}_2=\bigcup_{j=0}^J \left(\mathfrak{M}_2(2^{j+1}Q_2^{(2)})\times[0,1)\setminus \mathfrak{M}_2(2^{j}Q_2^{(2)})\times [0,1)\right),
  \end{equation*}
  with $J=O(\log P).$ For simplicity, we write 
\begin{equation*}
  \mathfrak{L}_j:=  \left(\mathfrak{M}_2(2^{j+1}Q_2^{(2)})\times [0,1)\setminus \mathfrak{M}_2(2^{j}Q_2^{(2)})\times [0,1)\right).
\end{equation*}
  Therefore, we see that
  \begin{equation}\label{bound for I_2 via the triagle inequality}
      \mathcal{I}_1\ll \dsum_{j=0}^J\biggl|\dint_{\mathfrak{L}_j}S(\boldsymbol{\alpha})d\boldsymbol{\alpha}\biggr|.
  \end{equation}
  Hence, we find by applying $(\ref{goal in section 3.1})$ together with the property of $\mathfrak{B}$ that
  \begin{equation}\label{bound 2 for I_2}
  \begin{aligned}
      \mathcal{I}_1&\ll \dsum_{j=0}^JP^{-\mathfrak{B}+\epsilon}\cdot \text{mes}(\mathfrak{L}_j)\cdot \sup_{\boldsymbol{\alpha}\in \mathfrak{L}_j}\left|T^{(d_2-1)}(\boldsymbol{\alpha})\right|^{2^{-d_2+1}}.
  \end{aligned}
  \end{equation}
  Since $$\text{mes}(\mathfrak{L}_j)\ll (2^{j}Q_2^{(2)})^2P^{-d_2}$$ and 
$$\sup_{\boldsymbol{\alpha}\in \mathfrak{L}_j}\left|T^{(d_2-1)}(\boldsymbol{\alpha})\right|^{2^{-d_2+1}}\ll (2^jQ_2^{(2)})^{-\frac{n-B_2}{2^{d_2-1}(d_2-1)}+\epsilon},$$
it follows by $(\ref{bound 2 for I_2})$ that
\begin{equation}\label{boundbound}
     \mathcal{I}_1\ll \dsum_{j=0}^JP^{-\mathfrak{B}+\epsilon}\cdot (2^{j}Q_2^{(2)})^2P^{-d_2}\cdot (2^jQ_2^{(2)})^{-\frac{n-B_2}{2^{d_2-1}(d_2-1)}+\epsilon}.
\end{equation}
 Meanwhile, whenever $N_1(n)<1,$ one has
  \begin{equation*}
      2- \frac{n-B_2}{2^{d_2-1}(d_2-1)}+\epsilon<0.
  \end{equation*}
Therefore, we conclude from ($\ref{boundbound}$) that whenever $N_2(n,\mathfrak{B})<1$, one has
\begin{equation*}
    \mathcal{I}_1\ll P^{-d_2-d_1-\delta},
\end{equation*}
for some $\delta>0,$ where we used $(\ref{Bound 2})$.
\end{proof}

\bigskip

\begin{prop}\label{prop5.2}
Let $\mathfrak{B}$ be a positive number given in  $(\ref{definition of B}).$ Then, whenever  $N_2(n,\mathfrak{B})<1,$ one has
\begin{equation}\label{pruning piece2}
\begin{aligned}
   \mathcal{I}_2\ll P^{-d_2-d_1-\delta},
\end{aligned}
\end{equation}
for some $\delta>0.$
\end{prop}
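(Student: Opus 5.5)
We bound $\mathcal{I}_2$ by the same dyadic pruning used for Proposition \ref{prop5.1}, but on the $\alpha_1$-side: the height of $\alpha_2$ is already controlled, and we exploit the minor-arc information on $\alpha_1$ through Lemma \ref{lem4.3} and Remark \ref{remark 1}.

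\emph{Dyadic decomposition.} Take $\boldsymbol{\alpha}\in\mathfrak{N}_2\setminus\mathfrak{N}_3$. Then $\alpha_2\in\mathfrak{M}_2(Q_2^{(2)})$, so there is $q\leq Q_2^{(2)}$ with $\|q\alpha_2\|\leq Q_2^{(2)}P^{-d_2}$. We split $\mathfrak{M}_2(Q_2^{(2)})$ dyadically as $\mathfrak{M}_2(2^{k+1}R)\setminus\mathfrak{M}_2(2^kR)$ for $1\leq 2^kR\leq Q_2^{(2)}$, which costs only $O(\log P)$ pieces. We also split according to a dyadic parameter $Q^*$ with $Q_1^{(2)}/Q_2^{(2)}\leq Q^*\leq (P/Q_2^{(2)})^{d_1-1}$. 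Condition (d) in (\ref{important condition on the heights}) is exactly what makes this range admissible in Lemma \ref{lem4.3}.

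\emph{Applying Lemma \ref{lem4.3} on each piece.} On each piece we use Lemma \ref{lem4.3} with $Q=Q_2^{(2)}$ (or the dyadic height $2^{k+1}R$). Since $\boldsymbol{\alpha}\notin\mathfrak{N}_3$, alternative (i) cannot hold with $qq^*\leq Q_1^{(2)}$ and $\|qq^*\alpha_1\|\leq Q_1^{(2)}P^{-d_1}$; note that $q\mid qq^*$ matches the divisibility built into $\mathfrak{M}_{2,1}$. Lemma \ref{lem4.3} together with Remark \ref{remark 1} therefore gives
\[
|S(\boldsymbol{\alpha})|\ll (Q^*)^{-\frac{n-B_1}{2^{d_1-1}(d_1-1)}}P^{\epsilon}
\]
on the set where $\alpha_1$ has height between $Q^*$ and $2Q^*$ relative to $q$. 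The $\alpha_1$-measure of that set, for fixed $q$, is $\ll (qQ^*)^2P^{-d_1}$ (more precisely, $qQ^*$ choices of denominator times width $qQ^*P^{-d_1}$, up to $P^\epsilon$). The $\alpha_2$-measure is $\ll (2^kR)^2P^{-d_2}$.

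\emph{Combining.} Summing over the pieces gives a bound of the shape
\[
P^{-d_1-d_2+\epsilon}\,(Q_2^{(2)})^{c}\,(Q^*)^{2-\frac{n-B_1}{2^{d_1-1}(d_1-1)}}.
\]
The exponent of $Q^*$ is negative because $2s_1<1$, and $N_2(n,\mathfrak{B})<1$ implies the weaker requirement $d_1\cdot 2^{d_1-1}/(n-B_1)<1$. The worst case is then $Q^*=Q_1^{(2)}/Q_2^{(2)}$, which leaves a bound of the form $P^{-d_1-d_2+\epsilon}(Q_2^{(2)})^{a}(Q_1^{(2)})^{-b}$. It remains to check that the choices of $Q_1^{(2)},Q_2^{(2)}$ made in Subsection \ref{subsec 4.2} turn this into $P^{-d_1-d_2-\delta}$. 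That is where $N_2(n,\mathfrak{B})<1$ enters: the term $\frac{2^{d_1-1}}{n-B_1}(1-2\mathfrak{B}s_2/d_1)$ records the saving in $Q_1^{(2)}\approx P^{d_1(1-2\mathfrak{B}s_2/d_1)}$-type height, and $2s_2$ records the loss from the $\alpha_2$-measure.

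\emph{Main obstacle.} Two points need care. The first is the bookkeeping of the $q$-dependence in the $\alpha_1$-measure, which must not cost an extra factor $Q_2^{(2)}$. I would first try summing over $q$ and $q^*$ jointly via $qq^*\leq Q_1$, as in (\ref{measure}). The second is the final exponent check against the specific choices in Subsection \ref{subsec 4.2}. If the crude measure bound loses too much, the fallback is to pair the pointwise bound from Lemma \ref{lem4.3} with the $\alpha_2$-information from Lemma \ref{lem4242}, via a geometric mean of the two bounds.
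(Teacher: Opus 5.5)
There is a genuine gap: your $\alpha_1$-decomposition does not cover $\mathfrak{N}_2\setminus\mathfrak{N}_3$, and the part it leaves out is the whole problem.

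Your pieces (height between $Q^*$ and $2Q^*$ relative to $q$, with $Q^*\leq (P/Q_2^{(2)})^{d_1-1}$) only exhaust the $\alpha_1$ lying on arcs $\|qq^*\alpha_1\|\ll Q_2^{(2)}Q^*P^{-d_1}$ with $q^*\ll (P/Q_2^{(2)})^{d_1-1}$. The remaining, generic, $\alpha_1$ are never estimated. Lemma $\ref{lem4.3}$ with $Q=Q_2^{(2)}$ does not allow $Q^*>(P/Q_2^{(2)})^{d_1-1}$. So on those $\alpha_1$ you have only the pointwise bound at that top height, together with the trivial $\alpha_1$-measure $1$, and no factor $P^{-d_1}$ can come from measure. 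Any pruning in $\alpha_1$ has to end with such a full-measure top piece.

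In fact the second equality in $(\ref{Q_1^{(2)} and Q_2^{(2)} identity})$ says exactly $Q_1^{(2)}/Q_2^{(2)}=(P/Q_2^{(2)})^{d_1-1}$. So condition (d) of $(\ref{important condition on the heights})$ is an equality for $i=2$, your $Q^*$-range is a single point, and the uncovered part is essentially all of $\mathfrak{N}_2\setminus\mathfrak{N}_3$. The bound $P^{-d_1-d_2+\epsilon}(Q_2^{(2)})^{c}(Q^*)^{2-\frac{n-B_1}{2^{d_1-1}(d_1-1)}}$ is therefore not justified. Even on genuine arcs the $\alpha_1$-measure is $\ll Q(Q^*)^2P^{-d_1}$ with $Q$ the $\alpha_2$-height, not $(qQ^*)^2P^{-d_1}$. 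The final exponent check is also never carried out. Your heuristic $Q_1^{(2)}\approx P^{d_1-2\mathfrak{B}s_2}$ is not right either: the identity gives $Q_1^{(2)}=P^{d_1-1}(Q_2^{(2)})^{2-d_1}$.

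No decomposition is needed, and this is what the paper does. For $\boldsymbol{\alpha}\in\mathfrak{N}_2\setminus\mathfrak{N}_3$, take the $q\leq Q_2^{(2)}$ attached to $\alpha_2$. Apply Lemma $\ref{lem4.3}$ through Remark $\ref{remark 1}$ with $Q=Q_2^{(2)}$ and $Q^*=Q_1^{(2)}/(Q_2^{(2)}\log P)$; this choice is admissible precisely because of the identity above.

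Alternative (i) would give $q^*\leq Q_1^{(2)}/Q_2^{(2)}$ with $\|qq^*\alpha_1\|\leq Q_1^{(2)}P^{-d_1}$. Then $q_2=q$, $q_1=qq^*$ would place $\boldsymbol{\alpha}$ in $\mathfrak{N}_3$. Hence alternative (ii) holds at every point, i.e.\ $|S(\boldsymbol{\alpha})|\ll (Q_1^{(2)}/Q_2^{(2)})^{-\frac{n-B_1}{2^{d_1-1}(d_1-1)}+\epsilon}$ uniformly on $\mathfrak{N}_2\setminus\mathfrak{N}_3$.

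Multiplying by $\text{mes}(\mathfrak{N}_2)\ll (Q_2^{(2)})^2P^{-d_2}$ leaves precisely the quantity in $(\ref{Bound 1})$. There the saving $P^{-d_1-\delta}$ comes entirely from the pointwise bound, and Subsection $\ref{subsec 4.2}$ shows that $(\ref{Bound 1})$ follows from $N_2(n,\mathfrak{B})<1$. Neither $2s_1<1$ nor your fallback through Lemma $\ref{lem4242}$ is needed.
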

\begin{proof}
We claim that whenever $\boldsymbol{\alpha}=(\alpha_2,\alpha_1)\in \mathfrak{N}_2\setminus \mathfrak{N}_3,$ one has
\begin{equation}\label{minor arcs bound for S(alpha) proposition 3.8}
    S(\boldsymbol{\alpha})\ll (Q_1^{(2)}/Q_2^{(2)})^{-\frac{n-B_1}{2^{d_1-1}(d_1-1)}+\epsilon}. 
\end{equation}
Indeed, suppose that for $\boldsymbol{\alpha}=(\alpha_2,\alpha_1)\in \mathfrak{N}_2\setminus \mathfrak{N}_3,$ one has
\begin{equation}\label{assumption for contradiction}
     |S(\boldsymbol{\alpha})|\geq (Q_1^{(2)}/Q_2^{(2)})^{-\frac{n-B_1}{2^{d_1-1}(d_1-1)}+\epsilon}.
\end{equation}
Since $\alpha_2\in \R$, for which there exists $\alpha_1$ with $\boldsymbol{\alpha}=(\alpha_2,\alpha_1)\in \mathfrak{N}_2\setminus \mathfrak{N}_3$, satisfies the hypothesis of Lemma $\ref{lem4.3}$ with $Q=Q_2^{(2)}$ and $Q^*=Q_1^{(2)}/(Q_2^{(2)}\log P)$, it follows by Remark $\ref{remark 1}$ of Lemma $\ref{lem4.3}$ together with the assumption $(\ref{assumption for contradiction})$ that there exists $q^*\in \mathbb{N}$ with $1\leq q^*\leq Q_1^{(2)}/Q_2^{(2)}$ such that
\begin{equation*}
    \|qq^*\alpha_1\|\leq Q_1^{(2)}P^{-d_1}.
\end{equation*}
This means that $\boldsymbol{\alpha}=(\alpha_2,\alpha_1)\in \mathfrak{N}_3,$ which contradicts $\boldsymbol{\alpha}\in \mathfrak{N}_2\setminus\mathfrak{N}_3.$ Therefore, we have verified the inequality $(\ref{minor arcs bound for S(alpha) proposition 3.8}).$ 

We conclude thus that whenever $N_2(n,\mathfrak{B})<1,$ one has
\begin{equation*}
\begin{aligned}
    \mathcal{I}_2=\dint_{\mathfrak{N}_2\setminus \mathfrak{N}_3}|S(\boldsymbol{\alpha})|d\boldsymbol{\alpha}&\ll \text{mes}(\mathfrak{N}_2\setminus \mathfrak{N}_3)\cdot \sup_{\boldsymbol{\alpha}\in \mathfrak{N}_2\setminus \mathfrak{N}_3}|S(\boldsymbol{\alpha})|\\
    &\ll \text{mes}(\mathfrak{M}_2(Q_2^{(2)}))\cdot (Q_1^{(2)}/Q_2^{(2)})^{-\frac{n-B_1}{2^{d_1-1}(d_1-1)}+\epsilon}\\
    &\ll P^{-d_2-d_1-\delta},
\end{aligned}
\end{equation*}
for some $\delta>0,$ where we have used $(\ref{Bound 1})$ for the last inequality.
\end{proof}

\bigskip

\begin{prop}\label{prop5.3}
Whenever $N_1(n)<1,$ one has
\begin{equation}\label{pruning piece3}
\begin{aligned}
   \mathcal{I}_3\ll P^{-d_2-d_1-\delta},
\end{aligned}
\end{equation}
for some $\delta>0.$
\end{prop}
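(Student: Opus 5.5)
We bound $\mathcal{I}_3$ by the same dyadic pruning used for $\mathcal{I}_1$, but now over the two-parameter family of sets $\mathfrak{M}_{2,1}(Q_2,Q_1)$. We work directly with $|S(\boldsymbol{\alpha})|$ and need no information on $\mathcal{N}_{d_2-2}(P)$, which is why only $N_1(n)<1$ enters.

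\textbf{Decomposition.} Write $\mathfrak{N}_3\setminus\mathfrak{N}_4$ as a union of $O((\log P)^2)$ pieces
\begin{equation*}
\mathfrak{L}_{j,k}:=\mathfrak{M}_{2,1}(2^{j+1}Q_2^{(1)},2^{k+1}Q_1^{(1)})\setminus\bigl(\mathfrak{M}_{2,1}(2^{j}Q_2^{(1)},Q_1)\cup\mathfrak{M}_{2,1}(Q_2,2^{k}Q_1^{(1)})\bigr),
\end{equation*}
where $2^jQ_2^{(1)}\le Q_2^{(2)}$ and $2^kQ_1^{(1)}\le Q_1^{(2)}$. Thus on $\mathfrak{L}_{j,k}$ the pair $\boldsymbol{\alpha}$ has ``heights'' of size $R_2\asymp 2^jQ_2^{(1)}$ and $R_1\asymp 2^kQ_1^{(1)}$. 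Here $q_2\mid q_1$, $\|q_2\alpha_2\|\le R_2P^{-d_2}$ and $\|q_1\alpha_1\|\le R_1P^{-d_1}$. By (\ref{measure}),
\begin{equation*}
\text{mes}(\mathfrak{L}_{j,k})\ll P^{-d_1-d_2}R_1^{2+\epsilon}R_2.
\end{equation*}

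\textbf{Pointwise bound.} On $\mathfrak{L}_{j,k}$ we aim for
\begin{equation*}
|S(\boldsymbol{\alpha})|\ll P^{\epsilon}\min\bigl\{R_2^{-\frac{n-B_2}{2^{d_2-1}(d_2-1)}},\,(R_1/R_2)^{-\frac{n-B_1}{2^{d_1-1}(d_1-1)}}\bigr\}.
\end{equation*}
\begin{enumerate}
\item For the first entry, use $|S|^{2^{d_2-1}}\ll T^{(d_2-1)}$ (Cauchy--Schwarz) together with Lemma \ref{lem4242} at height $\asymp R_2$. Alternative (i) of that lemma would place $\alpha_2$ in a smaller major arc, contradicting the definition of $\mathfrak{L}_{j,k}$ up to $\log P$ losses, which are absorbed by adjusting dyadic ranges.
\item For the second entry, argue as in Proposition \ref{prop5.2}. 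Apply Remark \ref{remark 1} of Lemma \ref{lem4.3} with $Q=R_2$ and $Q^*\asymp R_1/(R_2\log P)$. Condition (d) of (\ref{important condition on the heights}) guarantees $Q^*\le(P/Q)^{d_1-1}$. A failure of the bound would produce $q^*$ with $\|qq^*\alpha_1\|\le R_1P^{-d_1}/2$. Then $(q,qq^*)$ would exhibit $\boldsymbol{\alpha}$ in a smaller set $\mathfrak{M}_{2,1}$, which is a contradiction.
\end{enumerate}

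\textbf{Summation.} Interpolate the minimum with weights $\theta,1-\theta$ chosen to kill the growth of $R_1^{2}R_2$. Using $R_1^2R_2=(R_1/R_2)^2R_2^3$, we need
\begin{equation*}
(1-\theta)\tfrac{n-B_1}{2^{d_1-1}(d_1-1)}>2,\qquad \theta\tfrac{n-B_2}{2^{d_2-1}(d_2-1)}>3,
\end{equation*}
after accounting for the ratio. In terms of (\ref{sisi}) this is solvable exactly when $2\cdot\frac{2^{d_1-1}(d_1-1)}{n-B_1}+3\cdot\frac{2^{d_2-1}(d_2-1)}{n-B_2}<1$, which is $2s_1+s_2=N_1(n)<1$. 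Then each piece contributes $P^{-d_1-d_2}(R_1^{2}R_2)^{-\delta'}\le P^{-d_1-d_2}(Q_1^{(1)})^{-\delta'}$. Since $Q_1^{(1)}$ is a positive power of $P$ (to be fixed in Subsection \ref{subsec 4.2}), summing over $O((\log P)^2)$ pieces gives $\mathcal{I}_3\ll P^{-d_1-d_2-\delta}$.

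\textbf{Main obstacle.} The delicate step is the pointwise bound on $\mathfrak{L}_{j,k}$. One must check that alternative (i) of Lemmas \ref{lem4242} and \ref{lem4.3} really yields membership in a strictly smaller $\mathfrak{M}_{2,1}$, including the divisibility $q_2\mid q_1$ and the $\log P$ slack. One must also verify that the exponent bookkeeping matches $N_1(n)$ exactly. If the interpolation falls short, the first thing to try is to bound using $\text{mes}\ll P^{-d_1-d_2}R_1^{2}R_2$ with the ratio $R_1/R_2$ absorbing only $R_1^2/R_2^2$, and the $R_2^3$ factor handled by the $\alpha_2$-bound.
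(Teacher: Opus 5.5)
Your proof is correct in outline and arrives at exactly the paper's condition, but the decomposition is different from the paper's. Step (1), as you wrote it, still needs one extra argument, given below.

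\textbf{How the two routes compare.} Write $\lambda_1=\frac{n-B_1}{2^{d_1-1}(d_1-1)}$ and $\lambda_2=\frac{n-B_2}{2^{d_2-1}(d_2-1)}$, so that $N_1(n)=2/\lambda_1+3/\lambda_2$.

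The paper never uses a two-dimensional grid. It defines $Q_1^*$ by $(Q_2^{(2)})^{\lambda_2}=(Q_1^*/Q_2^{(2)})^{\lambda_1}$, and then:
\begin{enumerate}
\item it handles $\mathfrak{N}_3\setminus\mathfrak{M}_{2,1}(Q_2^{(2)},Q_1^*)$ by a one-parameter dyadic decomposition in $Q_1$ at fixed $Q_2=Q_2^{(2)}$, using Lemma \ref{lem4.3} alone;
\item it handles the rest by a chain of sets $\mathfrak{M}_{2,1}(A,B)$ moving along the curve $A^{\lambda_2}=(B/A)^{\lambda_1}$ from $(Q_2^{(1)},Q_1^{(1)})$ to $(Q_2^{(2)},Q_1^*)$, splitting each difference through $\mathfrak{M}_{2,1}(C,B)$.
\end{enumerate}
On that curve the $\alpha_2$-bound and the $\alpha_1$-bound coincide. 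So on each piece it is enough to have whichever alternative the lemmas return, and everything reduces to \eqref{identity for A and B}.

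Your grid together with $\min\{X,Y\}\le X^{\theta}Y^{1-\theta}$ yields the same condition. It dispenses with $Q_1^*$, the constants $\mathfrak{c}_i,\mathfrak{d}_i$, the piece-by-piece check of \eqref{conditions for ABCD}, and any need for $(Q_2^{(1)},Q_1^{(1)})$ to lie on the curve. For Lemma \ref{lem4.3} you only need $R_1R_2^{d_1-2}\ll Q_1^{(2)}(Q_2^{(2)})^{d_1-2}\le P^{d_1-1}$, which follows from condition (d). The price is that you need both bounds to hold simultaneously on every cell.

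\textbf{The step that needs more.} That price appears in step (1), which is not yet a contradiction. The cell $\mathfrak{L}_{j,k}$ excludes $\mathfrak{M}_{2,1}(2^jQ_2^{(1)},2^{k+1}Q_1^{(1)})$, not $\mathfrak{M}_2(2^jQ_2^{(1)})\times[0,1)$. So the new denominator produced by Lemma \ref{lem4242} must be shown to be compatible with some $q_1$. Building $q_1=qq^*$ through Lemma \ref{lem4.3} keeps the divisibility, but it only returns the $\alpha_1$-type bound. That is enough on the paper's curve, but not for your interpolation.

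Use uniqueness of rational approximations instead. Put $L=2^jQ_2^{(1)}$ and apply Lemma \ref{lem4242} with $Q=L/\log P$.
\begin{itemize}
\item Alternative (i) gives $q\alpha_2=a+\theta$ with $q\le L$ and $|\theta|\le LP^{-d_2}$.
\item The cell gives $q_2\alpha_2=a_2+\theta_2$ with $q_2\le 2L$, $|\theta_2|\le 2LP^{-d_2}$ and $q_2\mid q_1$.
\end{itemize}
Then $|q_2a-qa_2|=|q\theta_2-q_2\theta|\le 4L^2P^{-d_2}<1$, because $L\le Q_2^{(2)}\le P$ and $d_2\ge 4$. Hence $a/q=a_2/q_2$. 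Its reduced denominator $q'$ divides $q_2$, and therefore $q_1$, and satisfies $\|q'\alpha_2\|\le|\theta|\le LP^{-d_2}$. Thus the pair $(q',q_1)$ places $\boldsymbol{\alpha}$ in the excluded set, which is the contradiction you wanted.

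\textbf{Two bookkeeping repairs.}
\begin{itemize}
\item Cells with $j,k\ge 0$ miss points whose minimal level has $j=0$ or $k=0$. Let $j,k$ run down to height $\asymp 1$; this adds only $O(\log P)$ values. Since $\boldsymbol{\alpha}\notin\mathfrak{N}_4$, every relevant cell has $R_2>Q_2^{(1)}$ or $R_1>Q_1^{(1)}$, and this still gives a power saving.
\item On cells with $R_1<R_2$ the interpolation fails, because the factor $(R_1/R_2)^{2-(1-\theta)\lambda_1}$ exceeds $1$. There you can use $R_1^2R_2\cdot R_2^{-\lambda_2}\le R_2^{3-\lambda_2}$ directly, with $\lambda_2>3$, which follows from $N_1(n)<1$.
\end{itemize}
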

\begin{proof}
For the proof of Proposition $\ref{prop5.3}$, we shall use a more delicate pruning argument than that used in the proof of Proposition $\ref{prop5.1}$. 

Let us temporarily define $Q_1^*$ from the equality 
\begin{equation}\label{Q_1^* and Q_2^{(2)} identity}
    (Q_2^{(2)})^{\frac{n-B_2}{2^{d_2-1}(d_2-1)}}=(Q_1^*/Q_2^{(2)})^{\frac{n-B_1}{2^{d_1-1}(d_1-1)}}.
\end{equation}
By writing $\mathfrak{N}^*=\mathfrak{M}_{2,1}(Q_2^{(2)},Q_1^*),$ we see that 
\begin{equation}\label{I_4}
    \mathcal{I}_3=\int_{\mathfrak{N}_3\setminus \mathfrak{N}_4}|S(\boldsymbol{\alpha})|d\boldsymbol{\alpha}\ll \int_{\mathfrak{N}_3\setminus \mathfrak{N}^*}|S(\boldsymbol{\alpha})|d\boldsymbol{\alpha}+ \int_{\mathfrak{N}^*\setminus \mathfrak{N}_4}|S(\boldsymbol{\alpha})|d\boldsymbol{\alpha}.  
\end{equation}
For simplicity, we write $\mathcal{J}_1$ and $\mathcal{J}_2$ for the first and second term on the right-hand side of $(\ref{I_4})$, respectively.

First, we investigate $\mathcal{J}_1.$ By dyadic decomposition, we deduce that
\begin{equation}\label{3.813.81}
    \mathcal{J}_1\ll \dsum_{j=0}^J \int_{\mathfrak{M}_{2,1}(Q_2^{(2)},2^{j+1}Q_1^*)\setminus \mathfrak{M}_{2,1}(Q_2^{(2)},2^jQ_1^*)}|S(\boldsymbol{\alpha})|d\boldsymbol{\alpha},
\end{equation}
with $J=O(\text{log}P).$ Meanwhile, it follows by Lemma $\ref{lem4.3}$ together with $(\ref{measure})$ that 
\begin{equation}\label{3.823.82}
\begin{aligned}
    &\int_{\mathfrak{M}_{2,1}(Q_2^{(2)},2^{j+1}Q_1^*)\setminus \mathfrak{M}_{2,1}(Q_2^{(2)},2^jQ_1^*)}|S(\boldsymbol{\alpha})|d\boldsymbol{\alpha}\\
    &\ll (2^jQ_1^*)^2\cdot Q_2^{(2)}\cdot P^{-d_1-d_2}\cdot (2^jQ_1^*/Q_2^{(2)})^{-\frac{n-B_1}{2^{d_1-1}(d_1-1)}+\epsilon}\\
    &= (Q_1^*)^2\cdot Q_2^{(2)}\cdot P^{-d_1-d_2}\cdot (Q_1^*/Q_2^{(2)})^{-\frac{n-B_1}{2^{d_1-1}(d_1-1)}+\epsilon}\cdot 2^{2j-\frac{n-B_1}{2^{d_1-1}(d_1-1)}j+j\epsilon}.
\end{aligned}
\end{equation}
Notice here that whenever $n-B_1>2^{d_1}(d_1-1)$, one has 
\begin{equation}\label{3.833.83}
2-\frac{n-B_1}{2^{d_1-1}(d_1-1)}+\epsilon<0.
\end{equation}
Then, by $(\ref{3.813.81})$ together with $(\ref{3.823.82})$ and $(\ref{3.833.83})$, we find that
\begin{equation}\label{J_1 estimate}
\begin{aligned}
    \mathcal{J}_1&\ll \dsum_{j=0}^J \int_{\mathfrak{M}_{2,1}(Q_2^{(2)},2^{j+1}Q_1^*)\setminus \mathfrak{M}_{2,1}(Q_2^{(2)},2^jQ_1^*)}|S(\boldsymbol{\alpha})|d\boldsymbol{\alpha}\\
&\ll \dsum_{j=0}^J(Q_1^*)^2\cdot Q_2^{(2)}\cdot P^{-d_1-d_2}\cdot (Q_1^*/Q_2^{(2)})^{-\frac{n-B_1}{2^{d_1-1}(d_1-1)}+\epsilon}\cdot 2^{2j-\frac{n-B_1}{2^{d_1-1}(d_1-1)}j+j\epsilon}\\
&\ll (Q_1^*)^2\cdot Q_2^{(2)}\cdot P^{-d_1-d_2}\cdot (Q_1^*/Q_2^{(2)})^{-\frac{n-B_1}{2^{d_1-1}(d_1-1)}+\epsilon}\\
&=(Q_1^*)^2\cdot Q_2^{(2)}\cdot P^{-d_1-d_2}\cdot (Q_2^{(2)})^{-\frac{n-B_2}{2^{d_2-1}(d_2-1)}+\epsilon}\\
&\ll P^{-d_1-d_2-\delta},
\end{aligned}
\end{equation}
for some $\delta>0$, where we used $(\ref{identity for A and B})$ with $A,B$ replaced by $Q_2^{(2)}$ and $Q_1^*$, for the last inequality. 

Next, we investigate $\mathcal{J}_2.$ We begin by claiming the following. Let  $A,B,C$ and $D$ be positive numbers with $C\leq A\leq 2C$ and $D\leq B\leq 2D$ and
\begin{equation}\label{relation A B}
    A^{\frac{n-B_2}{2^{d_2-1}(d_2-1)}}=(B/A)^{\frac{n-B_1}{2^{d_1-1}(d_1-1)}}
\end{equation}
and
\begin{equation}\label{relation C D}
    C^{\frac{n-B_2}{2^{d_2-1}(d_2-1)}}=(D/C)^{\frac{n-B_1}{2^{d_1-1}(d_1-1)}}.
\end{equation} Furthermore, assume that these quantities satisfy 
   \begin{equation}\label{conditions for ABCD}
       \begin{aligned}
          &(a)\  A\leq P^{}\\ 
          &(b)\  D/C\leq (P/C)^{d_1-1}\\
          &(c)\ \log A\asymp \log B\asymp \log C\asymp \log D\asymp \log P.
       \end{aligned}
   \end{equation}
Hence, by applying Lemma $\ref{lem4242}$ and $\ref{lem4.3}$ together with $(\ref{measure})$, we deduce that
\begin{equation}\label{claimclaimclaim}
\begin{aligned}
    &\int_{\mathfrak{M}_{2,1}(A,B)\setminus \mathfrak{M}_{2,1}(C,D)}|S(\boldsymbol{\alpha})|d\boldsymbol{\alpha}\\
    &\ll \int_{\mathfrak{M}_{2,1}(A,B)\setminus \mathfrak{M}_{2,1}(C,B)}|S(\boldsymbol{\alpha})|d\boldsymbol{\alpha}+\int_{\mathfrak{M}_{2,1}(C,B)\setminus \mathfrak{M}_{2,1}(C,D)}|S(\boldsymbol{\alpha})|d\boldsymbol{\alpha}\\
    &\ll B^2\cdot A\cdot P^{-d_1-d_2}\cdot A^{-\frac{n-B_2}{2^{d_2-1}(d_2-1)}+\epsilon}\\
    &\ll P^{-d_1-d_2-\delta},
\end{aligned}
\end{equation}
for some $\delta>0,$ where we used $(\ref{identity for A and B})$ for the last inequality. 

Let 
\begin{equation}\label{ABCD substitutions}
    A=Q_2^{(2)}, B=Q_1^*, C=\mathfrak{c}_1Q_2^{(2)}, D=\mathfrak{c}_2Q_1^*,
\end{equation} with positive constants $1/2\leq\mathfrak{c}_1,\mathfrak{c}_2\leq 1$ satisfying 
\begin{equation}\label{relation c1c2}
    \mathfrak{c}_1^{\frac{n-B_2}{2^{d_2-1}(d_2-1)}}=(\mathfrak{c}_2/\mathfrak{c}_1)^{\frac{n-B_1}{2^{d_1-1}(d_1-1)}}.
\end{equation}

We claim that $A,B,C,D$ in $(\ref{ABCD substitutions})$ satisfies $(\ref{relation A B}),$ $(\ref{relation C D})$ and $(\ref{conditions for ABCD}).$ It follows from $(\ref{Q_1^* and Q_2^{(2)} identity})$ and $(\ref{relation c1c2})$ that these quantities satisfy $(\ref{relation A B})$ and $(\ref{relation C D}).$
It remains to confirm $(\ref{conditions for ABCD})$. From $(\ref{Q_2^{(2)} quantity}),$ we obtain 
\begin{equation}
    Q_2^{(2)}=P^{\frac{a-\mathfrak{B}}{a+b}},
\end{equation}
with $a=\frac{n-B_1}{2^{d_1-1}}$ and $b=\frac{n-B_2}{2^{d_2-1}(d_2-1)}.$ Since $N_1(n)<1$ and $\mathfrak{B}\leq d_1$ by definition, we see that 
\begin{equation*}
    A=Q_2^{(2)}\leq P.
\end{equation*}
Furthermore, we find from $(\ref{relation C D})$ that
\begin{equation*}
    D/C=C^{\frac{b(d_1-1)}{a}}.
\end{equation*}
Hence, in order to confirm $(\ref{conditions for ABCD})(b)$, it suffices to show that 
\begin{equation*}
    C^{\frac{b(d_1-1)}{a}}\leq (P/C)^{d_1-1},
\end{equation*}
which is equivalent to 
\begin{equation}\label{small claim}
    C^{b/a}\leq P/C.
\end{equation}
Since $C\leq A=Q^{(2)}_2=P^{\frac{a-\mathfrak{B}}{a+b}}$, one finds that
\begin{equation}\label{inequalities of C}
    \begin{aligned}
        C^{1+\frac{b}{a}}\leq A^{1+\frac{b}{a}}=(Q_2^{(2)})^{1+\frac{b}{a}}=(P^{\frac{a-\mathfrak{B}}{a+b}})^{\frac{a+b}{a}}=P^{\frac{a-\mathfrak{B}}{a}}\leq P.
    \end{aligned}
\end{equation}
This implies $(\ref{small claim}).$ Therefore, we confirm the claim. Similarly, when 
\begin{equation}\label{ABCD substitutinos 2}
    A=\mathfrak{d}_1Q_2^{(1)}, B=\mathfrak{d}_2Q_1^{(1)}, C=Q_2^{(1)}, D=Q_1^{(1)},
\end{equation} 
with positive constants $1\leq\mathfrak{d}_1,\mathfrak{d}_2\leq 2$ satisfying 
\begin{equation}\label{relation d1d2}
    \mathfrak{d}_1^{\frac{n-B_2}{2^{d_2-1}(d_2-1)}}=(\mathfrak{d}_2/\mathfrak{d}_1)^{\frac{n-B_1}{2^{d_1-1}(d_1-1)}},
\end{equation}
it follows by $(\ref{relation d1d2})$ and $(\ref{Q_1^{(1)} and Q_2^{(1)} identity})$ that the quantities $A,B,C,D$ in (\ref{ABCD substitutinos 2}) satisfy $(\ref{relation A B})$ and $(\ref{relation C D}).$ Furthermore, since the quantity $C$ in (\ref{ABCD substitutinos 2}) is defined sufficiently  smaller than $C$ in (\ref{ABCD substitutions}), we find that the quantity $C$ in (\ref{ABCD substitutinos 2}) satisfies $(\ref{inequalities of C})$, and thus this confirms $(\ref{conditions for ABCD})(b).$

 By a pruning argument, we obtain
\begin{equation}\label{pruning argument in J2}
\mathcal{J}_2=\sum_{j=0}^J\dint_{\mathfrak{M}_{2,1}(\mathfrak{c}_1^{j+1}Q_2^{(1)},\mathfrak{c}_2^{j+1}Q_1^{(1)})\setminus \mathfrak{M}_{2,1}(\mathfrak{c}_1^{j}Q_2^{(1)},\mathfrak{c}_2^{j}Q_1^{(1)})}|S(\boldsymbol{\alpha})|d\boldsymbol{\alpha},
\end{equation}
with $J=O(\log P),$ for some positive constants $1\leq \mathfrak{c}_1, \mathfrak{c}_2\leq 2$ satisfying $(\ref{relation c1c2}).$
By the discussion in the previous paragraph, we infer that the quantities
\begin{equation}\label{def ABCD intermediate}
      A=\mathfrak{c}_1^{j+1}Q_2^{(1)}, B=\mathfrak{c}_2^{j+1}Q_1^{(1)}, C=\mathfrak{c}_1^{j}Q_2^{(1)}, D=\mathfrak{c}_2^{j}Q_1^{(1)}
\end{equation}
satisfy $(\ref{relation A B}),$ $(\ref{relation C D})$ and $(\ref{conditions for ABCD}).$ Hence, one has $(\ref{claimclaimclaim})$ with $A,B,C,D$ in $(\ref{def ABCD intermediate}).$ Thus, we see by (\ref{claimclaimclaim}) and $(\ref{pruning argument in J2})$ that
\begin{equation}\label{J_2 estimate}
    \mathcal{J}_2\ll P^{-d_1-d_2-\delta},
\end{equation}
for some $\delta>0.$
Therefore, we conclude from $(\ref{I_4})$ together with $(\ref{J_1 estimate})$ and $(\ref{J_2 estimate})$ that
\begin{equation*}
    \mathcal{I}_3\ll \mathcal{J}_1+\mathcal{J}_2\ll P^{-d_1-d_2-\delta},
\end{equation*}
for some $\delta>0.$
\end{proof}

\bigskip

    


\begin{proof}[Proof of Lemma $\ref{main lemma for minor arcs estimate}$]
   Recall the definition $(\ref{Q definition})$ of $Q$ with the value of $\eta$ defined  in section \ref{subsec 4.2} (see (\ref{definitino of eta})). Note by the definition of $Q_2^{(1)}$ and $Q_1^{(1)}$ in Section $\ref{subsec 4.2}$ that $\mathfrak{M}(Q)\subseteq \mathfrak{N}_2$ and
    $\mathfrak{N}_4\subseteq\mathfrak{M}(Q)$. Furthermore,  by Diophantine approximation theorem, we infer that 
   $ [0,1)^2= \mathfrak{N}_1$. Hence, one has
    \begin{equation*}
\dint_{\mathfrak{N}_1\setminus\mathfrak{N}_4} S(\boldsymbol{\alpha}) d\boldsymbol{\alpha}=\int_{[0,1)^2\setminus \mathfrak{M}(Q)}S(\boldsymbol{\alpha}) d\boldsymbol{\alpha}+\int_{\mathfrak{M}(Q)\setminus \mathfrak{N}_4}S(\boldsymbol{\alpha}) d\boldsymbol{\alpha}.
    \end{equation*}
Therefore, we find by the triangle inequality that 
\begin{equation*}
    \begin{aligned}
       \biggl| \int_{\mathfrak{m}(Q)}S(\boldsymbol{\alpha}) d\boldsymbol{\alpha}\biggl|&=\biggl|\int_{[0,1)^2\setminus \mathfrak{M}(Q)}S(\boldsymbol{\alpha}) d\boldsymbol{\alpha}\biggl|\\
        &=\biggl|\dint_{\mathfrak{N}_1\setminus\mathfrak{N}_4} S(\boldsymbol{\alpha}) d\boldsymbol{\alpha}-\int_{\mathfrak{M}(Q)\setminus \mathfrak{N}_4}S(\boldsymbol{\alpha}) d\boldsymbol{\alpha}\biggl|\\
&\leq \biggl|\dint_{\mathfrak{N}_1\setminus\mathfrak{N}_4} S(\boldsymbol{\alpha}) d\boldsymbol{\alpha}\biggl|+\int_{\mathfrak{M}(Q)\setminus \mathfrak{N}_4}|S(\boldsymbol{\alpha}) |d\boldsymbol{\alpha}.
    \end{aligned}
\end{equation*}
Since $\mathfrak{M}(Q)\subseteq \mathfrak{N}_2,$ it follows by Proposition $\ref{prop5.1}$, $\ref{prop5.2}$ and $\ref{prop5.3}$ that 
\begin{equation*}
\begin{aligned}
      \biggl|\int_{\mathfrak{m}(Q)}S(\boldsymbol{\alpha}) d\boldsymbol{\alpha}\biggl|&\leq \biggl|\dint_{\mathfrak{N}_1\setminus\mathfrak{N}_4} S(\boldsymbol{\alpha}) d\boldsymbol{\alpha}\biggl|+\int_{\mathfrak{M}(Q)\setminus \mathfrak{N}_4}|S(\boldsymbol{\alpha}) |d\boldsymbol{\alpha}\\
      &\ll \biggl|\dint_{\mathfrak{N}_1\setminus\mathfrak{N}_4} S(\boldsymbol{\alpha}) d\boldsymbol{\alpha}\biggl|+\int_{\mathfrak{N}_2\setminus \mathfrak{N}_4}|S(\boldsymbol{\alpha}) |d\boldsymbol{\alpha}\\
     &\ll \biggl|\dint_{\mathfrak{N}_1\setminus\mathfrak{N}_4} S(\boldsymbol{\alpha}) d\boldsymbol{\alpha}\biggl|+\int_{\mathfrak{N}_2\setminus \mathfrak{N}_3}|S(\boldsymbol{\alpha})|d\boldsymbol{\alpha}+\int_{\mathfrak{N}_3\setminus \mathfrak{N}_4}|S(\boldsymbol{\alpha}) |d\boldsymbol{\alpha} \\
      &\ll P^{-d_1-d_2-\delta},
\end{aligned}
\end{equation*}
     for some $\delta>0.$ 
   This completes the proof of Lemma $\ref{main lemma for minor arcs estimate}$.
\end{proof}

\bigskip

\subsection{Quantities $Q_j^{(i+1)}$}\label{subsec 4.2}
In this subsection, we specify the quantities $Q_j^{(i+1)}$ used in the previous subsection, and provide some identities which these quantities satisfy. 

For a given positive number $\mathfrak{B}$ defined in the beginning of section $\ref{sec3.2}$, let $n$ be a natural number satisfying $N_1(n)<1$ and $N_2(n,\mathfrak{B})<1.$ Define $Q_1^{(2)}$ and $Q_2^{(2)}$ to be positive numbers derived from the identities 
\begin{equation}\label{Q_1^{(2)} and Q_2^{(2)} identity}
    P^{\mathfrak{B}}(Q_2^{(2)})^{\frac{n-B_2}{2^{d_2-1}(d_2-1)}}=(Q_1^{(2)}/Q_2^{(2)})^{\frac{n-B_1}{2^{d_1-1}(d_1-1)}}=(P/Q_2^{(2)})^{\frac{n-B_1}{2^{d_1-1}}}.
\end{equation}
Write $\theta:=\theta(n,B_1,B_2,d_1,d_2,\mathfrak{B})$ for the quantity with $Q_2^{(2)}=P^\theta$  obtained by the first and last expression in  $(\ref{Q_1^{(2)} and Q_2^{(2)} identity})$. By the definition of $\mathfrak{B}$, one has $\mathfrak{B}\leq d_1$. Thus, we have  $\theta>0,$ since $N_1(n)<1.$ With this $\theta,$ we define $\eta$ to be 
\begin{equation}\label{definitino of eta}
    \eta:=\text{min}(1/8,\theta/2),
\end{equation}
which defines $Q=P^{\eta}$ in (\ref{Q definition}).

Define $Q_1^{(1)}=P^{\tau_1}$ and $Q_2^{(1)}=P^{\tau_2}$ with $\tau_1,\tau_2>0$ satisfying identities
\begin{equation}\label{Q_1^{(1)} and Q_2^{(1)} identity}
   ( Q_2^{(1)})^{\frac{n-B_2}{2^{d_2-1}(d_2-1)}}=(Q_1^{(1)}/Q_2^{(1)})^{\frac{n-B_1}{2^{d_1-1}(d_1-1)}}.
\end{equation}
We take $\tau_1$ and $\tau_2$ to be  as small as possible such that $$\mathfrak{M}_{2,1}(Q_2^{(1)},Q_1^{(1)})\subseteq \mathfrak{M}(Q),$$
with $Q=P^{\eta}.$ It is worth noting that $\tau_1$ and $\tau_2$ depend on $n$, $B_1$, $B_2$, $d_1$, $d_2$, and $\mathfrak{B},$ since $\eta$ depends on these parameters.

A modicum computation with $(\ref{Q_1^{(2)} and Q_2^{(2)} identity})$ reveals that whenever $N_2(n,\mathfrak{B})<1$, one has 
\begin{equation}\label{Bound 1}
    (Q_2^{(2)})^2\cdot (Q_1^{(2)}/Q_2^{(2)})^{-\frac{n-B_1}{2^{d_1-1}(d_1-1)}+\epsilon}\ll P^{-d_1-\delta}
\end{equation}
and 
\begin{equation}\label{Bound 2}
    P^{-\mathfrak{B}}\cdot (Q_2^{(2)})^2\cdot (Q_2^{(2)})^{-\frac{n-B_2}{2^{d_2-1}(d_2-1)}+\epsilon}\ll P^{-d_1-\delta},
\end{equation}
for some $\delta>0.$ In fact,  by the second equality in $(\ref{Q_1^{(2)} and Q_2^{(2)} identity})$, the left hand side of $(\ref{Bound 1})$ is seen to be
\begin{equation}\label{4.26}
     (Q_2^{(2)})^2\cdot (P/Q_2^{(2)})^{-\frac{n-B_1}{2^{d_1-1}}+\epsilon}.
\end{equation}
By the leftmost expression and the rightmost expression in $(\ref{Q_1^{(2)} and Q_2^{(2)} identity})$, one has
\begin{equation}\label{Q_2^{(2)} quantity}
    (Q_2^{(2)})^{\frac{n-B_2}{2^{d_2-1}(d_2-1)}+\frac{n-B_1}{2^{d_1-1}}}=P^{\frac{n-B_1}{2^{d_1-1}}-\mathfrak{B}}.
\end{equation}
By substituting $Q_2^{(2)}$ obtained by $(\ref{Q_2^{(2)} quantity})$ into $(\ref{4.26}),$ the expression $(\ref{4.26})$ is seen to be $O(P^{C+\epsilon})$ with
\begin{equation}
    C=-\frac{n-B_1}{2^{d_1-1}}+\frac{\left(\frac{n-B_1}{2^{d_1-1}}-\mathfrak{B}\right)\left(2+\frac{n-B_1}{2^{d_1-1}}\right)}{\frac{n-B_2}{2^{d_2-1}(d_2-1)}+\frac{n-B_1}{2^{d_1-1}}}.
\end{equation}
Hence, to verify the inequality $(\ref{Bound 1})$, it suffices to show that 
\begin{equation}
    C<-d_1.
\end{equation}
This is equivalent to showing that
\begin{equation}
\begin{aligned}
    &-\frac{n-B_1}{2^{d_1-1}}\left(\frac{n-B_2}{2^{d_2-1}(d_2-1)}+\frac{n-B_1}{2^{d_1-1}}\right)+\left(\frac{n-B_1}{2^{d_1-1}}-\mathfrak{B}\right)\left(2+\frac{n-B_1}{2^{d_1-1}}\right)\\
    &< -d_1\left(\frac{n-B_2}{2^{d_2-1}(d_2-1)}+\frac{n-B_1}{2^{d_1-1}}\right).
\end{aligned}
\end{equation}
This is equivalent to 
\begin{equation}\label{final inequality}
\begin{aligned}
    &d_1\left(\frac{n-B_2}{2^{d_2-1}(d_2-1)}+\frac{n-B_1}{2^{d_1-1}}\right)-2\mathfrak{B}+\frac{2(n-B_1)}{2^{d_1-1}}-\frac{\mathfrak{B}(n-B_1)}{2^{d_1-1}}\\
    &< \frac{n-B_1}{2^{d_1-1}}\left(\frac{n-B_2}{2^{d_2-1}(d_2-1)}\right)
\end{aligned}
\end{equation}
By dividing $\frac{n-B_1}{2^{d_1-1}}\left(\frac{n-B_2}{2^{d_2-1}(d_2-1)}\right)$ on both sides of $(\ref{final inequality})$ and by the definition $(\ref{sisi})$ of $s_i\ (i=1,2)$, the inequality $(\ref{final inequality})$ is equivalent to $N_2(n,\mathfrak{B})<1.$ This concludes that $N_2(n,\mathfrak{B})<1$ yields $(\ref{Bound 1})$. The inequality $(\ref{Bound 2})$ immediately follows by the first equality in $(\ref{Q_1^{(2)} and Q_2^{(2)} identity}).$

Furthermore, for any positive numbers $A,B$ with $\log A \asymp \log B\asymp \log P$ satisfying 
\begin{equation}\label{A and B relation}
    A^{\frac{n-B_2}{2^{d_2-1}(d_2-1)}}=(B/A)^{\frac{n-B_1}{2^{d_1-1}(d_1-1)}},
\end{equation} 
whenever $N_1(n)<1$, one has
\begin{equation}\label{identity for A and B}
    B^2\cdot A\cdot A^{-\frac{n-B_2}{2^{d_2-1}(d_2-1)}+\epsilon}\ll P^{-\delta},
\end{equation}
for some $\delta>0.$ In fact, by $(\ref{A and B relation})$, one has
\begin{equation}\label{identity A and B}
    A^{\frac{n-B_2}{2^{d_2-1}(d_2-1)}+\frac{n-B_1}{2^{d_1-1}(d_1-1)}}=B^{\frac{n-B_1}{2^{d_1-1}(d_1-1)}}.
\end{equation}
By substituting $B$ obtained by $(\ref{identity A and B})$ into  $(\ref{identity for A and B})$, the left hand side of $(\ref{identity for A and B})$ is seen to be $A^{D+\epsilon}$, where
\begin{equation}
    D=1+\frac{2\left(\frac{n-B_2}{2^{d_2-1}(d_2-1)}+\frac{n-B_1}{2^{d_1-1}(d_1-1)}\right)}{\frac{n-B_1}{2^{d_1-1}(d_1-1)}}-\frac{n-B_2}{2^{d_2-1}(d_2-1)}.
\end{equation}
Hence, to verify the inequality (\ref{identity for A and B}), it suffices to show that
\begin{equation}
    D<0,
\end{equation}
since $\log A\asymp \log P.$
This is equivalent to showing that 
\begin{equation}\label{4.38}
\begin{aligned}
    &\frac{n-B_1}{2^{d_1-1}(d_1-1)}+2\left(\frac{n-B_2}{2^{d_2-1}(d_2-1)}+\frac{n-B_1}{2^{d_1-1}(d_1-1)}\right)\\&<\left(\frac{n-B_2}{2^{d_2-1}(d_2-1)}\right)\left(\frac{n-B_1}{2^{d_1-1}(d_1-1)}\right).
\end{aligned}
\end{equation}

In particular, the quantities $Q_1^{(1)}$ and $Q_2^{(1)}$ satisfies $(\ref{identity for A and B})$ with $A$ and $B$ replaced by $Q_2^{(1)}$ and $\ Q_1^{(1)}$, since $Q_1^{(1)}$ and $Q_2^{(1)}$ satisfies $(\ref{Q_1^{(1)} and Q_2^{(1)} identity})$. By dividing $\left(\frac{n-B_2}{2^{d_2-1}(d_2-1)}\right)\left(\frac{n-B_1}{2^{d_1-1}(d_1-1)}\right)$ on both sides of $(\ref{4.38})$ and by the definition $(\ref{sisi})$ of $s_i\ (i=1,2)$,  the inequality $(\ref{identity for A and B})$ is equivalent to $N_1(n)<1$. Hence, this concludes that $N_1(n)<1$ yields $(\ref{identity for A and B})$.

\bigskip







\section{Proof of main theorems}\label{sec4}
As we promised in Remark \ref{remark2}, we provide a general result which delivers an improvement over the previous result in $[\ref{ref9}]$  for all $d_2>d_1
\geq 2$ with the exception of case  $d_2=d_1+1.$ In advance of the statement, we recall the definition of $N_1(n)$ and $N_2(n,\mathfrak{B})$ introduced in Section \ref{subsec3.4}, for a given $\mathfrak{B}$ in  $(\ref{definition of B}).$
\begin{te}\label{thm5.1}
    Let $d_1$ and $d_2$ be natural numbers with $d_2>d_1+1$ with $d_1\geq 2.$  Suppose that $F_1$ and $F_2$ be forms in $n$ variables of degree $d_1$ and $d_2,$ respectively. Let $\mathfrak{B}$ be a positive number given in  $(\ref{definition of B}).$ Then, whenever $N_1(n)<1$ and $N_2(n,\mathfrak{B})<1$, one has the asymptotic formula $(\ref{expected asymptotic formula}).$
\end{te}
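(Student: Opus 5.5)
We split $N(\boldsymbol{F};P)$ via (\ref{N(F)}) into the contributions of $\mathfrak{M}(Q)$ and $\mathfrak{m}(Q)$, with $Q=P^{\eta}$ and $\eta$ fixed in (\ref{definitino of eta}).

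\emph{Minor arcs.} This is exactly Lemma \ref{main lemma for minor arcs estimate}. Under $N_1(n)<1$ and $N_2(n,\mathfrak{B})<1$ it gives
\[
\int_{\mathfrak{m}(Q)}S(\boldsymbol{\alpha})\,d\boldsymbol{\alpha}\ll P^{-d_1-d_2-\delta}.
\]
After multiplying by $(2P+1)^n$, the minor arcs contribute $O(P^{n-d_1-d_2-\delta})$.

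\emph{Major arcs.} Since $Q=P^{\eta}$ with $\eta\le 1/8$ is a small power of $P$, we follow the standard treatment of Browning and Heath-Brown [\ref{ref9}, Sections 6--7]. For $\boldsymbol{\alpha}\in\mathfrak{M}(q,\boldsymbol{a},Q)$, write $\alpha_i=a_i/q+\beta_i$ and split $\x$ into residue classes modulo $q$. Euler--Maclaurin summation then yields
\[
(2P+1)^nS(\boldsymbol{\alpha})=q^{-n}S_q(\boldsymbol{a})\,P^nI(P^{d_1}\beta_1,P^{d_2}\beta_2)+O\bigl(qP^{n-1}(1+|\beta_1|P^{d_1}+|\beta_2|P^{d_2})\bigr),
\]
where $S_q(\boldsymbol{a})$ is the complete exponential sum and $I$ is the singular integral $J$ rescaled. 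Integrating the error over $\mathfrak{M}(Q)$, whose measure is $\ll Q^{5}P^{-d_1-d_2}$, costs $O(P^{n-d_1-d_2-1}Q^{C})$. This is acceptable for small $\eta$.

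To complete the singular series and integral we need tail bounds. First, $q^{-n}S_q(\boldsymbol{a})\ll q^{-(n-B^*)/(2^{d_2-1}(d_2-1))+\epsilon}$ or better. This follows from Lemmas \ref{lem4242} and \ref{lem4.3}, or equivalently from [\ref{ref9}, Lemma 7.1], applied with $P=q$. Second, $J(\boldsymbol{\gamma})\ll \min(1,|\boldsymbol{\gamma}|^{-\kappa})$ for some $\kappa>2$. The hypothesis $N_1(n)<1$ forces $(n-B_i)/(2^{d_i-1}(d_i-1))>2$ for each $i$, which makes both the series $\sum_q\sum_{\boldsymbol{a}}q^{-n}S_q(\boldsymbol{a})$ and the integral $\int_{\R^2}J$ converge absolutely, with tails of size $Q^{-\delta'}$.

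\emph{Assembly and local densities.} Multiplicativity of $S_q(\boldsymbol{a})$ identifies the singular series with $\prod_p\sigma_p$ in the usual way. The main term is then $\sigma_\infty\prod_p\sigma_p P^{n-d_1-d_2}$ with error $O(P^{n-d_1-d_2-\delta})$, which is (\ref{expected asymptotic formula}).

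The step I expect to be the main obstacle is verifying that the major-arc tail estimates really require no more than $N_1(n)<1$. The degrees differ, so the boxes for $\beta_1$ and $\beta_2$ have different scales, and the decay of $S_q$ must be controlled jointly in $a_1$ and $a_2$. I would try first to handle this by importing [\ref{ref9}, Lemmas 7.1--7.3] verbatim, checking only that their hypotheses on $n-B^*$ are implied by $N_1(n)<1$.
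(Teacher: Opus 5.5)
Your decomposition is the paper's: the minor arcs are exactly Lemma \ref{main lemma for minor arcs estimate}. The paper handles the major arcs in one line by citing [\ref{ref9}, Lemma 8.1] with $\varpi=\eta$, which is valid whenever $N_1(n)<1$. So your fallback of importing Browning--Heath-Brown verbatim is precisely the paper's proof.

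The tail estimates in your major-arc sketch, however, are not correct as stated, and you should not rely on them. Write $\sigma_i=(n-B_i)/(2^{d_i-1}(d_i-1))$, and let $q_2$ be the reduced denominator of $a_2/q$.

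\emph{Complete sums.} Lemma \ref{lem4242} with $P=q$ only sees $\alpha_2=a_2/q$, so it gives decay $q_2^{-\sigma_2}$. Lemma \ref{lem4.3}, using the exact approximation of $\alpha_2$ with denominator $q_2$ and $\gcd(a_1,q/q_2)=1$, gives decay $(q/q_2)^{-\sigma_1}$. What these lemmas actually yield is therefore
\[
q^{-n}|S_q(\boldsymbol{a})|\ll q^{\epsilon}\min\bigl\{q_2^{-\sigma_2},(q/q_2)^{-\sigma_1}\bigr\}.
\]
For intermediate $q_2$ (say $q_2\approx q^{1/2}$) this is weaker than the uniform bound $q^{-(n-B^*)/(2^{d_2-1}(d_2-1))}$ you state.

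\emph{Convergence criterion.} ``$\sigma_i>2$ for each $i$'' is not the right test. There are $\asymp q^2$ residues $\boldsymbol{a}$, so even your claimed uniform bound would need an exponent exceeding $3$. The correct argument is as follows. For each $q_2\mid q$ there are at most $qq_2$ admissible $\boldsymbol{a}$. Write $q=q_2m$ and use $\min\{x,y\}\le x^{\theta}y^{1-\theta}$. This reduces the singular series to
\[
\sum_{q_2,m}q_2^{2-\theta\sigma_2+\epsilon}m^{1-(1-\theta)\sigma_1+\epsilon},
\]
which converges for a suitable $\theta\in(0,1)$ precisely when $3/\sigma_2+2/\sigma_1<1$. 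That is exactly $N_1(n)<1$.

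\emph{Singular integral.} $J(\boldsymbol{\gamma})$ needs the same joint treatment. The analogous argument gives decay
\[
\min\bigl\{1,\,|\gamma_2|^{-\sigma_2},\,(|\gamma_1|/\max\{1,|\gamma_2|\})^{-\sigma_1}\bigr\},
\]
hence only $\kappa=\sigma_1\sigma_2/(\sigma_1+\sigma_2)$. This exceeds $2$ under $N_1(n)<1$, but not merely when each $\sigma_i>2$; for example, $\sigma_1=\sigma_2=3$ gives only $\kappa=3/2$.

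So the obstacle you flagged is genuine. It is resolved exactly by the hypothesis $N_1(n)<1$, which is why that is the condition attached to the major arcs.
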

\begin{proof}
By [\ref{ref9}, Lemma 8.1] with $\varpi=\eta$, we find that whenever $N_1(n)<1$, one has
\begin{equation*}
    (2P+1)^n\dint_{\mathfrak{M}(Q)}S(\boldsymbol{\alpha})d\boldsymbol{\alpha}=\sigma_{\infty}\left(\prod_p\sigma_p\right)P^{n-d_1-d_2}+O(P^{n-d_1-d_2-\delta}).
\end{equation*}
Therefore, by applying Lemma \ref{main lemma for minor arcs estimate} together with [\ref{ref9}, Lemma 8.1], it follows from $(\ref{N(F)})$ that whenever $N_1(n)<1$ and $N_2(n,\mathfrak{B})<1$, one has
    \begin{equation*}
    \begin{aligned}
        N(\boldsymbol{F};P)&=(2P+1)^n\left(\dint_{\mathfrak{M}(Q)}S(\boldsymbol{\alpha})d\boldsymbol{\alpha}+\dint_{\mathfrak{m}(Q)}S(\boldsymbol{\alpha})d\boldsymbol{\alpha}\right)\\
&=\sigma_{\infty}\left(\prod_p\sigma_p\right)P^{n-d_1-d_2}+O(P^{n-d_1-d_2-\delta}).
    \end{aligned}
    \end{equation*}
    This completes the proof of Theorem $\ref{thm5.1}.$
\end{proof}

\begin{rmk}\label{Remark 7}
    Recall the definition of $\mathcal{N}_{d_2-2}(P)$, that is
\begin{equation}\label{definition N_{d_2-2}(P)}
   \mathcal{N}_{d_2-2}(P)=\prod_{i=1}^{d_2-2}(N_i^{\dagger}(P))^{2^{-i-1}}. 
\end{equation}
By the definition $(\ref{N_i quantity definition})$ of $N_i^{\dagger}(P)$, one has $N_i^{\dagger}(P)=1$ for $i<d_1.$ Furthermore, by applying the circle method, one easily sees that there exists $\delta>0$ such that 
$$N_i^{\dagger}(P)\ll P^{-\delta},$$
for all $i\geq d_1.$ Hence, whenever $d_2-2\geq d_1,$ we find by $(\ref{definition N_{d_2-2}(P)})$ that 
\begin{equation*}
     \mathcal{N}_{d_2-2}(P)\ll P^{-\mathfrak{B}},
\end{equation*}
for some $\mathfrak{B}>0.$ Then, on noting that Theorem $\ref{thm5.1}$ with $\mathfrak{B}>0$ is superior to [$\ref{ref9}$, Theorem 1.2] with two forms of different degrees, we conclude that whenever $d_2-2\geq d_1$,  Theorem $\ref{thm5.1}$ provides an improvement on $[\ref{ref9}]$ with two forms of different degrees. We do not put our effort into optimizing this general result.
 
\end{rmk}

\bigskip

\begin{proof}[Proof of Theorem~\ref{thm1.1}]
We verify all numerical conditions for every $d_1\geq2$
and $d_2\geq5d_1$.
First put
\[
A(r,s)
=\frac{\binom{s-2}{r}(r-1)2^r}
       {3(s-1)2^{s-1}}
\qquad(r\geq2,\ s\geq5r).
\]
We claim that
\begin{equation}\label{zmffpdla}
A(d_1,d_2)<1.
\end{equation}
For fixed $r$, the ratio of successive terms is
\begin{equation}\label{second claim}
\frac{A(r,s+1)}{A(r,s)}
=\frac{(s-1)^2}{2s(s-r-1)}<1,
\end{equation}
since
\[
2s(s-r-1)-(s-1)^2=s(s-2r)-1>0
\]
for $s\geq5r$.

The binomial theorem gives
\[
\binom{5r}{r}\left(\frac14\right)^r
\leq\left(1+\frac14\right)^{5r},
\]
and hence
\[
\binom{5r-2}{r}
\leq\binom{5r}{r}
\leq\left(\frac{5^5}{4^4}\right)^r.
\]
It follows that
\begin{equation}\label{eaq}
\begin{aligned}
A(r,s)
\leq A(r,5r)
&\leq\frac{2(r-1)}{3(5r-1)}
       \left(\frac{3125}{4096}\right)^r
<\frac{2}{15}
       \left(\frac{3125}{4096}\right)^r
<1.
\end{aligned}
\end{equation}
This proves (\ref{zmffpdla}) for every $r\geq2$.

The hypothesis on $n$ therefore implies
\[
\begin{aligned}
n-B_1
\geq n-B^*
&>3(d_2-1)2^{d_2-1}+(d_1-1)2^{d_1}\\
&>\binom{d_2-2}{d_1}(d_1-1)2^{d_1}\\
&\geq R_i(d_1-1)2^{d_1}
\qquad(d_1\leq i\leq d_2-2),
\end{aligned}
\]
where $R_i=\binom{i}{d_1}$.
Thus Lemma~\ref{the first lemma in subsection 3.2} yields
\begin{equation}\label{rjdmlek Rmx}
\mathcal N_{d_2-2}(P)
\ll P^{-\mathfrak D+\epsilon}.
\end{equation}
By Remark~\ref{remark 5}, one has
$\mathfrak D>d_1-1$ throughout the stated range.
Choose
\[
0<\epsilon<\mathfrak D-(d_1-1)
\]
in (\ref{rjdmlek Rmx}).
Then
\[
\mathcal N_{d_2-2}(P)\ll P^{-(d_1-1)}.
\]
We may consequently take $\mathfrak B=d_1-1$
in Theorem~\ref{thm5.1}.

It remains to verify the conditions on $N_1$ and $N_2$.
Write
\[
u=\frac{n-B_2}{(d_2-1)2^{d_2-1}},
\qquad
w=\frac{n-B_1}{2^{d_1-1}}.
\]
The definitions in Section~\ref{subsec3.4} give
\[
\begin{aligned}
N_1(n)
&=\frac{2(d_1-1)}{w}+\frac{3}{u}\leq
\frac{(d_1-1)2^{d_1}
      +3(d_2-1)2^{d_2-1}}
     {n-B^*}
<1.
\end{aligned}
\]
Moreover, one has
\begin{equation}\label{n_2(n,d_1-1)}
\begin{aligned}
N_2(n,d_1-1)
&=\frac{d_1}{w}+\frac{3}{u}
  -\frac{2(d_1-1)}{uw}\\
&\leq\frac{d_1}{w}+\frac{3}{u}\\
&\leq\frac{2(d_1-1)}{w}+\frac{3}{u}\\
&=N_1(n)<1,
\end{aligned}
\end{equation}
where we used $d_1\leq2(d_1-1)$,
which holds for every $d_1\geq2$.




\end{proof}

\end{document}